\title[Remarks on symmetric fusion categories of low rank]{Remarks on symmetric fusion categories of low rank in positive characteristic}
\address{}
\email{}
\urladdr{}
\newtheoremstyle{defstyle}
{0.5cm}                   
{0.5cm}                   
{\normalfont}           
{}     
{\normalfont\bfseries}  
{:}                     
{0.3cm}              
{\thmname{#1}\thmnumber{ #2}\thmnote{ (#3)}}
\numberwithin{equation}{section}
\DeclareRobustCommand{\SkipTocEntry}[5]{}
\newtheorem*{rep@theorem}{\rep@title}
\newcommand{\newreptheorem}[2]{%
	\newenvironment{rep#1}[1]{%
		\def\rep@title{#2 \ref{##1}}%
		\begin{rep@theorem}}%
		{\end{rep@theorem}}}
\newtheorem{theorem}{Theorem}[section]
\newtheorem{proposition}[theorem]{Proposition}
\newtheorem{corollary}[theorem]{Corollary}
\newtheorem{lemma}[theorem]{Lemma}
\newtheorem{theorem*}{Theorem}
\theoremstyle{definition}
\newtheorem{example}[theorem]{Example}
\newtheorem{remark}[theorem]{Remark}
\newtheorem{question}[theorem]{Question}
\newcommand\pf{\begin{proof}}
\newcommand\epf{\end{proof}}
\newcommand\Ver{\operatorname{Ver}}
\newcommand\vect{\operatorname{Vec}}
\newcommand\End{\operatorname{End}}
\newcommand\Aut{\operatorname{Aut}}
\newcommand\Rep{\operatorname{Rep}}
\newcommand\FPdim{\operatorname{FPdim}}
\newcommand\Id{\operatorname{Id}}
\newcommand\coev{\operatorname{coev}}	
\newcommand\ev{\operatorname{ev}}
\newcommand\id{\operatorname{id}}
\newcommand\rank{\operatorname{rank}}
\def\l@subsection{\@tocline{2}{0pt}{2pc}{5pc}{}}
\def\l@subsubsection{\@tocline{2}{0pt}{2.5pc}{5pc}{}}
\begin{document}

\author[A. Czenky]{Agustina Czenky}
\address{Department of Mathematics, University of Oregon, Eugene, OR 97403, USA}
\email{aczenky@uoregon.edu}

\begin{abstract}
	We give lower bounds for the rank of a symmetric fusion category in characteristic $p\geq 5$ in terms of $p$. We prove that the second Adams operation $\psi_2$ is not the identity for any non-trivial symmetric fusion category, and that symmetric fusion categories satisfying $\psi_2^a=\psi_2^{a-1}$ for some positive integer $a$ are super-Tannakian. As an application, we classify all symmetric fusion categories of rank 3 and those of rank 4 with exactly two self-dual simple objects.
\end{abstract}
\maketitle
\tableofcontents
\section{Introduction}
Fix an algebraically closed field \textbf{k} of characteristic $p \geq 0$. A \emph{symmetric fusion category} $\mathcal C$ over $\textbf{k}$ is a fusion category endowed with a braiding $c_{X,Y}: X\otimes Y \to Y \otimes X$ such that $c_{Y,X}c_{X,Y}=\id_{X\otimes Y} $ for all $X, Y \in  \mathcal{C}$,   
see \cite[Definition 8.1.12]{EGNO}. A well-known theorem by Deligne \cite{D} implies that a symmetric fusion category in characteristic $0$ is \emph{super-Tannakian}, that is, admits a symmetric tensor functor to the category $\operatorname{sVec}$ of super vector spaces. As a consequence of this theorem, a symmetric fusion category over a field of characteristic $0$ is equivalent to the category $\Rep_{\textbf k}(G ,z)$ of finite-dimensional representations of a finite group $G$. Here $z \in G$ is a central element of order $ 2$ that modifies the braiding, see  \cite[Section 8.19]{D}. This results gives a classification of symmetric fusion categories in characteristic zero in terms of group data.

Examples of symmetric fusion categories in positive characteristic are the Verlinde categories $\Ver_p$, defined as the semisimplification of the category of finite-dimensional $\textbf{k}$-representations of the cyclic group $\mathbb Z_p$ for $p$ a positive prime, see \cite[Section 3.2]{O2}. For $p\geq 5$, these categories have no fiber functor to $\operatorname{Vec}$ or $\operatorname{sVec}$, hence cannot be obtained as the category of representations of a finite group, see \cite{BEO}.

An important result by Victor Ostrik in \cite{O2} gives a new version of Deligne's theorem for the case of symmetric fusion categories in positive characteristic. He proved that any symmetric fusion category $\mathcal C$ in characteristic $p > 0$ admits a Verlinde fiber functor, that is, a $\textbf k$-linear exact symmetric tensor functor
 \begin{align*}
 	F:\mathcal C \to \Ver_p.
 \end{align*}
 As a consequence, any \textbf{k}-linear symmetric fusion category is equivalent to the category $\Rep_{\Ver_p}(G, \epsilon)$ of representations of some finite group scheme $G$ in $\Ver_p$ \cite[Corollary 1.6]{O2}. However, this statement does not give an explicit classification for $p\geq 5$, since  the classification of finite group schemes $G$ in $\Ver_p$ such that $\Rep_{\Ver_p}(G, \epsilon)$ is semisimple is not known, even when $\epsilon$ is trivial.

 When $p>0$, Nagata \cite[IV, 3.6]{DG} and Masuoka \cite{M} give a classification of finite group schemes $G$ in $\operatorname{Vec}$ and $\operatorname{sVec}$, respectively, such that $\Rep_{\textbf k}(G)$ is semisimple. This yields a reasonable classification of symmetric fusion categories in the super-Tannakian case. Note that when $\operatorname{char}(\mathbf{k})=2$ or $3$, symmetric fusion categories over \textbf{k} are Tannakian and super-Tannakian, respectively, so we know their classification.

 In this paper we will focus on the non super-Tannakian case. We will approach the classification of symmetric fusion categories in positive characteristic by rank, i.e., by the number of simple objects. Here is our first result.
 
 \begin{reptheorem}{rango p-1/2} Let $p\geq 5$. If $\mathcal C$ is a non super-Tannakian symmetric fusion category, then $$\rank(\mathcal C)\geq \frac{p-1}{2}.$$
 \end{reptheorem}
 
We note that the statement does not hold for super-Tannakian categories. For example, for $p\geq 3$ the category $\Rep(\mathbb Z_2)$ is semisimple and has rank $2$ which is strictly less than $\frac{p-1}{2}$ for $p>5$.

Note that equality in Theorem \ref{rango p-1/2} is achieved by $\Ver_p^+$, the fusion subcategory  of $\Ver_p$ generated by simple objects of odd index, see Section \ref{section: verlinde}. In characteristic $5$, it is known that the equality is only achieved by  $\Ver_5^+$, see \cite[ 4.6]{EOV}.

\begin{question}\label{question 1}
	Let $p>5$ and $\mathcal C$ a symmetric fusion category of rank $\frac{p-1}{2}$. If $\mathcal C$ is not super-Tannakian, is it true that $\mathcal C \cong \Ver_p^+$?
\end{question}

We give a positive answer for Question \ref{question 1} for the case $p=7$ in Theorem \ref{theo: rank 3}.

We also know that there exist non super-Tannakian symmetric fusion categories of rank $\frac{p+3}{2}$. In fact, let $\delta \in \textbf k$ and consider the Karoubian envelope $\underline{\operatorname{Rep}}(O(\delta))$ of the \emph{Brauer category} as defined in \cite[Section 9.3]{D}. Let $\underline{\operatorname{Rep}}^{\text ss }(O(\delta))$ denote the semisimplification of $\underline{\operatorname{Rep}}(O(\delta))$, i.e., the quotient of $\underline{\operatorname{Rep}}(O(\delta))$ by the tensor ideal of negligible morphisms, see e.g. \cite[Section 6.1]{D}. It turns out that when $\delta=-1$ this category contains a symmetric subcategory equivalent to $\Rep(\mathbb Z_2)$. The symmetric fusion category obtained by de-equivariantization by $\mathbb Z_2$ of the neutral component of the standard $\mathbb Z_2$-grading of  $\underline{\operatorname{Rep}}^{\text{ss}}(O(-1))$ has rank $\frac{p+3}{2}$, see  \cite{O3} for details.

We thus have examples of  non-super-Tannakian symmetric fusion categories in ranks $\frac{p-1}{2}$ and $\frac{p+3}{2}$. A natural question follows.
\begin{question}
	Are there non super-Tannakian symmetric fusion categories of rank $\frac{p+1}{2}$?
\end{question}

For $p\geq 5$, the category $\Ver_p$ has precisely four fusion subcategories: $\text{Vec}, \text{sVec}, \Ver_p^+$ and $\Ver_p$, see \cite[Proposition 3.3]{O2}. Thus, if $\mathcal C$ is not super-Tannakian, its Verlinde functor $F:\mathcal C \to \Ver_p$ is either surjective or its image is $\Ver_p^+$.  Our next result gives an improvement on the bound for the former case. 
 
 \begin{reptheorem}{theo: p-1} Let $p\geq 5$ and
 	let $\mathcal C$ be a symmetric fusion category with Verlinde fiber functor $F\colon \mathcal C \to \Ver_p$.  If $F$ is surjective then $$\rank(\mathcal C)\geq p-1.$$
 \end{reptheorem}

The main tool in the proof of Theorems \ref{rango p-1/2} and \ref{theo: p-1} is Galois theory. 

Another useful tool for the classification of symmetric fusion categories in positive characteristic is the \emph{second Adams operation}. Let $p\ne 2$. For a symmetric fusion category $\mathcal C $ with Grothendieck ring $\mathcal{K(C)}$, the second Adams operation is the ring endomorphism  $\psi_2:\mathcal{K(C)}\to \mathcal{K(C)}$ given by  $$\psi_2(X)=S^2(X)-\Lambda^2(X),$$ for all $X$ in $\mathcal C$, see \cite{EOV}.

	\begin{reptheorem}{remark on powers of psi}
	Let $p>2$ and let $\mathcal C$ be a non-super-Tannakian symmetric fusion category. If  the Adams operation $\psi_2: \mathcal{K(C)}\to \mathcal{K(C)}$ satisfies $\psi_2^a=\psi_2^b$ for some $a, b\in \mathbb Z_{\geq 0}$, then $2^a\equiv \pm 2^b \mod p$.
\end{reptheorem}
 
 \begin{repcorollary}{tannakian}
 Let $p> 2$ and let $\mathcal C$ be a symmetric fusion category. If $\psi_2^a=\psi_2^{a-1}$ for some $a\geq 1$, then $\mathcal C$ is super-Tannakian.
 \end{repcorollary} 

The following comes as a consequence.
 
 \begin{reptheorem}{psi not identity}
 	Let $p\ne 2$. If $\mathcal C$ is a non-trivial symmetric fusion category  then $\psi_2$ is not the identity. 
 \end{reptheorem}

We apply the second Adams operation to the problem of classification of symmetric fusion categories of low rank in positive characteristic. In \cite{EOV}, the second Adams operation was employed to give a complete clasification for rank 2. We classify symmetric fusion categories of rank $3$, and symmetric fusion categories of rank $4$ with exactly two self-dual simple objects, see Theorems \ref{theo: rank 3} and \ref{rank 4}, respectively. We also note that by Theorem \ref{rango p-1/2} non super-Tannakian symmetric fusion categories of rank $4$ are only possible in characteristic $p=5$ or $7$. 
 
Even though our results show that the second Adams operation is non-trivial for non-trivial symmetric fusion categories, we note that it is useful for the classification problem but definitely not sufficient on its own, see Remark \ref{Remark on Adams operation}.

 \medbreak
 This paper is organized as follows. A brief introduction to Verlinde categories and the second Adams operation is given in Section \ref{section:preliminaries}. Proofs for Theorems \ref{rango p-1/2} and \ref{theo: p-1} are given in Section \ref{section: bounds}. In Section \ref{section: Adams operation} we study properties of the second Adams operation, give a formula for it in the Verlinde category, and prove that it is not the identity for non-trivial fusion categories. Sections \ref{section: rank 3}  and \ref{section: rank 4} describe the classification of fusion categories of rank 3 and those of rank 4 with exactly two self-dual simple objects, respectively. We also say some words about the classification of those of rank 4 where all simple objects are self-dual in Section \ref{section: rank 4}. 
 
 \addtocontents{toc}{\SkipTocEntry}
 \subsection*{Acknowledgments}
 I am deeply grateful to my advisor Victor Ostrik for suggesting this project, providing insightful advice in how to approach the proofs of the main results, and for his numerous comments that made the exposition of this work much clearer. I also thank the referees for carefully reading this work and for their many helpful suggestions.

\section{Preliminaries}\label{section:preliminaries}
Throughout this paper \textbf{k} will denote an algebraically closed field of characteristic $p\geq 0$.

For a ring $R$, we denote by $R_{\mathbb Q}$ the scalar extension $R \otimes_{\mathbb Z} \mathbb Q$. If $z$ is a complex number, we denote by $\mathbb Q(z)$ the field extension generated by $z$ over $\mathbb Q$, and by $[\mathbb Q(z):\mathbb Q]$ the degree of said extension.

\subsection{Fusion categories}
In this section, we recall some useful definitions regarding fusion categories.

A \emph{tensor category} $\mathcal C$ is a $\textbf{k}$-linear abelian rigid monoidal category, with finite-dimensional Hom-spaces, and such that $\End_{\mathcal C}(\textbf 1)\cong \textbf k$, see e.g. \cite{SR} or \cite{EGNO}. We denote its tensor product  functor by $\otimes : \mathcal C \times \mathcal C\to \mathcal C$.


A \emph{fusion category} $\mathcal C$  is a tensor category which is semisimple with a finite number of isomorphism classes of simple objects. A \emph{fusion subcategory} of a fusion category $\mathcal C$ is a full tensor subcategory $\mathcal C' \subset \mathcal C$ such that if $X \in \mathcal C$ is isomorphic to a direct summand of an object of $\mathcal C'$, then $X \in \mathcal C'$, see \cite[2.1]{DGNO}. 

For fusion categories $\mathcal C$ and $\mathcal D$, a \emph{tensor functor} $F:\mathcal C\to \mathcal D$ is a $\textbf{k}$-linear exact and faithful monoidal functor, see \cite[Definition 4.2.5]{EGNO}. For a tensor functor $F:\mathcal C \to \mathcal D$, its image $ F(\mathcal C)$ is the fusion subcategory of $\mathcal D$ generated by objects $F(X)$, $X \in \mathcal C$. The functor $F$ is called \emph{injective} if it is fully faithful, and \emph{surjective} if $F(\mathcal C)=\mathcal D$, see \cite[5.7]{ENO}. Thus a tensor functor is an equivalence if and only if it is both injective and surjective.

For two fusion categories $\mathcal C$ and $\mathcal D$, we can define their \emph{external tensor product}, see \cite[Section 2.2]{O2}, which we will denote by $\mathcal C\boxtimes \mathcal D$.

\subsubsection{Frobenius-Perron dimension} Let $\mathcal C$ be a fusion category. We will denote by $\mathcal{K(C)}$ its Grothendieck ring, see e.g. \cite[4.5]{EGNO}. For an object $X$ in $\mathcal C$ we will use the same notation for its class $X$  in $\mathcal{K(C)}$. We recall that there is a unique ring homomorphism $\FPdim : \mathcal{K(C)} \to \mathbb R$ called \emph{Frobenius-Perron dimension} such that $\FPdim(X) \geq 1$ for
any object $X \ne 0$, see \cite[Proposition 3.3.4]{EGNO}. The \emph{Frobenius-Perron dimension} $\FPdim(\mathcal C)$ of $\mathcal C$ is defined as
\begin{align*}
	\FPdim(\mathcal C) = \sum\limits_{X } \FPdim(X)^2,
\end{align*}
where $X$ runs over a set of representatives of isomorphism classes of simple objects. We say that $\mathcal C$ is \emph{weakly integral} if $\FPdim(\mathcal C)$ is an integer, and \emph{integral} if $\FPdim(X)$ is an integer for all simple objects $X$. 

\subsection{Symmetric fusion categories}

Let $\mathcal{C}$ be a braided fusion category and denote by $c_{X,Y}$ the braiding morphism $X\otimes Y \to Y \otimes X$. We say $\mathcal C$ is \emph{symmetric} if 
\begin{align*}
	c_{Y,X}c_{X,Y}=\id_{X\otimes Y} \ \ \text{for all}\ X \in \ \mathcal{C}, 
\end{align*}  
see \cite{ EGNO}.
A \emph{symmetric tensor functor} between symmetric tensor categories is a tensor functor compatible with the commutativity isomorphism. 

We denote by Vec (respectively sVec) the symmetric fusion category of finite-dimensional vector spaces (respectively super vector spaces) over $\textbf{k}$. 

We say a symmetric fusion category $\mathcal C$ is \emph{Tannakian} (resp., \emph{super-Tannakian}) if it admits a symmetric fiber functor, that is, a symmetric tensor functor $\mathcal C \to \operatorname{Vec}$ (resp., $\mathcal C\to \operatorname{sVec}$), see \cite{SR, DM, D}. 

\subsubsection{The second Adams operation}\label{subsection:Adams operation}
Let $\mathcal C$ be a symmetric fusion category over a field of characteristic $p\ne 2$. 

We recall the definition of the second symmetric and exterior powers of an object $X\in \mathcal C$, following \cite[Definition 9.9.5]{EGNO} and \cite[2.1]{EHO}. Consider the action of the braid group $B_2$ on $X^{\otimes 2}$ in $\mathcal C$, see \cite[Remark 8.2.5]{EGNO}. This action factors through the symmetric group $S_2$. The \emph{second symmetric power} $S^2(X )$ of $X$ is the maximal quotient of $X^{\otimes 2}$ on which the action of $S_2$ is trivial. The \emph{second exterior power} $\Lambda^2(X)$ of $X$ is the maximal quotient of $X^{\otimes 2}$ on which the action of $S_2$ factors through the sign representation.

The \emph{second Adams operation} $\psi_2:\mathcal{K(C)}\to \mathcal{K(C)}$ is defined by 
\begin{align*}
	\psi_2(X)=S^2(X)-\Lambda^2(X),
\end{align*}
for all $X\in \mathcal{K(C)}.$  This defines a ring endomorphism of $\mathcal{K(C)}$, see \cite[Lemma 4.4]{EOV}.

Since $X^2=S^2(X)+\Lambda^2(X)$ for all $X\in \mathcal C$, then 
\begin{align*}
	X^2\equiv \psi_2(X) \mod 2 \ \ \text{for all } X\in \mathcal{K(C)}.
\end{align*}
We will use this fact repeatedly throughout this work. We also have that $\psi_2$ commutes with duality, that is, $\psi_2(X)^*=\psi_2(X^*)$ for all $X\in \mathcal{K(C)}$. 

When studying properties of the second Adams operation, we will often  look at its scalar extension $$(\psi_2)_{\mathbb Q} :=\psi_2\otimes 1: \mathcal{K(C)}_{\mathbb Q} \to \mathcal{K(C)}_{\mathbb Q}.$$

\subsection{Non-degenerate fusion categories}\label{section:non-deg}
Let $\mathcal C$ be a fusion category. Recall that a \emph{pivotal structure} on $\mathcal C$ is a tensor isomorphism $X\cong X^{**}$ for any $X\in \mathcal C$, see \cite{BW, EGNO}. Associated to a pivotal structure we can define the left and right trace of a morphism $X\to X$, see e.g. \cite[4.7]{EGNO}. The pivotal structure is called \emph{spherical} if for any such morphism its right trace equals its left trace. A \emph{spherical fusion category} is a fusion category equipped with a spherical structure. In the case when $\mathcal C$ is symmetric, there is a canonical choice of spherical structure given by 
	\begin{equation*}
	\begin{tikzcd}[column sep=huge,row sep=huge]
		X  \arrow{r}{\Id_X\otimes \coev_{X^*}} &X\otimes X^*\otimes X^{**} \arrow{r}{c_{X,X^*}\otimes \id_{X^{**}}} 
		&X^{*}\otimes X \otimes X^{**} \arrow{r}{\ev_{X}\otimes \Id_{X^{**}}} &X^{**},
	\end{tikzcd}
\end{equation*}
see e.g. \cite[Secion 9.9]{EGNO}.

Let $\mathcal C$ be a spherical fusion category. We recall the definition of \emph{dimension} $\dim(X)\in \textbf{k}$ of an object $X$ as the trace of its identity morphism. This determines a ring homomorphism $\dim: \mathcal{K(C)} \to \textbf{k}$ sending $X$ to $\dim(X)$. By \cite[Proposition 4.8.4]{EGNO} if $X\in\mathcal O(C) $ then $\dim(X)\ne 0$. 

The \emph{global dimension} $\dim(\mathcal C)\in \textbf{k}$ of a spherical fusion category $\mathcal C$ is defined as
\begin{align*}
	\dim(\mathcal C)=\sum\limits_{X\in \mathcal O(C)} \dim(X)^2.
\end{align*}
We say $\mathcal C$ is \emph{non-degenerate} if $\dim(\mathcal C)\ne 0$, see \cite[Definition 9.1]{ENO}. A crucial property of non-degenerate fusion categories is that they can be lifted to characteristic zero, see \cite{E} and \cite[Section 9]{ENO}. It is known that for $p=0$ any fusion category is non-degenerate, see \cite[Theorem 2.3]{ENO}.

\subsection{Verlinde categories}\label{section: verlinde} Let $p>0$. Let $\mathbb Z_p$ be the cyclic group of $p$ elements with generator $\sigma$. We have an isomorphism of algebras $\textbf{k}[\mathbb Z_p] = \textbf{k}[\sigma]/(\sigma^p-1)= \textbf{k}[\sigma]/(\sigma - 1)^p$. Thus isomorphism classes of indecomposable objects in the category $\Rep_{\textbf k}(\mathbb Z_p)$ are given by the $\mathbb Z_p$-modules $\tilde L_s:=\textbf{k}[\sigma]/(1 - \sigma)^s$,  for $s \in \mathbb Z$ satisfying $1 \leq s \leq p$.

The Verlinde category $\Ver_p$ is the symmetric fusion category over \textbf{k} obtained by quotienting $\Rep_{\textbf k}(\mathbb Z_p)$  by the tensor ideal of negligible morphisms, see \cite{O2} for details. Simple objects of this category are precisely the images of the indecomposable objects $\tilde L_s$ for $s=1, \dots, p-1$. We denote them by $\textbf{1}=L_1, L_2, \dots, L_{p-1}$. The Verlinde fusion rules are given by 
\begin{align*}
	L_r\otimes L_s = \sum\limits_{i=1}^{\min(r,s,p-r,p-s)} L_{|r-s|+2i-1}.
\end{align*}

We denote by $\Ver_p^+$  the abelian subcategory of $\Ver_p$ generated by $L_i$ for $i$ odd. By the Verlinde fusion rules, it turns out that $\Ver_p^+$ is a fusion subcategory of $\Ver_p$. For $p>2$ the fusion subcategory generated by $L_1$ and $L_{p-1}$ is tensor equivalent to $\text{sVec}$.  We have an equivalence of categories
\begin{align}
	\Ver_p \cong \Ver^+_p\boxtimes \  \text{sVec}, 
\end{align}
see \cite{O2}.

\subsubsection{The Verlinde fiber functor} Let $\mathcal C$ be a symmetric fusion
category over $\textbf k$ of characteristic $p>0$. The  main result of \cite{O2} states:
 \begin{theorem*}
 	\cite[Theorem 1.5]{O2} There exists a symmetric tensor functor $F :\mathcal  C \to \Ver_p$.
 \end{theorem*}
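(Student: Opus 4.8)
The statement is the positive-characteristic analogue of Deligne's embedding theorem, with the Verlinde category $\Ver_p$ playing the role that $\operatorname{sVec}$ plays in characteristic zero, so my plan is to imitate Deligne's strategy while tracking the combinatorics that actually produces $\Ver_p$. The structural observation to build on is that $\Ver_p$ is the semisimplification of $\Rep_{\textbf k}(\mathbb Z_p)$, so it is the cyclic group $\mathbb Z_p$, not the full symmetric group, that governs the target. Concretely, for any object $X\in\mathcal C$ the $p$-fold tensor power $X^{\otimes p}$ carries a canonical $\mathbb Z_p$-action by cyclic permutation, compatible with the symmetric braiding; this is the mechanism I would exploit to manufacture $\Ver_p$-valued data intrinsically out of $\mathcal C$, in place of the vanishing-of-Schur-functors argument that drives Deligne's proof.

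First I would set up a reduction via the maximal super-Tannakian subcategory. Inside $\mathcal C$ sits a largest super-Tannakian subcategory, of the form $\Rep_{\textbf k}(G,z)$, whose classification is already understood (Deligne, Nagata, Masuoka); de-equivariantizing by its Tannakian part produces a symmetric fusion category whose maximal super-Tannakian subcategory is trivial, and I would arrange matters so that a symmetric tensor functor to $\Ver_p$ on the de-equivariantization reassembles, via the equivariant data, into one on $\mathcal C$. Combined with an induction on $\FPdim(\mathcal C)$, and using that $\Ver_p$ has only the four fusion subcategories $\vect,\operatorname{sVec},\Ver_p^+,\Ver_p$ (so any nonzero symmetric tensor functor either lands in a proper, super-Tannakian piece already handled, or is essentially surjective), this should reduce the theorem to a single base case: a symmetric fusion category with no nontrivial super-Tannakian subcategory must nonetheless admit a functor to $\Ver_p$.

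For that base case I would construct the functor from the $\mathbb Z_p$-action above, assembling the Frobenius twist sending $X$ to the stable part of $X^{\otimes p}$ under $\mathbb Z_p$ into a candidate symmetric tensor functor, and then verify monoidality and braiding-compatibility by Tannakian reconstruction, checking the conditions on a set of tensor generators and checking that negligible morphisms are sent to negligible (hence zero) morphisms so that the functor descends through the semisimplification. The hardest part, where I expect essentially all the difficulty to concentrate, is exactly this construction of a genuine symmetric tensor functor to the specific minimal target $\Ver_p$: Deligne's characteristic-zero argument rests on the integrality and positivity of categorical dimensions, both of which fail in characteristic $p$, and the Frobenius-twist substitute must be shown to yield an honest (not merely lax) functor that detects precisely the exotic objects, namely those not arising from a finite group scheme, and forces them into $\Ver_p$ rather than allowing $\mathcal C$ to escape the list.
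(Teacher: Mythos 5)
This statement is not proved in the paper at all: it is Ostrik's theorem, quoted verbatim from \cite[Theorem 1.5]{O2} and used as a black box, so your attempt has to stand entirely on its own. It does not, because it is an outline whose decisive step is explicitly left undone. You yourself locate ``essentially all the difficulty'' in the base case: producing a symmetric tensor functor to $\Ver_p$ from a symmetric fusion category with no nontrivial super-Tannakian subcategory. That step is not a technical verification that can be deferred; it \emph{is} the theorem, and a plan that reduces the theorem to itself is not a proof.

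The concrete mathematical failure is that your Frobenius-twist candidate does not even have the right codomain. The object you propose to assign to $X$ (the ``stable part'' of $X^{\otimes p}$ under the cyclic $\mathbb Z_p$-action) is canonically an object of $\mathcal C\boxtimes \Ver_p$, not of $\Ver_p$: this construction is precisely Ostrik's Frobenius functor $\mathcal C\to \mathcal C\boxtimes \Ver_p$ (which is moreover only semilinear with respect to the Frobenius of $\textbf{k}$, not $\textbf{k}$-linear). The whole content of the theorem is eliminating the $\mathcal C$-factor, i.e., showing that this data can be forced into the subcategory $\Ver_p$; in \cite{O2} that is where the hard analysis lives (behaviour of the Frobenius functor on simple objects, mod-$p$ dimension arguments, induction), and no appeal to ``Tannakian reconstruction on generators'' can repair a candidate functor whose target is wrong. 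Two smaller points: your de-equivariantization reduction is stated backwards --- the de-equivariantization functor $\mathcal C\to \mathcal C_G$ is itself a symmetric tensor functor, so a functor $\mathcal C_G\to \Ver_p$ simply composes with it, and no ``reassembly via equivariant data'' is needed --- and your use of the classification of the four fusion subcategories of $\Ver_p$ to organize the induction presupposes that a functor to $\Ver_p$ already exists, which is circular in a proof meant to produce that functor.
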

The functor $F$ is called the \emph{Verlinde fiber functor}.  It is shown in \cite[Theorem 2.6]{EOV} that it is unique up
to a non-unique isomorphism of tensor functors.

\section{Bounds for the ranks of  symmetric fusion categories}\label{section: bounds}
 In this section we prove our two main results concerning the ranks of non-super-Tannakian symmetric fusion categories. Throughout this section, we assume $\operatorname{char}(\textbf k)=p\geq 5$. Let $z=e^{2\pi i/p}$ be a primitive $p$-th root of unity. Recall that we denote by $\mathbb Q(z)$ the field extension generated by $z$ over $\mathbb Q$.
 
 	Let $\mathcal C$ be a symmetric fusion category and consider its Verlinde fiber functor $F:\mathcal C\to \Ver_p$. Recall that by \cite[Proposition 3.3]{O2} we have an equivalence of symmetric fusion categories
 \begin{align*}
 	\Ver_p \cong \Ver_p^+\boxtimes \text{ sVec}.
 \end{align*}
Consider the monoidal (non symmetric)  forgetful functor $\operatorname{Forget}:\operatorname{sVec} \to \operatorname{Vec}$. We have a (possibly non symmetric) tensor functor 
\begin{align}\label{tilde F}
	\tilde F:= (\operatorname{id}\boxtimes \operatorname{Forget}) \circ F: \mathcal C\to \Ver_p^+;
\end{align}
we denote also by $\tilde F$ the induced ring homomorphism $\mathcal{K(C)}\to 	\mathcal K(\Ver_p^+)$, and the induced $\mathbb Q$-algebra homomorphism $\mathcal{K(C)}_{\mathbb Q}\to 	\mathcal K(\Ver_p^+)_{\mathbb Q}$.  We are interested in studying the image of this map. By \cite[Theorem 4.5 (iv)]{BEO}, we have an isomorphism of $\mathbb Q$-algebras
 \begin{align*}
 	\mathcal K(\Ver_p^+)_{\mathbb Q}\cong \mathbb Q(z+z^{-1}),
 \end{align*}
and so $\tilde F(\mathcal{K(C)}_{\mathbb Q})$ is a subalgebra of $\mathbb Q(z+z^{-1})$. Since a subalgebra of a finite field extension is a field, then $\tilde F(\mathcal{K(C)}_{\mathbb Q})$ is a subfield of  $\mathbb Q(z+z^{-1})$. Hence to study the image of $\mathcal{K(C)}_{\mathbb Q}$ under $\tilde F$, we start by looking at subfields of $\mathbb Q(z+z^{-1})$.

 We make the usual identification of the Galois group of $\mathbb Q(z)$ with the multiplicative group $\mathbb Z_p^{\times}$. This is a cyclic group with $p-1$ elements, where $j$ acts on $\mathbb Q(z)$ by $j\cdot z= z^j$ for all $ j\in \mathbb Z_p^{\times}$. We denote the Galois group of the maximal real subextension $\mathbb Q(z+z^{-1}) $ of $\mathbb Q(z)$  by $\mathcal G$, which corresponds to the quotient of $\mathbb Z_p^{\times}$ by the subgroup $\{\pm 1\}$. Thus $\mathcal G$ is a cyclic group of order  $\frac{p-1}{2}$.
 
 By Galois correspondence, subextensions of $\mathbb Q(z+z^{-1})$ are in bijection with subgroups of $\mathcal G$. That is, for every positive integer $k$ that divides $\frac{p-1}{2}$ there exists a unique subextension $A_k$ of $\mathbb Q(z+z^{-1})$  such that $[A_k:\mathbb Q]=k$, and its Galois group is exactly the quotient of $\mathcal G$ by the unique subgroup $H_m$ of order $m$, where $mk=\frac{p-1}{2}$. Moreover, every subextension is of this form, and $A_k$ is the set of elements fixed by every element in $H_m$.

Consider the basis $\left\{z^i+z^{-i}\right\}_{i=1}^{\frac{p-1}{2}}$ of $\mathbb Q(z+z^{-1})$. Then the group $\mathcal G$ (and thus also all subgroups $H_m$)  acts on this set freely and transitively by permutation, $$ a \cdot (z^j+z^{-j}) = z^{aj}+ z^{-aj},$$ 
for all $a\in \mathcal G$. So the orbits of the action of $H_m$ on this set have exactly $m$ elements. Let $\mathcal O_1, \dots, \mathcal O_k$ denote the orbits of the action of $H_m$, and define
\begin{align}\label{x_i basis}
	x_i := \sum\limits_{z^t +z^{-t}\in \mathcal O_i} (z^t+z^{-t}),
\end{align}
so that $\left\{x_1, \dots, x_{k}\right\}$ is a basis of $A_k$.  Without loss of generality, we choose the labelling so that $z+z^{-1}=z^{p-1}+z^{-(p-1)}\in \mathcal O_k$,

\begin{theorem}\label{rango p-1/2}
	Let $p\geq 5$. If $\mathcal C$ is a non-super-Tannakian symmetric fusion category, then $$\rank(\mathcal C)\geq \frac{p-1}{2}.$$
\end{theorem}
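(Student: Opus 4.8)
The idea is to exploit the ring homomorphism $\tilde F \colon \mathcal{K(C)} \to \mathcal K(\Ver_p^+)$ from \eqref{tilde F} together with the Galois-theoretic description of $\mathbb Q(z+z^{-1})$ set up above. Suppose $\mathcal C$ is not super Tannakian. First I would observe that by the subcategory count for $\Ver_p$ recalled in the introduction, the image of the Verlinde fiber functor $F$ is either all of $\Ver_p$ or is $\Ver_p^+$; in either case the composite $\tilde F$ lands in $\Ver_p^+$ and must have image of $\mathbb Q$-dimension strictly greater than $1$. Indeed, if $\tilde F(\mathcal{K(C)})_{\mathbb Q}$ were just $\mathbb Q$ (the trivial subfield), then every simple object of $\mathcal C$ would map under $\tilde F$ to a rational multiple of $\mathbf 1$, which would force $\mathcal C$ to be super Tannakian, contradicting our assumption. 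So the key point to nail down is that non super Tannakianness precisely prevents $\tilde F(\mathcal{K(C)})_{\mathbb Q}$ from collapsing to the trivial subextension $A_1 = \mathbb Q$.

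Next I would use that $\tilde F(\mathcal{K(C)})_{\mathbb Q}$ is a \emph{subfield} of $\mathbb Q(z+z^{-1}) \cong \mathcal K(\Ver_p^+)_{\mathbb Q}$, hence by the Galois correspondence established above it equals $A_k$ for some divisor $k$ of $\frac{p-1}{2}$ with $k \geq 2$. The plan is then to count: the $\mathbb Q$-dimension of $\tilde F(\mathcal{K(C)})_{\mathbb Q}$ is at most the number of distinct images $\tilde F(X)$ as $X$ ranges over $\mathcal O(\mathcal C)$ (since these classes span the image as a $\mathbb Q$-vector space), and this in turn is at most $\rank(\mathcal C)$. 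Combining, $\rank(\mathcal C) \geq \dim_{\mathbb Q} A_k = k \geq 2$. To get the sharp bound $\frac{p-1}{2}$ rather than merely $2$, I would look more carefully at which basis elements $x_i$ of $A_k$ must appear, using the explicit action of $\mathcal G$ on $\{z^i + z^{-i}\}$ and the fact that $\tilde F(\mathbf 1) = 1 = z^0$ together with multiplicativity forces a full orbit's worth of basis vectors into the image; the orbit structure and the normalization $z + z^{-1} \in \mathcal O_k$ should let me pin down the relevant count.

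The subtle point — and the main obstacle — is bridging the gap between ``$\rank(\mathcal C) \geq k$'' for some nontrivial $k$ and the exact bound $\frac{p-1}{2}$. A priori $k$ could be a small proper divisor of $\frac{p-1}{2}$, giving a weak bound. The resolution I would pursue is to show that the dimension (or Frobenius--Perron dimension) constraints, combined with the requirement that $\tilde F$ be a genuine \emph{ring} homomorphism sending simples to effective (nonnegative) combinations in $\mathcal K(\Ver_p^+)$, actually force $\tilde F$ to be surjective onto $\mathcal K(\Ver_p^+)_{\mathbb Q}$, i.e.\ $k = \frac{p-1}{2}$. Concretely, if the image were a proper subfield $A_k$ with $k < \frac{p-1}{2}$, then the nonzero subgroup $H_m$ fixing $A_k$ would act nontrivially; I would try to derive a contradiction with the positivity of structure constants or with the existence of a simple object whose image is not $H_m$-invariant, using that the $L_i$ for $i$ odd are linearly independent in $\mathcal K(\Ver_p^+)_{\mathbb Q}$.

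\medbreak
The cleanest route, which I would check first, is simply that each simple $X \in \mathcal O(\mathcal C)$ contributes at most one basis vector to the image and the image is a $\frac{p-1}{2}$-dimensional field, forcing $\rank(\mathcal C) \geq \frac{p-1}{2}$ directly once surjectivity of $\tilde F$ onto its field image is combined with the Galois data; so the crux really is establishing that non super Tannakianness forces $\tilde F(\mathcal{K(C)})_{\mathbb Q} = \mathbb Q(z+z^{-1})$ in full.
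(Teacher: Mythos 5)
Your setup matches the paper's exactly: pass to $\tilde F\colon \mathcal{K(C)}\to \mathcal K(\Ver_p^+)$, identify $\tilde F(\mathcal{K(C)})_{\mathbb Q}$ with a subfield $A_k\subseteq \mathbb Q(z+z^{-1})$, note $k>1$ because $k=1$ would make $\mathcal C$ super Tannakian, and reduce everything to showing $k=\frac{p-1}{2}$. But that last step is the entire content of the theorem, and your proposal does not prove it. You correctly flag it as ``the main obstacle'' and then offer only a direction (``derive a contradiction with the positivity of structure constants\dots''), which restates the goal rather than resolving it. Moreover, the one concrete mechanism you suggest --- that $\tilde F(\mathbf 1)=1$ together with multiplicativity ``forces a full orbit's worth of basis vectors into the image'' --- cannot work as stated: every subextension $A_k$ contains $1$ and is closed under multiplication, so nothing about being a unital subring distinguishes $A_k$ from the full field. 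Ruling out $1<k<\frac{p-1}{2}$ genuinely requires the positivity input, and exploiting it is nontrivial.

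For comparison, here is what the paper does at that point. Since simple objects of $\mathcal C$ map to effective classes (nonnegative integer combinations of the odd $L_i$), the subfield $A_k$ must admit a $\mathbb Q$-basis of such effective elements. The paper then expresses the orbit-sum basis $x_1,\dots,x_k$ of $A_k$ in terms of the $L_s$ via $z^t+z^{-t}=L_{t+1}-L_{t-1}$ (and $z^{p-1}+z^{-(p-1)}=-L_{p-2}$), so each $x_i$ is a $\pm 1$ combination of $L_s$'s; it proves (by a separate subgroup argument about $H_m$ containing $\bar 3, \bar 5,\dots$) that every $x_i$ has both a positive and a negative summand when $k<\frac{p-1}{2}$; it then builds a cyclic sequence of indices alternating between positive and negative summands shared by distinct $x_{j_i}$'s, and deduces that any element of $A_k$ which is a nonnegative combination of $L_s$'s must have equal coefficients along the cycle, hence lies in a subspace of dimension strictly less than $k$. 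This contradicts the existence of an effective basis, forcing $k=\frac{p-1}{2}$. None of this combinatorial core --- which is where positivity actually enters --- appears in your proposal, so as written the argument establishes only $\rank(\mathcal C)\geq k$ for some unknown divisor $k\geq 2$ of $\frac{p-1}{2}$.
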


\begin{proof}
	
	Consider the tensor functor $\tilde F:\mathcal C\to \Ver_p^+$  as defined in Equation \eqref{tilde F}. According to \cite[Theorem 4.5 (iv)]{BEO}, under the isomorphism
 \begin{align*}
 	\mathcal K(\Ver_p^+)_{\mathbb Q}\cong \mathbb Q(z+z^{-1}),
 \end{align*}
we have identifications
 \begin{align}\label{identification in objects}
 	L_{2j+1}=\sum\limits_{l=1}^j (z^{2l}+z^{-2l})+1, \ \ \text{for } j=0, \dots, (p-3)/2.
 \end{align}

Recall  that $\tilde F(\mathcal{K(C)}_{\mathbb Q})$ is a subfield of $\mathcal{K}(\Ver_p^+)_{\mathbb Q}\cong \mathbb Q(z+z^{-1})$. Now, 
\begin{align*}
	\rank(\mathcal{K(C)})\geq \rank(\tilde F(\mathcal{K(C)}) ) = \dim(\tilde F(\mathcal{K(C)}_{\mathbb Q})),
\end{align*}
where $\rank(\mathcal{K(C)})$ refers to the rank of $\mathcal{K(C)}$ as a free abelian group. 
Thus we would like to show that $\dim(\tilde F(\mathcal{K(C)}_{\mathbb Q}))=\frac{p-1}{2}$, or in other words, that $\tilde F(\mathcal{K(C)}_{\mathbb Q})=\mathbb Q(z+z^{-1})$.

By our discussion at the beginning of this section, we know that  $\tilde F(\mathcal{K(C)}_{\mathbb Q})$ is of the form $A_k$ for some $k$ that divides $\frac{p-1}{2}$, where $A_k$ is the unique subextension of order $k$. 
 Hence to prove the statement it is enough to show that $k=\frac{p-1}{2}$. Note that $k>1$, since if $k=1$ the image of $\tilde F$ would be a multiple of the identity and we would have a symmetric tensor functor from $\mathcal C$ to $\operatorname{sVec}$, which would mean that $\mathcal C$ is super-Tannakian. 

For objects $X\in \mathcal C$, their images under $\tilde F$ are objects in $\Ver_p^+$, and thus can be written as $\mathbb Z_{\geq 0}$ linear combinations of $L_i$'s with $i$ odd. Then $\tilde F(\mathcal {K(C)})$ has a basis of elements of this form, and thus so does $A_k$. That is, 
\begin{align}\label{A_k}
	\text{$A_k$ has a basis given by  $\mathbb Z_{\geq 0}$ linear combinations of  $L_i$'s for $i$ odd.}
\end{align}

For the sake of contradiction, assume that $k<\frac{p-1}{2}$. 
We already know that $k>1$.

Using formula \eqref{identification in objects} we compute
\begin{align*}
	&	z^t+z^{-t}\ =\  L_{t+1}-L_{t-1} \ \ \text{for } t \text{ even,} \  2\leq t<p-1. 
\end{align*}		
On the other hand, 
\begin{align*}
		& L_{p-2}= \sum_{l=1}^{\frac{p-3}{2}} (z^{2l}+z^{-2l}) +1 = - (z^{p-1}+z^{-(p-1)}),
\end{align*}
since $\sum\limits_{\substack{i=-(p-1)\\ i \text{ even}}}^{p-1} z^i=0.$
Let $\mathcal O_1, \dots, \mathcal O_k$ and $x_1, \dots, x_k$ be as in \eqref{x_i basis}. Since we can pick $t$ to be a positive even number for each summand $z^t+z^{-t}$ of $x_i$ (if not, replace $t$ by $-t$ or  $t-p$), then we can identify each $x_i$ with a sum of $L_s$'s with multiplicity $\pm 1$, as follows:
\begin{align*}
	x_i &= \sum\limits_{\substack{z^t +z^{-t}\in \mathcal O_i\\ t \text{ even}\\ 2\leq t <p-1}} (L_{t+1}-L_{t-1}), \text{ for } i \ne k,&\text{and}
&&	x_k = -L_{p-2}+ \sum\limits_{\substack{z^t +z^{-t}\in \mathcal O_k\\ t \text{ even}\\ 2\leq t <p-1}} (L_{t+1}-L_{t-1}) .
\end{align*}
Let $s$ odd, $1<s\leq  p-2$. Note that $L_s$ appears with nonzero multiplicity in either two basis elements, with multiplicity $1$ and $-1$, respectively, or in none (since it may cancel out with itself). On the other hand, $L_{1}$ appears in only one basis element (explicitly, the basis element $x_j$ such that  $z^2+z^{-2}\in \mathcal O_j$), with multiplicity $-1$. We will say $L_s$ is a  ``positive" summand of $x_i$ if it has multiplicity 1 in $x_i$, and is a ``negative" summand if it has multiplicity $-1$.

We claim that every $x_i$ has at least one positive and one negative summand. In fact, this is clear for $1\leq i<k$, since the number of positive summands in $x_i$ is the same as the number of negative summands. Suppose for contradiction that we have $x_k=-L_s$ for some even $s$, $2\leq s\leq p-2$. Our assumption $k<\frac{p-1}{2}$ assures  that every orbit has at least two elements, so it is not possible to have $x_k=-L_{p-2}$. Since we are assuming $x_k$ has only one negative summand, $L_{p-2}$ must cancel out with a positive summand. This implies $z^3+z^{-3}=z^{p-3}+z^{-(p-3)}\in \mathcal O_k$ as well, and so $H_m$, the unique subgroup of order $m$, contains the class $\bar 3$ of the number $3\in \mathbb Z_p^{\times}$, see discussion around Equation \eqref{x_i basis}. Now, either $x_k=-L_{p-4}$, or $-L_{p-4}$ cancels out. In the latter case, we have that $z^5+z^{-5}=z^{p-5}+z^{-(p-5)}\in \mathcal O_k$ and so $\bar 5 \in H_m$. Recursively, we get that $H_m=\{\bar 1, \bar 3, \bar 5, \dots, \bar j\}$ for some odd $3\leq  j\leq p-2$. We claim this contradicts that $H_m$ is a proper subgroup. In fact, since $H_m$ is a subgroup, it must contain the classes of $3l$ for all $l$ odd, $1\leq l \leq j$. Let $l\in H_m$ such that $3l\leq j < 3(l+2)$. Note that $l+2$ is also in $H_m$ (if not, then $3l\leq j <l+2$, and so $l<1$, which is not possible). So $3(l+2)$ must also be in $H_m$. But since $j<3(l+2)$, it must be the case that $3(l+2) > p$ and $\overline{ 3(l+2)}=\bar n$ for some odd $1\leq n<j$. So we have the inequalities
\begin{align*}
	3l\leq j <p<3(l+2),
\end{align*}
which imply $p=3l+2$ or $p=3l+4$. If $p=3l+2$, since $H_m$ contains the classes of all odd elements from $1$ to $3l=p-2$ we get that $|H_m|=\frac{p-1}{2}$, a contradiction. If $p=3l+4$, then $H_m$ contains all odd elements from $1$ to $3l=p-4$ (its missing at most one element) and thus again $H_m$ must have all odd elements, a contradiction. Hence $x_k$ has at least one positive and one negative summand.

Our aim is to construct sequences of indexes, alternating between negative and positive summands of different $x_i$'s. We have shown every basis element has at least one positive and one negative summand. With this in mind, we begin the construction of our sequences. 

Fix $s_0\ne 1$ so that $L_{s_0}$ is a positive summand of some $x_{j_0}$. Since $k>1$, then there exists $j_1\ne j_0$ such that $L_{s_0}$ is a negative summand of $x_{j_1}$. By our preceding discussion, there must exist a positive summand of $x_{j_1}$. So we can find $s_1\ne 1$ (since $L_1$ can only be a negative summand) such that $L_{s_1}$ is a positive summand of $x_{j_1}$. Thus $L_{s_1}$ is a negative summand of $x_{j_2}$ for some $j_2\ne j_1$. Again, there exists some $s_2\ne 1$ such that $L_{s_2}$ is a positive summand of $x_{j_2}$. Recursively, we can construct sequences of indexes $\{s_t\}$ and $\{j_t\}$ such that $s_t \ne 1$ and $j_t\ne j_{t+1}$ for all $t$, and $L_{s_t}$ is a positive summand of  $x_{j_{t}}$ and a negative summand of $x_{j_{t+1}}$. Since there are only finitely many $\{x_i\}$, the indexes $j_t$ must repeat at some point. Without loss of generality, assume $j_1$ is the first one that repeats, so our sequence is $\{j_1, j_2, j_3, \dots, j_n, j_1, \dots \}$, for some $n\geq 2$.

Let $y:=a_1 x_1 + \dots + a_kx_k $ be an element in $A_k$ that can be written as a positive linear combination of $L_t$'s. We show that $y $ is in the subspace generated by  $\{x_i\}_{i\ne j_1,\dots j_n}$ and  $x_{j_1}+ \dots  + x_{j_n}$. Since $L_{s_1}$ has multiplicity $a_{j_1}-a_{j_2}$ in $y$, it must happen that $a_{j_1}\geq a_{j_2}$. Now, $L_{s_2}$ has multiplicity $a_{j_2}-a_{j_3}$ in $y$, which implies $a_{j_2}\geq a_{j_3}$. Then we can obtain a sequence
\begin{align*}
	a_{j_1}\geq  a_{j_2}\geq \dots\geq a_{j_n}\geq a_{j_1},
\end{align*}
which implies $a_{j_1}=a_{j_{2}}=\dots = a_{j_n}$, as desired. 

Consequently, elements that can be written as a positive linear combination of $L_i$'s are contained in a subspace of dimension less than $k$, which contradicts our statement \eqref{A_k}. Hence $k=\frac{p-1}{2}$ and so 
\begin{align*}
	\rank(\mathcal{K(C)})\geq \rank(\tilde F(\mathcal{K(C)}) )=\dim(\tilde F(\mathcal{K(C)}_{\mathbb Q}) )=\dim(A_k)= \frac{p-1}{2},
\end{align*}
which finishes the proof.
\end{proof}

\medbreak 

Let $\mathcal C$ now be a symmetric fusion category with Verlinde fiber functor $F:\mathcal C\to \Ver_p$, and suppose $F$ is surjective.
We denote also by $F$ the induced $\mathbb Q$-algebra homomorphism $\mathcal{K(C)}_{\mathbb Q}\to 	\mathcal K(\Ver_p)_{\mathbb Q}$. Then $F(\mathcal{K(C)}_{\mathbb Q})$ is a subalgebra of $\mathcal{K}(\Ver_p)_{\mathbb Q}$, and we will show that it is exactly $\mathcal{K}(\Ver_p)_{\mathbb Q}$. 

\begin{remark}\label{Iso for K(Verp)}
	Consider the $\mathbb Q$-algebra $\mathbb Q[\mathbb Z_2]\cong \mathbb Q(\epsilon)/(\epsilon^2-1)$. Then we have an isomorphism of $\mathbb Q$-algebras 
	\begin{align}\label{Q(Z_2) isomorphism}
		\begin{aligned}
\mathbb Q(z+z^{-1})\otimes \mathbb Q[\mathbb Z_2]&\xrightarrow{\cong} \mathbb Q(z+z^{-1})\oplus \mathbb Q(z+z^{-1})\\
w\otimes (a+b\epsilon) &\mapsto ((a+b)w, (a-b)w),
		\end{aligned}
	\end{align}
for all $w\in \mathbb Q(z+z^{-1})$ and $a,b\in \mathbb Q$. Recall that by \cite[Proposition 3.3]{O2} we have an equivalence of symmetric fusion categories 
\begin{align*}
	\Ver_p\cong \Ver_p^+\boxtimes \operatorname{sVec}.
\end{align*}
Hence \eqref{Q(Z_2) isomorphism} induces an isomorphism of $\mathbb Q$-algebras
\begin{align}\label{iso for Ver_p}
	\mathcal K(\Ver_p)_{\mathbb Q} \cong \mathcal K(\Ver_p^+)_{\mathbb Q} \otimes \mathcal K(\operatorname{sVec})_{\mathbb Q}\cong \mathbb Q(z+z^{-1}) \otimes \mathbb Q[\mathbb Z_2]\xrightarrow{\cong} \mathbb Q(z+z^{-1})^{\oplus 2},
\end{align}
where the second isomorphism is given in \cite[Theorem 4.5 (iv)]{BEO}.

\end{remark}

By \eqref{iso for Ver_p}, we can identify $F(\mathcal{K(C)}_{\mathbb Q})$ with a $\mathbb Q$-subalgebra of $\mathbb Q(z+z^{-1})^{\oplus 2}$. Hence we start by looking at subalgebras of $\mathbb Q(z+z^{-1})^{\oplus 2}$. Recall we denote by $A_k$ the unique subextension of $\mathbb Q(z+z^{-1})$  such that $[A_k:\mathbb Q]=k$, see discussion at the beginning of the section.

\begin{lemma}\label{suablgebras}
	Subalgebras of $\mathbb Q(z+z^{-1})^{\oplus 2}$ of dimension greater than $\frac{p-1}{2}$ are of the form $\mathbb Q(z+z^{-1})\oplus A_k$ or $A_k\oplus \mathbb Q(z+z^{-1})$, where $k$ is a positive integer dividing $\frac{p-1}{2}$.
\end{lemma}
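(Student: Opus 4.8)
The plan is to analyze an arbitrary subalgebra $B\subseteq \mathbb Q(z+z^{-1})^{\oplus 2}$ through the two coordinate projections, in the spirit of Goursat's lemma. Write $F=\mathbb Q(z+z^{-1})$ and let $\pi_1,\pi_2\colon F\oplus F\to F$ be the projections. First I would observe that each image $\pi_i(B)$ is a finite-dimensional $\mathbb Q$-subalgebra of the field $F$; being a finite-dimensional integral domain over $\mathbb Q$, it is itself a field, hence a subextension of $\mathbb Q(z+z^{-1})$. By the Galois correspondence recalled above, $\pi_1(B)=A_k$ and $\pi_2(B)=A_l$ for divisors $k,l$ of $\frac{p-1}{2}$, so that $B\subseteq A_k\oplus A_l$ and it remains only to force equality.

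Next I would study the ``slices'' $B_1=\{a\in A_k : (a,0)\in B\}$ and $B_2=\{b\in A_l : (0,b)\in B\}$. Each $B_i$ is an ideal of the corresponding field $A_k$ or $A_l$ (indeed, if $(a,0)\in B$ and $(x,y)\in B$ then $(ax,0)\in B$), and therefore equals either $0$ or the whole field. Since $B_1\oplus 0$ is the kernel of $\pi_2|_B$, the induced short exact sequence gives $\dim_{\mathbb Q} B=\dim_{\mathbb Q}B_1+\dim_{\mathbb Q}\pi_2(B)=\dim_{\mathbb Q}B_1+l$.

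The dimension hypothesis now does all the work. Since $l$ divides $\frac{p-1}{2}$ we have $l\le \frac{p-1}{2}<\dim_{\mathbb Q}B$, whence $\dim_{\mathbb Q}B_1=\dim_{\mathbb Q}B-l>0$; as $B_1$ is a nonzero ideal of the field $A_k$, this forces $B_1=A_k$, i.e.\ $A_k\oplus 0\subseteq B$. The symmetric argument applied to $\pi_1|_B$ gives $B_2=A_l$ and $0\oplus A_l\subseteq B$, and together these yield $B=A_k\oplus A_l$, as claimed. I expect no serious obstacle beyond bookkeeping: the one conceptual point is that the genuinely non-product subalgebras permitted by Goursat's lemma are the ``twisted diagonals'', i.e.\ graphs of an isomorphism between proper subquotients of the two factors, and these are exactly what the strict inequality $\dim_{\mathbb Q}B>\frac{p-1}{2}$ excludes, since any such graph would make one projection injective on $B$ and hence bound $\dim_{\mathbb Q}B$ by $\frac{p-1}{2}$.
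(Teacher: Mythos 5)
Your proof is correct, but it follows a genuinely different route from the paper's. The paper argues by a dichotomy on idempotents: if the subalgebra $\mathcal A$ contains no nontrivial idempotent, then it is reduced, hence by Artin--Wedderburn a finite product of fields, hence a single field, and then the projection to one coordinate is injective on $\mathcal A$, giving $\dim_{\mathbb Q}\mathcal A\leq\frac{p-1}{2}$; while if $\mathcal A$ contains a nontrivial idempotent $e\in\{(1,0),(0,1)\}$, then $\mathcal A=e\mathcal A\oplus(1-e)\mathcal A$ splits as a direct sum of finite-dimensional subalgebras of $\mathbb Q(z+z^{-1})$, i.e.\ is of the form $A_k\oplus A_l$. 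You bypass the structure theory entirely: you identify the coordinate projections of $B$ as subfields $A_k$ and $A_l$ via the Galois correspondence, observe that the kernels of the restricted projections are ideals of these fields, and use rank--nullity together with the strict inequality $\dim_{\mathbb Q}B>\frac{p-1}{2}$ to force both kernels to be full, so that $B$ contains $A_k\oplus 0$ and $0\oplus A_l$ and therefore equals $A_k\oplus A_l$. In effect your argument \emph{produces} the idempotent $(1,0)\in B$ that the paper's second case assumes, instead of splitting into cases on its existence. What each approach buys: the paper's version isolates a reusable structural fact (an idempotent-free subalgebra has dimension at most $\frac{p-1}{2}$), whereas yours is more elementary (no Artin--Wedderburn), identifies $k$ and $l$ intrinsically as the degrees of the two projections, and makes the role of the strict dimension hypothesis completely transparent. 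Two small points you should add for completeness: note that $\pi_i(B)\neq 0$ (otherwise $B$ embeds into a single factor and $\dim_{\mathbb Q}B\leq\frac{p-1}{2}$), so the integral-domain argument applies; and if subalgebras are not assumed to contain $(1,1)$, the claim that a nonzero finite-dimensional subalgebra of a field is a subfield needs one extra line (multiplication by a nonzero element is injective, hence surjective, yielding an idempotent identity, which in a field must be $1$).
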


\begin{proof}
Let $\mathcal A$ be a subalgebra of $\mathbb Q(z+z^{-1})^{\oplus 2}$.
Suppose first that	 $\mathcal A$ has no nontrivial idempotents. Note that $\mathbb Q(z+z^{-1})^{\oplus 2}$ has no nilpotent elements and thus neither does $\mathcal A$.  Hence $\mathcal A$ is semisimple and so by Artin-Wedderburn's theorem it is isomorphic to a finite product of field extensions over $\mathbb Q$. Since $\mathcal A$ has no idempotents, this implies  that $\mathcal A$ is isomorphic to a field extension over $\mathbb Q$. 

Consider the projection map $p$ from $\mathbb Q(z+z^{-1})^{\oplus 2}$ to its first  summand, and let $q$ denote its restriction to $\mathcal A$. Then $\ker(q)=0$ or $\mathcal A$. If $\ker(q)=\mathcal A$ then elements in $\mathcal A$ are of the form  $(0,a)$, which is only possible for $a=0$ since $\mathcal A$ is a field. Hence if $\mathcal A \ne 0$ we must have $\ker(q)= 0$, that is, we have an injective map $A\hookrightarrow \mathbb Q(z+z^{-1})$, and so $\dim(\mathcal A)\leq \frac{p-1}{2}$.

	Suppose now that $\mathcal A$ contains a nontrivial idempotent $e$. Then $e$ is either $(1,0)$ or  $(0,1)$, and we have an isomorphism of $\mathbb Q$-algebras $$\mathcal A \cong e \cdot  \mathcal A \oplus (1-e)\cdot \mathcal A.$$ Hence $\mathcal A$ is a direct sum of $A_k\oplus A_l$ of subalgebras of $\mathbb Q(z+z^{-1})$, where $k,l\in \mathbb Z_{\geq 0}$ divide $\frac{p-1}{2}$. Lastly, note that if both $k,l<\frac{p-1}{2}$, then
\begin{align*}
\frac{p-1}{2}	<\dim(A_k\oplus A_l)=k+l\leq \frac{p-1}{4} +\frac{p-1}{4}=\frac{p-1}{2},
\end{align*}
a contradiction.
Hence we must have either $k=\frac{p-1}{2}$ or $l=\frac{p-1}{2}$, and thus either $A_k= \mathbb Q(z+z^{-1})$ or $A_l= \mathbb Q(z+z^{-1})$, as desired. 
\end{proof}

The proof of the following theorem follows analogous steps as the ones in the proof of Theorem \ref{rango p-1/2}.

\begin{theorem}\label{theo: p-1}Let $p\geq 5$ and 
	let $\mathcal C$ be a symmetric fusion category with Verlinde fiber functor $F:\mathcal C \to \Ver_p$. If $F$ is surjective, then $$\rank(\mathcal C)\geq p-1.$$
\end{theorem}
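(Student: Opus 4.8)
The plan is to bound $\rank(\mathcal C)$ below by the $\mathbb Q$-dimension of the image of $F$ on Grothendieck rings and to show this image is everything. Writing $V:=F(\mathcal{K(C)})_{\mathbb Q}$, since $F$ is a ring homomorphism we have $\rank(\mathcal{K(C)})\geq \rank(F(\mathcal{K(C)}))=\dim_{\mathbb Q} V$, so it suffices to prove $V=\mathcal K(\Ver_p)_{\mathbb Q}\simeq \mathbb Q(z+z^{-1})^{\oplus 2}$ under \eqref{second identification}. I first record, using $\Ver_p\simeq \Ver_p^+\boxtimes\text{sVec}$ and $L_{p-1}\otimes L_i\simeq L_{p-i}$, that a simple $L_i$ corresponds to $(L_i,L_i)$ for $i$ odd and to $(L_{p-i},-L_{p-i})$ for $i$ even, where on the right $L_j$ (for $j$ odd) denotes its class \eqref{identification in objects} in $\mathbb Q(z+z^{-1})$. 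In particular, if $\pi_1,\pi_2$ denote the two projections then $\pi_1\circ F$ is exactly the ring map induced by $\tilde F$ of \eqref{tilde F}.

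Since a surjective $F$ cannot factor through $\text{sVec}$, the category $\mathcal C$ is not super Tannakian, so Theorem \ref{rango p-1/2} applied to $\tilde F$ gives $\pi_1(V)=\mathbb Q(z+z^{-1})$ and hence $\dim_{\mathbb Q}V\geq \frac{p-1}{2}$. By Lemma \ref{suablgebras} there are then exactly two cases: either $\dim_{\mathbb Q}V>\frac{p-1}{2}$, in which case $V=A_k\oplus A_l$, or $\dim_{\mathbb Q}V=\frac{p-1}{2}$, in which case $\pi_1|_V$ is an isomorphism and $V=\{(a,\sigma(a)):a\in\mathbb Q(z+z^{-1})\}$ is the graph of a single $\sigma\in\mathcal G$.

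The second case must be excluded. If $\sigma=\id$ then every $F(X)$ has equal components, which by the description above forces each $F(X)$ to lie in $\Ver_p^+$, so $F$ is not surjective, a contradiction. If $\sigma\neq\id$ I would argue by positivity: for a simple $S$ write $\pi_1(F(S))=a_S+b_S$ and $\pi_2(F(S))=a_S-b_S$, where $a_S$ (resp. $b_S$) is the nonnegative combination of the classes $L_j$ coming from the odd-index (resp. even-index) constituents of $F(S)$. The graph relation $\pi_2(F(S))=\sigma(\pi_1(F(S)))$ then reads $(\iota_0-\iota_\sigma)(a_S)=(\iota_0+\iota_\sigma)(b_S)$ after applying the Frobenius--Perron real embedding $\iota_0$ and setting $\iota_\sigma:=\iota_0\circ\sigma$. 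Because $\iota_0$ strictly dominates every conjugate embedding on the positive cone away from $\mathbb Q\cdot\mathbf 1$, testing this identity against objects supplied by surjectivity (for instance one whose image contains $L_2$, forcing $b_S\neq 0$) should force $\sigma=\id$, the desired contradiction. Thus $\dim_{\mathbb Q}V>\frac{p-1}{2}$ and $V=A_k\oplus A_l$.

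It remains to pin down $k$ and $l$. Since $V$ is now a genuine product, $\pi_1(V)=A_k$ is the full field, so $k=\frac{p-1}{2}$. For $l$ I would rerun the combinatorial core of Theorem \ref{rango p-1/2}: the product $V$ inherits a basis of $\mathbb Z_{\geq 0}$-combinations of the images of the $L_i$ (the analogue of \eqref{A_k}), and the alternating positive/negative-summand cycle argument, carried out inside $A_l$, forces $l=\frac{p-1}{2}$. Hence $\dim_{\mathbb Q}V=p-1$ and $\rank(\mathcal C)\geq p-1$. The genuinely new difficulty, beyond transcribing the proof of Theorem \ref{rango p-1/2}, is the second projection $\pi_2$, on which the even-index simples enter with a negative sign so that the nonnegativity driving the rank-$\frac{p-1}{2}$ bound is lost; both the elimination of the $\sigma\neq\id$ graph and the determination of $l$ rest on reinstating positivity through the Frobenius--Perron embedding together with the surjectivity of $F$, and I expect this to be the main obstacle.
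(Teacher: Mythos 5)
Your reduction to computing $\dim_{\mathbb Q}V$, the dichotomy via Lemma \ref{suablgebras} (product case versus graph of some $\sigma\in\mathcal G$), and your elimination of the graph with $\sigma=\id$ are all sound, and your insistence on the honest product identification ($L_i\mapsto(L_i,L_i)$ for $i$ odd, $L_i\mapsto(L_{p-i},-L_{p-i})$ for $i$ even) raises a legitimate subtlety. But the proposal has two genuine gaps, both of which you flag yourself. First, your plan for the graph case $\sigma\neq\id$ fails as stated: after applying $\iota_0$, both sides of $(\iota_0-\iota_\sigma)(a_S)=(\iota_0+\iota_\sigma)(b_S)$ are non-negative by Frobenius--Perron dominance, so no choice of test object (with $b_S\neq 0$ or otherwise) produces a contradiction. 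The step can be repaired, but only by applying a conjugate embedding instead, namely $\iota_0\circ\sigma^{-1}$: the relation $a_S-\sigma(a_S)=b_S+\sigma(b_S)$ then has left side $\leq 0$ and right side $\geq 0$, so both vanish, and strict dominance (valid for $[L_j]$ with $j$ odd, $j\neq 1$, against any nontrivial Galois element) forces $b_S=0$ and $a_S\in\mathbb Q\cdot[L_1]$ for every simple $S$, i.e. $F(\mathcal C)\subseteq\operatorname{Vec}$, contradicting surjectivity.

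Second, and more seriously, the determination of $l$ --- which is the actual content of the theorem and occupies essentially all of the paper's proof --- is not carried out. You assert that $V$ ``inherits a basis of $\mathbb Z_{\geq 0}$-combinations of the images of the $L_i$'', but in your product coordinates this is exactly what is false: $\pi_2$ sends even simples to $-[L_{p-i}]$, so elements of $A_l$ coming from actual objects are mixed-sign, and the alternating-cycle argument of Theorem \ref{rango p-1/2} cannot be transcribed; you acknowledge this sign problem in your closing sentence without resolving it. The paper supplies the missing positivity by running the cycle argument in the second summand of the identification \eqref{identification in objects 2}, in which the even simples occur with non-negative coordinates ($L_2=(0,z+z^{-1})$, $(0,z^t+z^{-t})=L_{t+1}-L_{t-1}$ for odd $t$), and where $L_{p-1}$ plays the distinguished role of appearing with multiplicity $+1$ in exactly one orbit sum $x_j$; that is the analogue of \eqref{A_k} that your outline never establishes. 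You are right that reconciling this positivity with Lemma \ref{suablgebras} takes care --- \eqref{identification in objects 2} is not literally a homomorphism to the componentwise product, as it sends $L_1\mapsto(1,0)$ and $L_{p-1}\mapsto(0,-1)$ while $L_{p-1}^2=L_1$ --- but identifying the difficulty is not the same as overcoming it. As it stands, the proposal is an outline of the paper's strategy with its two pivotal steps left open.
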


\begin{proof}
	Let $F:\mathcal C\to \Ver_p$ be as in the statement.  By Equation \eqref{iso for Ver_p}, we have an isomorphism of $\mathbb Q$-algebras
	\begin{align}\label{second identification}
		\mathcal K(\Ver_p)_{\mathbb Q} \cong \mathcal K(\Ver_p^+)_{\mathbb Q} \otimes \mathcal K(\operatorname{sVec})_{\mathbb Q}\cong \mathbb Q(z+z^{-1}) \otimes \mathbb Q[\mathbb Z_2]\xrightarrow{\cong} \mathbb Q(z+z^{-1})^{\oplus 2},
	\end{align}
	induced from the $\mathbb Q$-algebras isomorphism $\mathbb Q(z+z^{-1})\otimes \mathbb Q[\mathbb Z_2]\xrightarrow{\cong} \mathbb Q(z+z^{-1})^{\oplus 2}$, given in Equation \ref{Q(Z_2) isomorphism}. Hence, under this isomorphism we have identifications
	\begin{align}\label{identifications}
		\begin{aligned}
			L_{t+1}-L_{t-1}&=(z^t+z^{-t}, z^t+z^{-t}), \  \ \ \ \ \text{ for $t$ even, } 1< t <p-1,\\
			L_{t+1}-L_{t-1}&=(z^t+z^{-t}, -(z^t+z^{-t})),  \ \text{ for $t$ odd, } 1< t <p-1,\\
			-L_{p-2}&=(z^{p-1}+z^{-(p-1)}, z^{p-1}+z^{-(p-1)}), \text{ and}\\
			L_2&=(z+z^{-1}, -(z+z^{-1})).
		\end{aligned}
	\end{align}
	
	Since $\mathcal C$ is not super-Tannakian, by the proof of Theorem \ref{rango p-1/2} we know that the composition $$ \mathcal{K(C)} \xrightarrow{F} \mathcal K(\Ver_p)\cong \mathcal K(\Ver_p^+)\boxtimes \mathcal K(\text{sVec}) \xrightarrow{\operatorname{id}\boxtimes \operatorname{Forget}} \mathcal K(\Ver_p^+),$$  is surjective. Moreover, since we are assuming that $F:\mathcal C\to \Ver_p$ is surjective, $F(\mathcal{K(C)})$ cannot be equal to $\mathcal K(\Ver_p^+)$, and so 
	\begin{align*}
		\rank( F(\mathcal{K(C)}) ) >\frac{p-1}{2}.
	\end{align*}
	This together with Lemma \ref{suablgebras} implies that $F(\mathcal{K(C)}_{\mathbb Q})$ is identified with a subalgebra of the form $\mathbb Q(z+z^{-1})\oplus A_k$, for some $k$ that divides $\frac{p-1}{2}.$ Note that the rank of $F(\mathcal{K(C)})$  is equal to the dimension of $F(\mathcal{K(C)}_{\mathbb Q})$, and so we want to show that $k=\frac{p-1}{2}$.

	For objects $X\in \mathcal C$, their images under $F$ are objects in $\Ver_p$, and thus can be written as $\mathbb Z_{\geq 0}$ linear combinations of $L_t$'s. Then $F(\mathcal {K(C)}_{\mathbb Q})=\mathbb Q(z+z^{-1})\oplus A_k$ has a basis of elements of this form. That is, 
	\begin{align}\label{Q+A_k}
		\text{$\mathbb Q(z+z^{-1})\oplus A_k$ has a basis given by  $\mathbb Z_{\geq 0}$ linear combinations of  $L_t$'s.}
	\end{align}
	
	For the sake of contradiction, assume that $k<\frac{p-1}{2}$. 
	Since $F$ is surjective, we already know that $k>1$.
	
	Let $\mathcal O_1, \dots, \mathcal O_k$ and $x_1, \dots, x_k$ be as in \eqref{x_i basis}. Without loss of generality, we choose the labelling so that $z+z^{-1}=z^{p-1}+z^{-(p-1)}\in \mathcal O_1$. Then 
	\begin{align*}
		\{(z^i+z^{-i},0)\}_{i\text{ odd, }1 \leq i \leq p-1}\cup \{(0,x_i)\}_{i=1, \dots, k},
	\end{align*}
	is a basis for $\mathbb Q(z+z^{-1})\oplus A_k$. We start by writing this basis as a linear combination of $L_t$'s, following the identification \eqref{identifications}. Let $1<t< p-1$ even. Then we have
	\begin{align}\label{identification for z^t}
		\begin{aligned}
			2(z^t+z^{-t}, 0) &=(z^t+z^{-t}, z^t+z^{-t}) + (z^t+z^{-t}, -(z^t+z^{-t})) \\
			&=(z^t+z^{-t}, z^t+z^{-t}) + (z^{p-t}+z^{-(p-t)}, -(z^{p-t}+z^{-(p-t)}))\\
			&=L_{t+1}-L_{t-1}+L_{p-t+1}-L_{p-t-1},
		\end{aligned}
	\end{align}
	where the last equality is due to \eqref{identifications}, since $t$ is even and $p-t$ is odd. Analogously, 
	\begin{align*}
		2(z^{p-1}+z^{-(p-1)}, 0) &=-L_{p-2}+L_2.
	\end{align*}
	On the other hand, 
	\begin{align*}
		2(0,z^t+z^{-t}) &=(z^t+z^{-t}, z^t+z^{-t}) - (z^t+z^{-t}, -(z^t+z^{-t})) \\
		&=(z^t+z^{-t}, z^t+z^{-t}) - (z^{p-t}+z^{-(p-t)}, -(z^{p-t}+z^{-(p-t)}))\\
		&=L_{t+1}-L_{t-1}-L_{p-t+1}+L_{p-t-1}, \text{ and}\\
		2(0,z^{p-1}+z^{-(p-1)}) &=-L_{p-2}-L_2.
	\end{align*}
	Hence
	\begin{align}\label{identification for x_i}
		\begin{aligned}
			2(0,x_i)&=\sum\limits_{\substack{z^t +z^{-t}\in \mathcal O_i\\ t \text{ even}\\ 2\leq t <p-1}}2\left( 0,  z^t+z^{-t} \right) \\&= \sum\limits_{\substack{z^t +z^{-t}\in \mathcal O_i\\ t \text{ even}\\ 2\leq t <p-1}} (L_{t+1}-L_{t-1}-L_{p-t+1}+L_{p-t-1}), \ \ \ \text{ for $i\ne 1$,}
		\end{aligned}
	\end{align}
	and
	\begin{align*} 2(0,x_1)&=\sum\limits_{\substack{z^t +z^{-t}\in \mathcal O_1\\ t \text{ even}\\ 2\leq t <p-1}} (L_{t+1}-L_{t-1}-L_{p-t+1}+L_{p-t-1}) +(-L_{p-2}-L_2).
	\end{align*}
	
	Consider first the set $\{2(0, x_i)\}_{i=1, \dots, k}$ of basis elements of $A_k$. Let $1<s< p-1$. Note that $L_s$ appears with nonzero multiplicity in either two of these elements, with multiplicity $1$ and $-1$, respectively, or in none (since it may cancel out with itself). On the other hand, $L_{1}$ and $L_{p-1}$ appear in only one basis element (explicitly, the basis element $2(0,x_j)$ such that  $z^2+z^{-2}\in \mathcal O_j$), both with multiplicity $-1$. We will say $L_s$ is a  ``positive" summand of $x_i$ if it has multiplicity 1 in $x_i$, and is a ``negative" summand if it has multiplicity $-1$. We will also say that $L_s$ is an ``odd" summand if $1\leq  s \leq p-1$ is odd, and an ``even" summand when $s$ is even.
	
	Our assumption $k<\frac{p-1}{2}$ assures that every orbit has at least two elements. We thus claim that every $2(0,x_i)$ has at least one odd positive summand and one odd negative summand. In fact, this is clear for $i\ne 1$ since the number of odd positive summands in $2(0,x_i)$ is the same as the number of odd negative summands. For $2(0,x_1)$, the argument is the same as the one given in the proof of Theorem  \ref{rango p-1/2}. 
	
	Our aim is to construct sequences of indexes, alternating between negative odd and positive odd summands of different $2(0,x_i)$'s. We know every basis element has at least one positive and one negative odd summand. With this in mind, we begin the construction of our sequences. 
	
	Fix $1<s_0<p-1$ so that $L_{s_0}$ is an odd positive summand of some $(0,x_{j_0})$. Since $k>1$, then there exists $j_1\ne j_0$ such that $L_{s_0}$ is an odd negative summand of $(0,x_{j_1})$. By our preceding discussion, there must exist an odd positive summand $L_{s_1}$ of $x_{j_1}$, $s_1\ne 1$ ($L_{1}$ can only be a negative summand). 
	Thus $L_{s_1}$ must be an odd negative summand of some $(0,x_{j_2})$, with $j_2\ne j_1$. Again, there exists some $L_{s_2}$ odd positive summand of $(0,x_{j_2})$, $s_2\ne 1$.  Recursively, we can construct sequences of indexes $\{s_t\}$ and $\{j_t\}$ such that $1<s_t<p-1$ is odd,  $j_t\ne j_{t+1}$ for all $t$, and $L_{s_t}$ is an odd positive summand of  $x_{j_{t}}$ and an odd negative summand of $x_{j_{t+1}}$. Since there are only finitely many $\{(0,x_i)\}$, the indexes $j_t$ must repeat at some point. Without loss of generality, assume $j_1$ is the first one that repeats, so our sequence is $\{j_1, j_2, j_3, \dots, j_n, j_{n+1}=j_1, \dots \}$, for some $n\geq 2$.

	We now use our sequences of indexes to show that elements of $\mathbb Q(z+z^{-1})\oplus A_k$  that can be written as a positive linear combination of $L_t$'s are contained in a subspace of dimension strictly less than $\dim(\mathbb Q(z+z^{-1})\oplus A_k)=\frac{p-1}{2}+k.$
	
	Consider now the basis $$\{2(z^i+z^{-i},0)\}_{i\text{ odd, }1 \leq i \leq p-1}\cup \{2(0,x_i)\}_{i=1, \dots, k},$$ of $\mathbb Q(z+z^{-1})\oplus A_k$. Let 
	\begin{align}\label{equation for i}
		y:=\left( \sum\limits_{i \text{ odd }} a_i 2(z^i+z^{-i}), \sum\limits_{j=1}^k b_j 2x_j\right)\in \mathbb Q(z+z^{-1})\oplus A_k, 
	\end{align}
	so that $y$ that can be written as a positive linear combination of $L_t$'s under the identification \ref{identifications}. We show that $y $ is in the subspace generated by 
	\begin{align}\label{positive subspace}
		\{2(z^i+z^{-i},0)\}_{i\text{ odd, }1 \leq i \leq p-1}\cup \{2(0,x_i)\}_{i\ne j_1, \dots, j_n}\cup \{2(0,x_{j_1}+\dots +x_{j_n})\}.
	\end{align}
	We do this by computing the multiplicities of $L_{s_t}$ and $L_{p-s_t}$ in \eqref{equation for i}, for all $t=1, \dots, n$. Note that, if $L_s$ is an odd positive (respectively, negative) summand of $2(0,x_i)$, then $L_{p-s}$ is an even positive (respectively, negative) summand of $2(0,x_i)$, see Equation \eqref{identification for x_i}.

	Recall that $L_{s_1}$ is an odd positive summand of $2(0,x_{j_1})$, and an odd negative summand of $2(0,x_{j_2})$. Also, $L_{s_1}$ is a positive summand of $2(z^{s_1-1}+z^{-(s_1-1)},0)$ and a negative summand of $2(z^{s_1+1}+z^{-(s_1+1)},0)$, see Equation \eqref{identification for z^t}. Hence the multiplicity  of $L_{s_1}$ in $\eqref{equation for i}$ under the identifications \eqref{identification for z^t} and \eqref{identification for x_i} is
	\begin{align}\label{first multiplicity}
		b_{j_1}-b_{j_2}+a_{s_1-1}-a_{s_1+1}\geq 0.
	\end{align}
	On the other hand, $L_{p-s_1}$ is an even positive summand of $2(0,x_{j_1})$, and an even negative summand of $2(0,x_{j_2})$. But $L_{p-s_1}$ is a negative summand of $2(z^{s_1-1}+z^{-(s_1-1)},0)$ and a positive summand of $2(z^{s_1+1}+z^{-(s_1+1)},0)$, see Equation \eqref{identification for z^t}. Hence the multiplicity  of $L_{p-s_1}$ in $\eqref{equation for i}$  under the identifications \eqref{identification for z^t} and \eqref{identification for x_i} is
	\begin{align}\label{second multiplicity}
		b_{j_1}-b_{j_2}-a_{s_1-1}+a_{s_1+1}\geq 0.
	\end{align}
	Now, equations \eqref{first multiplicity} and \eqref{second multiplicity} imply that
	\begin{align*}
		b_{j_1}\geq b_{j_2}.
	\end{align*}
	Analogously, for $1\leq i \leq n$ we have that $L_{s_i}$ has multiplicity 
	\begin{align*}
		b_{j_{i}}-b_{j_{i+1}}+a_{s_i-1}-a_{s_i+1}\geq 0,
	\end{align*}
	in \eqref{equation for i}, and $L_{p-s_i}$ has multiplicity 
	\begin{align*}
		b_{j_{i}}-b_{j_{i+1}}-a_{s_i-1}+a_{s_i+1}\geq 0,
	\end{align*}
	which implies 
	\begin{align*}
		b_{j_{i}}\geq b_{j_{i+1}}.
	\end{align*}
	Hence, since $j_{n+1}=j_1$, we have that
	\begin{align*}
		b_{j_1}\geq  b_{j_2}\geq \dots\geq b_{j_n}\geq b_{j_{n+1}}=b_{1},
	\end{align*}
	which implies $b_{j_1}=b_{j_{2}}=\dots = b_{j_n}$, as desired. 
	
	Consequently, elements of $\mathbb Q(z+z^{-1})\oplus A_k$  that can be written as a positive linear combination of $L_t$'s are contained in the subspace \eqref{positive subspace}, which has dimension strictly less than $\dim(\mathbb Q(z+z^{-1})\oplus A_k),$ since $n\geq 2$. This contradicts \eqref{Q+A_k}, and the contradiction came from assuming $k<\frac{p-1}{2}$. Thus we must have $k=\frac{p-1}{2}$, and so 
	\begin{align}\label{iso for the image}
		F(\mathcal{K(C)}_{\mathbb Q})\cong \mathbb Q(z+z^{-1})\oplus A_k =\mathbb Q(z+z^{-1})\oplus \mathbb Q(z+z^{-1}),
	\end{align}
as $\mathbb Q$-algebras.
	Lastly, 
	\begin{align}\label{rank inequality}
		\rank(\mathcal{K(C)})\geq \rank(F(\mathcal{K(C)}) )= \dim(F(\mathcal{K(C)}_{\mathbb Q}))=\dim(\mathbb Q(z+z^{-1})\oplus \mathbb Q(z+z^{-1}))= p-1,
	\end{align}
	which finishes the proof.
\end{proof}

\begin{corollary}\label{groth rings of image}
	Let $p\geq 5$, and let $\mathcal C$ be a symmetric fusion category that is not super Tannakian. Let $F:\mathcal C\to \Ver_p$ be the Verlinde fiber functor. Then 
	\begin{align*}
		F(\mathcal{K(C)})= \mathcal K(\Ver_p) \ \ \text{ or } \ \ 	F(\mathcal{K(C)})= \mathcal K(\Ver_p^+).
	\end{align*}
	In particular,
	\begin{align*}
&&F(\mathcal{K(C)}_{\mathbb Q})\cong \mathbb Q(z+z^{-1}) ^{\oplus 2}&&\text{or} &&F(\mathcal{K(C)}_{\mathbb Q})\cong  \mathbb Q(z+z^{-1}).
	\end{align*}
\end{corollary}

\begin{proof}
The image of the functor $F:\mathcal C \to \Ver_p$ is a  fusion subcategory of $\Ver_p$, thus it can only be $\operatorname{Vec}, \operatorname{sVec}, \Ver_p^+$ or $\Ver_p$. The first two choices are not possible since we are assuming that $\mathcal C$ is not super Tannakian.
	
	Suppose first that the image is $\Ver_p$. Then $F$ is surjective, and so by the proof of  Theorem \ref{theo: p-1} we have that $\rank(F(\mathcal{K(C)})=p-1$, see Equation \eqref{rank inequality}, which implies $F(\mathcal K(C))=\mathcal K(\Ver_p)$. Also  $F(\mathcal{K(C)}_{\mathbb Q})\cong \mathbb Q(z+z^{-1})^{\oplus 2}$ by  Equation \eqref{iso for the image}.
	
	Suppose now that the image of $F$ is $\Ver_p^+$. Then the induced ring homomorphism $F:\mathcal{K(C)}\to \mathcal K(\Ver_p)$ has image contained in $\mathcal K(\Ver_p^+)$, which implies
	\begin{align}\label{ineq 1}
		\rank(F(\mathcal{K(C)})\leq \rank(\mathcal K(\Ver_p^+))=\frac{p-1}{2}.
	\end{align}
On the other hand, by the proof of Theorem \ref{rango p-1/2}, we know that the functor
	\begin{align*}
		\tilde F:= (\operatorname{id}\boxtimes \operatorname{Forget}) \circ F: \mathcal C\to \Ver_p^+,
	\end{align*}
induces a surjective homomorphism  of $\mathbb Q$-algebras 
\begin{align}\label{ineq 2}
	\tilde F : \mathcal{K(C)}_{\mathbb Q} \twoheadrightarrow \mathcal K(\Ver_p^+)_{\mathbb Q}\cong \mathbb Q(z+z^{-1}).
\end{align}
Then
\begin{align*}
\frac{p-1}{2}\geq	\rank(F(\mathcal{K(\mathcal C)}))=\dim(F(\mathcal K(\mathcal C)_{\mathbb Q})) \geq \dim(\tilde F(\mathcal K(\mathcal C)_{\mathbb Q}))=\frac{p-1}{2},
\end{align*}
	where the first inequality is due to Equation \eqref{ineq 1} and the last to Equation \eqref{ineq 2}. This implies that $\rank(F(\mathcal{K(C)}))=\frac{p-1}{2}$,  and thus $F(\mathcal{K(C)})=\mathcal K(\Ver_p^+)$. In particular, this implies that $F(\mathcal{K(C)}_{\mathbb Q})\cong \mathbb Q(z+z^{-1})$.
\end{proof}

\section{Some properties of the Adams operation}

Throughout this section, we assume $p>2$. 

\label{section: Adams operation}
 \subsection{Adams operation in $\Ver_p$}
Recall that the Adams operation of a symmetric fusion category $\mathcal C$ is defined as the ring endomorphism $\psi_2:\mathcal{K(C)} \to \mathcal{K(C)}$ given by $$\psi_2(X)=S^2(X)-\Lambda^2(X),$$ for all $X\in \mathcal{K(C)}$, see Section \ref{subsection:Adams operation}. We note that, for an object $X$ in $\mathcal C$, $\psi_2(X)$ is in $\mathcal{K(C)}$, and thus $\psi_2(X)$ is not necessarily a linear combination (of simple objects) with non-negative coefficients. 

In this section we study some  properties of the Adams operation in $\Ver_p$. We first give an explicit formula for the second Adams operation on simple objects $L_t$, $1\leq t \leq p-1$. We then use this formula to show that if an object $X$ in $\Ver_p$ is fixed by the Adams operation, then $X$ is in the abelian subcategory $\operatorname{Vec}$ generated by $\mathbf 1=L_1$.

\begin{remark}\label{image of psi 2 isodd}
	The image of $\psi_2:\mathcal{K}(\Ver_p)\to \mathcal{K}(\Ver_p)$ is contained in $\mathcal{K}(\Ver_p^+)$. In fact, 
	\begin{align*}
		L_t^2=\sum\limits_{s=1}^{\min(t,p-t)} L_{2s-1} = S^2(L_t) + \Lambda^2(L_t), 
	\end{align*}
	for all $i=1, \dots, p-1$, and so the multiplicity of even simples is zero in both $S^2(L_i)$ and $\Lambda^2(L_i).$
\end{remark}

Note that to compute $\psi_2$ on simple objects $L_r$ of $\Ver_p$, it is enough to compute it for $r$ odd, since $$\psi_2(L_r)=-\psi_2(L_{p-r}).$$ This follows from 
\begin{align*}
	\Lambda^2 L_r = L_{p-1}^2 \otimes S^2 L_{p-r}= S^2 L_{p-r},
\end{align*}
see \cite[Proposition 2.4]{EOV}.
\begin{example}
	In $\Ver_5$, $$\psi_2(L_3)=L_1-L_3=-\psi_2(L_2).$$
	In fact, we know that $\psi_2(L_3) = S^2(L_3)- \Lambda^2(L_3)$ and $L_3^2=L_1+L_3= S^2(L_3)+\Lambda^2(L_3)$. Hence there must exist $\epsilon_1, \epsilon_3 \in \{\pm 1\}$ such that $\psi_2(L_3)=\epsilon_1 L_1 + \epsilon_3 L_3$. Now, 
	\begin{align*}
		L_1 + 2\epsilon_1 \epsilon_3 L_3 + L_3^2 =\psi_2(L_3)^2= \psi_2(L_3^2)= (1+\epsilon_1)  L_1 + \epsilon_3 L_3,
	\end{align*} 
	and so $2= 1+\epsilon_1$, which implies $\epsilon_1=1$. It follows that $\epsilon_3=-1$.  
\end{example}

\begin{proposition}\label{adams op formula}
	The second Adams operation $\psi_2:\mathcal K(\Ver_p)\to \mathcal K(\Ver_p)$ is given by 
	\begin{align*}
	&	\psi_2(L_t)=\sum\limits_{s=1}^{\min(t,p-t)}(-1)^{s+1} L_{2s-1} \text{ for $t$ odd, } 1 \leq t \leq p-1, \text{ and} \\
&	\psi_2(L_t)=\sum\limits_{s=1}^{\min(t,p-t)}(-1)^{s} L_{2s-1} \text{ for $t$ even, } 1 \leq t \leq p-1.
	\end{align*}
\end{proposition}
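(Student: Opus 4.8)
The plan is to argue by induction on $t$, exploiting that $\psi_2$ is a ring endomorphism together with the Verlinde fusion rules, so that the whole computation is governed by the single Chebyshev-type recursion satisfied by the classes $L_t$. First I would cut the work in half: since $p$ is odd, $t$ and $p-t$ have opposite parities, so by the relation $\psi_2(L_r)=-\psi_2(L_{p-r})$ recalled before the statement (which comes from \cite[Proposition 2.4]{EOV}) it suffices to prove the formula for $t$ odd. The even case then follows by replacing $t$ with $p-t$ and flipping every sign, which is precisely the passage from $(-1)^{s+1}$ to $(-1)^{s}$ while leaving $\min(t,p-t)$ unchanged. Moreover, by Remark \ref{image of psi 2 isodd} every class in sight lies in $\mathcal K(\Ver_p^+)$, so I may work throughout in the basis $e_s:=L_{2s-1}$, $1\le s\le \tfrac{p-1}{2}$, of odd-index simples.

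Next I set up the induction. From the Verlinde fusion rules, $L_2\otimes L_t\simeq L_{t-1}\oplus L_{t+1}$ for $2\le t\le p-2$, that is $L_{t+1}=L_2 L_t-L_{t-1}$ in $\mathcal K(\Ver_p)$; these relations generate all of $L_1,\dots,L_{p-1}$ from $L_1,L_2$. Applying the ring homomorphism $\psi_2$ gives
\[
\psi_2(L_{t+1})=\psi_2(L_2)\,\psi_2(L_t)-\psi_2(L_{t-1}),\qquad 2\le t\le p-2 .
\]
The base cases $\psi_2(L_1)=L_1$ and $\psi_2(L_2)=L_3-L_1$ are computed directly: the first from $S^2(\mathbf 1)=\mathbf 1$ and $\Lambda^2(\mathbf 1)=0$, and the second from $L_2^2=L_1+L_3$ together with the self-consistency $\psi_2(L_2)^2=\psi_2(L_2^2)$, exactly as in the preceding Example. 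It then remains to check that the alternating sum on the right-hand side of the statement, call it $M_t$ (for both parities), satisfies the same initial conditions and the same recursion; uniqueness of the solution then forces $\psi_2(L_t)=M_t$.

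The computational heart is the identity $M_{t+1}=(L_3-L_1)M_t-M_{t-1}$. For this I would first record how multiplication by $\psi_2(L_2)=L_3-L_1$ acts on the basis $\{e_s\}$: from $L_3\otimes L_{2s-1}\simeq L_{2s-3}\oplus L_{2s-1}\oplus L_{2s+1}$ one obtains $(L_3-L_1)\,e_s=e_{s-1}+e_{s+1}$ in the interior, with the anomalous values $(L_3-L_1)\,e_1=e_2-e_1$ at the bottom and a truncation near the top term $e_{(p-1)/2}=L_{p-2}$. Feeding this ``shift up plus shift down'' into the alternating sign pattern of $M_t$, the interior contributions telescope and reproduce $M_{t+1}$, the bottom correction accounting for the fact that $L_1$ occurs only with a negative sign. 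I expect the main obstacle to be exactly this boundary bookkeeping: the bottom edge $s=1$, where $\mathbf 1=L_1$ behaves anomalously, and — more delicately — the passage of $t$ through $\tfrac{p-1}{2}$, where $\min(t,p-t)$ stops growing and begins to shrink and the products $L_3\otimes L_{2s-1}$ lose their top summand; one must verify that the alternating sums still close up across this transition.

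As an independent check of the answer, and as a cleaner conceptual route, I would note that $\psi_2$ restricts to a $\mathbb Q$-algebra endomorphism of the field $\mathcal K(\Ver_p^+)_{\mathbb Q}\simeq \mathbb Q(z+z^{-1})$ from the proof of Theorem \ref{rango p-1/2}, hence is an element of the Galois group $\mathcal G$. Evaluating on the generator $z^2+z^{-2}=L_3-L_1$ using $\psi_2(L_2)=L_3-L_1$ shows $\psi_2(z^2+z^{-2})=z^4+z^{-4}$, so $\psi_2$ is the element $\bar 2\in\mathcal G$, acting by $z^a+z^{-a}\mapsto z^{2a}+z^{-2a}$. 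Re-expanding $\sum_{l=1}^{j}(z^{4l}+z^{-4l})+1$ in the $L$-basis, with the reduction of exponents modulo $p$ producing precisely the $\min(t,p-t)$ truncation, recovers the stated alternating formula for $t=2j+1$.
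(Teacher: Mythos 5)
Your overall architecture (reduce to odd $t$, run a Chebyshev-type induction on the fusion recursion, with the Galois-theoretic identification as a check) is close in spirit to the paper's proof, but there is a genuine gap at the base case: the claim that $\psi_2(L_2)=L_3-L_1$ follows from $L_2^2=L_1+L_3$ and the self-consistency $\psi_2(L_2)^2=\psi_2(L_2^2)$ ``exactly as in the preceding Example'' is false. The Example pins down $\psi_2(L_3)$ in $\Ver_5$ because $L_3$ occurs in $L_3^2$, so the constraint $v^2=L_1+v$ is not invariant under $v\mapsto -v$. For $L_2$ with $p\geq 7$ the constraint reads $\psi_2(L_2)^2=L_1+\psi_2(L_3)$, which \emph{is} invariant under flipping the sign of $\psi_2(L_2)$; it yields $\psi_2(L_2)=\pm(L_3-L_1)$ and $\psi_2(L_3)=L_1-L_3+L_5$, but nothing more. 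Indeed, no Grothendieck-ring argument can do better: if $\tau$ denotes the parity automorphism of $\mathcal K(\Ver_p)$ sending $L_t\mapsto(-1)^{t+1}L_t$ (identity on the odd-index part, minus identity on the even-index part; this is a ring automorphism by the $\mathbb Z_2$-grading of the fusion rules), then $\psi_2\circ\tau$ is again a ring endomorphism satisfying $\phi(X)\equiv X^2\bmod 2$, with coefficients $\pm1$ on the support of each $L_t^2$, and it sends $L_2\mapsto L_1-L_3$. Since your key recursion $\psi_2(L_{t+1})=\psi_2(L_2)\psi_2(L_t)-\psi_2(L_{t-1})$ uses this sign as the multiplier, the wrong choice flips the sign of $\psi_2(L_t)$ for every even $t$ and produces the negative of the stated formula there. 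Breaking the $S^2/\Lambda^2$ symmetry requires genuinely categorical input; the paper supplies it by computing $\dim S^2(L_{p-2})\equiv 1$ and $\dim\Lambda^2(L_{p-2})\equiv 3 \pmod p$, forcing $S^2(L_{p-2})=L_1$ and $\Lambda^2(L_{p-2})=L_3$, hence $\psi_2(L_{p-2})=L_1-L_3$ and $(\psi_2)_{\mathbb Q}(z+z^{-1})=z^2+z^{-2}$.

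The gap is repairable with tools already in your write-up. Since the odd-index values are insensitive to the sign choice (replacing $\psi_2(L_2)$ by its negative replaces $\psi_2(L_t)$ by $(-1)^{t+1}\psi_2(L_t)$ throughout the recursion), your induction does establish the odd-$t$ formula unconditionally; in particular $\psi_2(L_{p-2})=L_1-L_3$, and then the relation $\psi_2(L_2)=-\psi_2(L_{p-2})$ from \cite[Proposition 2.4]{EOV} --- which is categorical input precisely because it distinguishes $S^2$ from $\Lambda^2$ --- gives both the sign of $\psi_2(L_2)$ and the even case. Equivalently, run your closing Galois argument through the sign-unambiguous value $\psi_2(L_3)=L_1-L_3+L_5$, giving $\psi_2(z^2+z^{-2})=L_5-L_3=z^4+z^{-4}$, rather than through $\psi_2(L_2)$. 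Finally, note that you flag the boundary bookkeeping at the transition $t=\frac{p-1}{2}$ as ``the main obstacle'' but never carry it out; the paper sidesteps exactly this by working in $\mathbb Q(z+z^{-1})$, where the identifications $z^t+z^{-t}=L_{t+1}-L_{t-1}$ and $z^{p-1}+z^{-(p-1)}=-L_{p-2}$ turn each induction step into adding a single term, with separate upward and downward inductions on the two sides of $\frac{p-1}{2}$.
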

\begin{proof}
Note that the second formula follows from the first by the equality $\psi_2(L_r)=-\psi_2(L_{p-r})$. We will use the  isomorphism of $\mathbb Q$-algebras $$\mathcal K(\Ver_p^+)_{\mathbb Q}\cong \mathbb Q(z+z^{-1}),$$ for $z$ a primitive $p$-th root of unity, see \cite[Theorem 4.5 (iv)]{BEO} and Section \ref{section: bounds}. Consider the basis $\left\{z^{2i}+z^{-2i}\right\}_{i=1, \dots, \frac{p-1}{2}}$ of $\mathbb Q(z+z^{-1})$.  Via this isomorphism, we have identifications
	\begin{align*}
		L_{2j+1}=\sum\limits_{l=1}^j (z^{2l}+z^{-2l})+1, \ \ \text{for } j=0, \dots, (p-3)/2,
	\end{align*}
	from which we compute
	\begin{align}\label{identification}
		\begin{aligned}
		&	z^t+z^{-t}\ =\  L_{t+1}-L_{t-1} \ \ \text{for } t \text{ even,} \  2\leq t<p-1, \text{ and}\\ 		& z^{p-1}+z^{-(p-1)} = -L_{p-2},
		\end{aligned}
	\end{align}
	see Section \ref{section: bounds} and the proof of Theorem \ref{rango p-1/2} for details. 
	
	Note that $(\psi_2)_{\mathbb Q} :\mathbb Q(z+z^{-1})\to \mathbb Q(z+z^{-1})$ maps $z+z^{-1}=z^{p-1}+z^{-(p-1)}$ to $z^2+z^{-2}$.
	In fact, we compute
	\begin{align*}
		\dim(S^2(L_{p-2}))&=\frac{(p-2)(p-1)}{2}=1\mod p,  \ \ \text{and} \\ \dim(\Lambda^2(L_{p-2}))&=\frac{(p-2)(p-3)}{2}=3\mod p.
	\end{align*}
	Since $S^2(L_{p-2})+\Lambda^2(L_{p-2})=L_{p-2}^2=L_1+L_3$, and $L_1$ can appear in either $S^2(L_{p-2})$ or $\Lambda^2(L_{p-2})$ but not both, then it must be the case that $S^2(L_{p-2})=L_1$ and $\Lambda^2(L_{p-2})=L_3$. Thus using identification \eqref{identification} we get 
	\begin{align}\label{psi_2 at z+z^-1}
	\psi_2(z^{p-1}+z^{-(p-1)})=-	\psi_2(L_{p-2})=-S^2(L_{p-2})+\Lambda^2(L_{p-2})= L_3-L_1=z^2+z^{-2},
	\end{align}
as desired. In particular, this implies 
\begin{align*}
	\psi_2(z^m+z^{-m})=z^{2m}+z^{-2m}, \ \ \ \text{for all } 1\leq m\leq p-1.
\end{align*}
We prove now our formulas for $\psi_2(L_t)$, $t$ odd, by induction. We do the case $1\leq t \leq \frac{p-1}{2}$ first. We know $\psi_2(L_1)=L_1$, so the formula works for $t=1$. 
Fix $1 < t \leq \frac{p-1}{2}$ odd, and suppose the formula is true for all odd $1\leq r<t$. Then
\begin{align*}
\psi_2(z^{t-1}+z^{-(t-1)})=z^{2(t-1)}+z^{-2(t-1)}= L_{2(t-1)+1}-L_{2(t-1)-1}=L_{2t-1}-L_{2t-3},
\end{align*} 
where in the second equality we are using the identification \eqref{identification}. Since $z^{t-1}+z^{-(t-1)}=L_{t}-L_{t-2}$, we compute using induction
\begin{align*}
	\psi_2(L_t)&=\psi_2(L_{t-2})+L_{2t-1}-L_{2t-3}\\
	&=\sum\limits_{s=1}^{t-2} (-1)^{s+1}L_{2s-1}+L_{2t-1}-L_{2t-3}\\
	&=\sum\limits_{s=1}^{t} (-1)^{s+1}L_{2s-1}=\sum\limits_{s=1}^{\min(t,p-t)} (-1)^{s+1}L_{2s-1},
\end{align*}
and so the formula holds for $t$. 

It remains to show that the formula holds for the case of odd  $\frac{p-1}{2}< t< p-1$ . We already computed $\psi_2(L_{p-2})=L_1-L_3$, so it works for $t=p-2$. Fix odd $\frac{p-1}{2}< t=p-l< p-1$ and assume the formula holds for all odd $t< r< p-1$. Since $z^{p-l+1}+z^{-(p-l+1)}= L_{p-(l-2)}-L_{p-l}$, we compute same as before
\begin{align*}
	\psi_2(L_{p-l})&=\psi_2(L_{p-(l-2)})-(L_{2l-1}-L_{2l-3})\\
	&=\sum\limits_{s=1}^{l-2} (-1)^{s+1} L_{2s-1}-L_{2l-3}+L_{2l-1}\\
	&=\sum\limits_{s=1}^{l} (-1)^{s+1} L_{2s-1}=\sum\limits_{s=1}^{\min(t,p-t)} (-1)^{s+1}L_{2s-1},
\end{align*}
as desired. 
\end{proof}

\medbreak
We now study objects in $\Ver_p$ that are fixed by the second Adams operation. 
For a simple object $X$ in a symmetric fusion category $\mathcal C$, we denote by $[Y:X]$ the multiplicity of $X$ in $Y$ for all $Y\in \mathcal C$.
\begin{corollary}\label{psi 2 en Verp}
	An object $X\in \Ver_p$ is fixed by $\psi_2$ if and only if $X\in \operatorname{Vec}$.
\end{corollary}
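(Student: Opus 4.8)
The plan is to prove the two implications separately, with the forward direction being immediate and the reverse direction resting on the explicit formula of Proposition \ref{adams op formula} combined with the fact that an honest object has \emph{non-negative} coefficients. For the easy direction, if $X\in\operatorname{Vec}$ then $X=n\mathbf{1}=nL_1$ for some $n\geq 0$, and since $\psi_2$ is a ring endomorphism it fixes the unit, so $\psi_2(X)=X$ (this can also be read off from Proposition \ref{adams op formula} at $t=1$, where $\min(t,p-t)=1$ gives $\psi_2(L_1)=L_1$).

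For the converse I would write $X=\sum_{t=1}^{p-1} n_t L_t$ with all $n_t\in\mathbb{Z}_{\geq 0}$ and assume $\psi_2(X)=X$. First, by Remark \ref{image of psi 2 isodd} the image of $\psi_2$ is contained in $\mathcal{K}(\Ver_p^+)$, so $X=\psi_2(X)$ has no even-indexed simples; hence $n_t=0$ for every even $t$ and the sum may be taken over odd $t$ only. Substituting the formula of Proposition \ref{adams op formula} for odd $t$ and exchanging the order of summation, the plan is to rewrite $\psi_2(X)=\sum_{s}(-1)^{s+1}N_s\,L_{2s-1}$, where $N_s=\sum_{t\text{ odd},\, s\leq t\leq p-s} n_t$ collects the coefficients of all simples whose formula reaches index $2s-1$ (using that $\min(t,p-t)\geq s$ is equivalent to $s\leq t\leq p-s$).

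The crux is then to compare the coefficient of $L_{2s-1}$ on the two sides of $\psi_2(X)=X$, namely $n_{2s-1}=(-1)^{s+1}N_s$, and to exploit the alternating sign. For even $s$ this sign is negative, so the identity reads $n_{2s-1}=-N_s$ with both sides non-negative, forcing $N_s=0$. Already at $s=2$ this gives $N_2=\sum_{t\text{ odd},\, 3\leq t\leq p-2} n_t=0$, i.e.\ $n_t=0$ for every odd $t\geq 3$; together with the vanishing of all even coefficients this leaves $X=n_1L_1\in\operatorname{Vec}$, which is what we want.

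The point requiring the most care is not a technical obstacle but a conceptual one: the conclusion genuinely fails for arbitrary elements of $\mathcal{K}(\Ver_p)$, as the $p=17$ example in the Remark shows, because the $\psi_2$-fixed sublattice is strictly larger than $\operatorname{Vec}$ --- it is the fixed part of the doubling Galois action $z^a+z^{-a}\mapsto z^{2a}+z^{-2a}$ on $\mathbb{Q}(z+z^{-1})$. Thus no purely Galois-theoretic or linear-algebraic argument can succeed, and the proof must use positivity of the coefficients $n_t$ of an actual object. The argument above is arranged precisely so that positivity collides with the single negative sign occurring at $s=2$, collapsing the entire odd part in one step.
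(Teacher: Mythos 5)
Your proof is correct and is essentially the paper's own proof: both arguments kill the even-indexed coefficients via Remark \ref{image of psi 2 isodd}, then combine the explicit formula of Proposition \ref{adams op formula} with the non-negativity of the coefficients of an honest object, comparing a single coefficient in the identity $\psi_2(X)=X$. The only (immaterial) difference is which coefficient is compared: you use $L_3$, where the sign $(-1)^{s+1}$ at $s=2$ yields $n_3=-N_2$ and positivity forces $N_2=0$, whereas the paper uses $L_1$, where every odd simple contributes $+1$, yielding $a_1=\sum_{j=1}^{(p-1)/2}a_{2j-1}$ and hence $a_3=\dots=a_{p-2}=0$; either choice collapses the odd part in one step.
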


\begin{proof}
	Let $X\in \Ver_p$ such that $\psi_2(X)=X$ and let $a_1, \dots, a_{p-1}$ be non-negative integers such that $X=\sum\limits_{j=1}^{p-1} a_j L_j$. 
Since the image of $\psi_2$ is contained in $\mathcal K(\Ver_p^+)$ (see Remark \ref{image of psi 2 isodd})	then $a_2=a_4=\dots=a_{p-1}=0$.

Using the formulas from Proposition \ref{adams op formula} we compute
	\begin{align*}
		a_1 = [	X:L_1] =[	\psi_2(X):L_1] =\sum\limits_{j=1}^{\frac{p-1}{2}}  a_{2j-1}.
	\end{align*}
	Since $a_i$ is non-negative for all $i$ this implies $a_3=a_5=\dots=a_{p-2}=0$, as desired. 
\end{proof}

\begin{remark}
	The statement of Corollary \ref{psi 2 en Verp} only works for actual objects in $\Ver_p$, that is, objects that can be written as $\mathbb Z_{\geq 0}$ linear combinations of $L_1, \dots, L_{p-1}$. There can exist objects in $\mathcal K(\Ver_p)$ that are fixed by the second Adams operation but are not multiples of $L_1$. For example, consider $p=17$ and $L_5-L_7+L_9-L_{15}\in \mathcal K(\Ver_p)$. Then
	\begin{align*}
		\psi_2(L_5-L_7+L_9-L_{15})&=\psi_2(L_5)-\psi_2(L_7)+\psi_2(L_9)-\psi_2(L_{15})\\ &=(L_1-L_3+L_5-L_7+L_9)-(L_1-L_3+L_5-L_7+L_9-L_{11}+L_{13})+\\&+(L_1-L_3+L_5-L_7+L_9-L_{11}+L_{13}-L_{15})-(L_1-L_3)\\&=L_5-L_7+L_9-L_{15}
	\end{align*}
	and so $L_5-L_7+L_9-L_{15}$ is fixed by $\psi_2$ but is not in $\operatorname{Vec}$.
\end{remark}

\subsection{Powers of the Adams operation}

Recall that in this section we assume $p>2$. Here we classify symmetric fusion categories $\mathcal C$ such that $\psi_2^a=\psi_2^{a-1}$ for some $a\geq 1$. Namely, we show that such categories  are super-Tannakian and thus classified by group data, see \cite{DM, D, DG}. Moreover, we show that the case $a=1$ is only possible for the trivial category. That is, we prove that if $\psi_2=\text{Id}$ in $\mathcal{K(C)}$ then $\mathcal C=\operatorname{Vec}$.

	\begin{theorem}\label{remark on powers of psi}
	Let $p>2$ and let $\mathcal C$ be a non-super-Tannakian symmetric fusion category. If  the Adams operation $\psi_2: \mathcal{K(C)}\to \mathcal{K(C)}$ satisfies $\psi_2^a=\psi_2^b$ for some $a, b\in \mathbb Z_{\geq 0}$, then $2^a\equiv \pm 2^b \mod p$.
\end{theorem}

\begin{proof}
	Consider the fiber functor $F:\mathcal C \to \Ver_p$; we denote also by $F$ the induced ring homomorphism $\mathcal{K(C)}\to \mathcal K(\Ver_p)$. 
	Suppose now that $a>1$.  Since $F$ preserves the symmetric structure, we have that 
	\begin{align*}
		\psi_2^{a}(F(X)) = F(\psi_2^{a}(X))=F(\psi_2^{b} (X)) =\psi_2^{b}(F(X)), \text{   for all } X\in \mathcal{K(C)}. 
	\end{align*}
	That is, $\psi_2^{a}=\psi_2^{b}$ on the image of $\mathcal{K(C)}$ under $F$. Suppose $\mathcal C$ is not super  Tannakian. In particular, this implies $p>3$, since for $p=3$ all symmetric fusion categories are super Tannakian. By Corollary \ref{groth rings of image}, we know that $F(\mathcal{K(C)}_{\mathbb Q})$ is isomorphic as a $\mathbb Q$-algebra to $\mathbb Q(z+z^{-1})$ or $\mathbb Q(z+z^{-1})^{\oplus 2}$. Recall that $\psi_2(z+z^{-1})=z^2+z^{-2}$, see \eqref{psi_2 at z+z^-1}. Then $(\psi_2^a)_{\mathbb Q}= (\psi_2^{b})_{\mathbb Q}$ in $F(\mathcal{K(C)}_{\mathbb Q})$ would imply that $z^{2^a}+z^{-2^a}=z^{2^{b}}+z^{-2^{b}}$, and so $2^{a}\equiv \pm 2^{b} \mod p$.
\end{proof}

\begin{corollary}\label{tannakian}
	Let $p> 2$ and let $\mathcal C$ be a symmetric fusion category. If  $\psi_2^a=\psi_2^{a-1}$ for some $a\in \mathbb Z_{\geq 1}$, then $\mathcal C$ is super-Tannakian.
\end{corollary}
\begin{proof}
	By Theorem \ref{remark on powers of psi}, if $\mathcal C$ is not super Tannakian then $2^{a}\equiv \pm  2^{a-1}\mod p$, which implies $2\equiv \pm 1 \mod p$, a contradiction.
\end{proof}

\begin{remark}\label{remark:psi = id}
	Let $p>2$. If $\psi_2:\mathcal K(C)\to \mathcal{K(C)}$ satisfies $\psi_2=\text{id}$, then  $\mathcal C$ is actually Tannakian. In fact, since $F$ preserves the symmetric structure, we have that 
	\begin{align*}
		\psi_2(F(X)) = F(\psi_2(X))=F(X), 
	\end{align*}
	for all $X\in \mathcal C$. Since $F(X) \in \Ver_p$ and $\psi_2$ fixes $F(X)$, then by Corollary \ref{psi 2 en Verp} we have that $F(X)\in \text{Vec}$ for all $X\in \mathcal C$, and so $\mathcal C$ is Tannakian, as desired.  
\end{remark}

	However, we show next that $\psi_2=\text{id}$ is only possible for $\mathcal C=\operatorname{Vec}$.

\begin{theorem}\label{psi not identity} Let $p \ne  2$. 
	If $\mathcal C$ is a non-trivial symmetric fusion category  then $\psi_2$ is not the identity. 
\end{theorem}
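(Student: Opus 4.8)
The plan is to turn the hypothesis $\psi_2 = \mathrm{Id}$ into group-theoretic data via Ostrik's theorem and then show the underlying group scheme is trivial. Arguing by contradiction, suppose $\mathcal C$ is nontrivial with $\psi_2 = \mathrm{Id}$. Since $\psi_2^1 = \psi_2^0 = \mathrm{Id}$, Theorem \ref{tannakian} applies with $a = 1$; moreover the $a=1$ case of its proof shows $FX \in \operatorname{Vec}$ for every $X$, so by Corollary \ref{psi 2 en Verp} the category $\mathcal C$ is in fact Tannakian. Thus $\mathcal C \simeq \Rep_{\mathbf k}(G)$ for a linearly reductive finite group scheme $G$, and it remains to prove $G = 1$.

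First I would record two consequences of $\psi_2 = \mathrm{Id}$. For any invertible object $L$ we have $\Lambda^2 L = 0$, hence $\psi_2(L) = S^2 L = L^{\otimes 2}$; the relation $L^{\otimes 2} = L$ in $\mathcal{K(C)}$ then forces $L \simeq \mathbf 1$, so $\mathcal C$ has no nontrivial invertible objects. Next I would reduce to an ordinary group. By Nagata's structure theory the connected component $G^0$ is diagonalizable, so $\Rep(G^0) \simeq \operatorname{Vec}_A$ for a finite abelian $p$-group $A$, whose simple objects are lines $V_\lambda$ ($\lambda \in A$) with $\psi_2(V_\lambda) = V_{2\lambda}$. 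Because $G^0 \le G$ is a closed subgroup scheme, restriction $\Rep(G) \to \Rep(G^0)$ is a surjective symmetric tensor functor and therefore intertwines the Adams operations; hence $\psi_2 = \mathrm{Id}$ holds on all of $\operatorname{Vec}_A$, giving $2\lambda = \lambda$ for every $\lambda \in A$. Thus $A = 0$, $G^0 = 1$, and $G = H$ is an ordinary finite group with $p \nmid |H|$.

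The finish is by character theory. Since $p \nmid |H|$, the Grothendieck ring of $\Rep_{\mathbf k}(H)$ together with its second Adams operation agrees with the characteristic-zero picture, where $\psi_2(\chi)(g) = \chi(g^2)$. Therefore $\psi_2 = \mathrm{Id}$ is equivalent to $\chi(g) = \chi(g^2)$ for all irreducible $\chi$ and all $g$, i.e. $g \sim g^2$ for every $g \in H$. Taking $g$ of order $2$ gives $\chi(g) = \chi(1)$ for all $\chi$, so $g$ acts as the identity in every representation and $g = 1$; hence $|H|$ is odd. On the other hand, the absence of nontrivial invertible objects means $H$ has no nontrivial one-dimensional representation, so $H^{\mathrm{ab}} = 1$ and $H$ is perfect. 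By the Feit--Thompson theorem $H$ is solvable, and a nontrivial solvable group has nontrivial abelianization; hence $H = 1$. Then $G = 1$ and $\mathcal C \simeq \operatorname{Vec}$, contradicting the assumption.

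I expect the main obstacle to be the final purely group-theoretic step: ruling out a nontrivial $H$ in which every element is conjugate to its square. Combining perfectness (from the absence of invertibles) with oddness (from the nonexistence of order-$2$ elements) reduces this to the fact that a nontrivial perfect group of odd order cannot exist, which is precisely what Feit--Thompson guarantees; avoiding this input seems difficult. A secondary technical point is the reduction to an ordinary group: one must justify that restriction to the diagonalizable part $G^0$ is surjective as a tensor functor, so that $\psi_2 = \mathrm{Id}$ transfers to all of $\operatorname{Vec}_A$, which follows from Frobenius reciprocity for the closed subgroup scheme $G^0 \le G$.
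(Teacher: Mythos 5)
Your opening and closing moves are sound and essentially match the paper: the Tannakian reduction via Theorem \ref{tannakian} and Corollary \ref{psi 2 en Verp} is exactly the paper's first step, your invertible-object observation is valid (in a Tannakian category the self-braiding of a one-dimensional object is trivial, so $\Lambda^2L=0$ and $L^2=L$ forces $L\simeq\mathbf 1$), and the character-theory plus Feit--Thompson endgame for an ordinary group $H$ of order prime to $p$ is the paper's argument almost verbatim (the paper gets perfectness from $g=hgh^{-1}g^{-1}\in[H,H]$ instead of from the absence of invertibles, but both work). The genuine gap is the step where you kill $A$: you claim that since restriction $\Rep(G)\to\Rep(G^0)\simeq\vect_A$ is a surjective tensor functor intertwining $\psi_2$, the hypothesis $\psi_2=\mathrm{Id}$ "holds on all of $\vect_A$". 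Surjectivity of a tensor functor (every simple of the target is a summand of the image of some object) does \emph{not} make the induced map on Grothendieck rings surjective; all you may conclude is that $\psi_2$ fixes the image subring. Since the class of $\mathrm{Res}(X)$ in $\mathbb Z[A]$ is invariant under the conjugation action of $H=\pi_0(G)$ on $A$, that image lies in the invariant subring $\mathbb Z[A]^{H}$, spanned by $H$-orbit sums; $\psi_2$ fixing orbit sums only says each $H$-orbit is stable under $\lambda\mapsto 2\lambda$, not that $2\lambda=\lambda$. Concretely, take $p=7$ and $G=\mu_7\rtimes\mathbb Z/3$ with $\mathbb Z/3$ acting on $A=\mathbb Z/7$ by multiplication by $2$: the orbits $\{0\}$, $\{1,2,4\}$, $\{3,5,6\}$ are all stable under doubling, so $\psi_2$ is the identity on the image of restriction even though $2\lambda\ne\lambda$ on $A$. (For this $G$ one indeed has $\psi_2\ne\mathrm{Id}$, but it is detected only on the invertible objects pulled back from $\mathbb Z/3$ --- precisely the information your restriction step discards.) So the inference you rely on is false as a general principle, and your argument cannot conclude $A=0$ at that point; the issue is not the surjectivity of restriction, which is true, but that surjectivity is not the relevant property.

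The repair is to run the two reductions in the opposite order, which is what the paper does. The representations pulled back along $G\twoheadrightarrow\pi_0(G)$ form a \emph{fusion subcategory} $\Rep(H)\subseteq\mathcal C$, so $K(\Rep(H))$ is honestly a subring of $\mathcal{K(C)}$ and $\psi_2=\mathrm{Id}$ restricts to it with no transfer problem; your character-theory and Feit--Thompson argument then gives $H=1$. Only after that is $\mathcal C\simeq\Rep(G^0)\simeq\vect_A$ pointed, and there your own invertible-object lemma ($L^2=L$ forces $L\simeq\mathbf 1$) kills $A$ and finishes the proof.
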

\begin{proof}
Let $p>2$. Suppose $\psi_2$ is the identity in $\mathcal C$. In Remark \ref{remark:psi = id}  we showed that $\mathcal C$ is Tannakian, and thus equivalent to $\Rep_{\textbf{k}}(G)$ for a finite group scheme $G$. A classification of finite group schemes $G$ such that $\Rep_{\textbf{k}}(G)$ is semisimple is given by Nagata's theorem  \cite[IV, 3.6]{DG}; thus Remark \ref{remark:psi = id} yields a
classification of symmetric fusion categories such that $\psi_2=\text{Id}$. Namely, any such category is an equivariantization (see \cite[Section 4]{DGNO}) of a pointed category such that the group of simples is an abelian $p$-group (see e.g. \cite[8.4]{EGNO}), by the action of a group $H$ of order relatively prime to $p$.
Suppose $H$ is non-trivial and consider the subcategory $\Rep_{\textbf{k}}(H)$ of $\mathcal C$.  

The Adams operation acts on $\Rep_{\textbf{k}}(H)$ by mapping a character $\chi(g)$ to $\chi(g^2)$ for all $g\in H$. Thus if $\psi_2=\text{Id}$ then 
$$\chi(g^{2})=\chi(g) \text{ for all } g\in H.$$
Hence for all $g$ in $H$, $g$ is conjugate to $g^{2}$. So if $|H|$ is even, there is an element $h\in H$ of order 2, which is conjugated to $h^2=1$, a contradiction. 

Suppose then $|H|$ is odd. We have that for all $g$ in $H$ there exists some $h\in H$ such that $g^{2}=hgh^{-1}$ and so $$g=hgh^{-1}g^{-1}.$$ Thus $g$ is in the commutator subgroup of $H$, and so $H\subseteq [H,H]$. This contradicts the Feit-Thompson theorem, which states that every finite group of odd order is solvable. So $H$ must be trivial.
	
	We thus have that $\mathcal C$ is a pointed category associated with an abelian $p$-group $P$. Hence $\psi_2$ maps $g\mapsto g^2$ for all $g\in P$  and so $g=g^2$ for all $g\in P$. Then $P$ is trivial and $\mathcal C$ is equivalent to $\vect$. 
	
	The result also holds in characteristic $0$, since the Adams operation acts on $\Rep_{\textbf{k}}(G)$ by mapping a character $\chi(g)$ to $\chi(g^2)$ for all $g\in G$.
\end{proof}

\begin{remark}
The hypothesis of $\mathcal C$ being finite is neccesary. Indeed, in \cite{HSS} it is shown that in any characteristic there is a semisimple, but not finite, symmetric category, known as the Delannoy category, for which all Adam operations are the identity.
\end{remark}

\subsection{Symmetric fusion categories with exactly two self-dual simple objects}

In this subsection we prove some general properties of the Adams operation $\psi_2:\mathcal{K(C)}\to \mathcal{K(C)}$ for the case when $\mathcal C$ is a symmetric fusion category with exactly two self-dual simple objects. These results will be useful for the classification of symmetric fusion categories of ranks 3 and 4 in Sections \ref{section: rank 3} and \ref{section: rank 4}. In particular, we show that if $\psi_2$ is an automorphism then it has even order, see Theorem \ref{even order}.

Recall that throughout this section we assume $p> 2$.

\begin{lemma}\label{equiv 1 mod 2}
	Let $\mathcal C$ be a symmetric fusion category with exactly two self-dual simple objects $\mathbf{1}$ and $Y$. Then 
	\begin{align*}
		[\psi_2^{2k+1}(Y):\mathbf{1}]\equiv 1\mod 2
	\end{align*}
	for all $k\geq 0$. 
\end{lemma}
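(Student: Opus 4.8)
The plan is to work modulo $2$ throughout, exploiting the two facts recorded in Section~\ref{subsection:Adams operation}: that $\psi_2(X)\equiv X^2 \bmod 2$ for every $X\in\mathcal{K(C)}$, and that $\psi_2$ commutes with duality. The latter shows $\psi_2^{2k+1}(Y)$ is self-dual for every $k$, since $Y$ is. I would therefore track, for an arbitrary self-dual class $W$, only the reductions modulo $2$ of the two multiplicities $\alpha(W):=[W:\mathbf 1]$ and $\beta(W):=[W:Y]$, and show that $\psi_2$ acts on the pair $(\alpha,\beta)$ by a fixed $2\times 2$ matrix over $\mathbb F_2$. Because $\psi_2$ preserves self-duality, this linear action can be iterated starting from $Y$.

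The first recursion is for $\alpha$. Using $\psi_2(W)\equiv W^2$, I would compute $[\psi_2(W):\mathbf 1]\equiv[W^2:\mathbf 1]=\sum_i [W:X_i]\,[W:X_i^*]$, the sum being over the simple classes $X_i$. Since $W$ is self-dual, $[W:X_i^*]=[W:X_i]$, so this equals $\sum_i [W:X_i]^2\equiv\sum_i[W:X_i]\bmod 2$. Pairing each non-self-dual simple $X$ with $X^*$, whose multiplicities agree because $W=W^*$, kills those contributions modulo $2$ and leaves only the two self-dual simples $\mathbf 1$ and $Y$; hence $[\psi_2(W):\mathbf 1]\equiv\alpha(W)+\beta(W)$.

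The second recursion, for $\beta$, is the delicate point and is where symmetry is essential. Naively $[\psi_2(W):Y]\equiv[W^2:Y]=\sum_{i,j}[W:X_i][W:X_j]\,N^{Y}_{ij}$ with $N^Y_{ij}=[X_i\otimes X_j:Y]$ looks as though it introduces new unknowns. The key observation is that, the category being symmetric, $N^{Y}_{ij}=N^{Y}_{ji}$, so each off-diagonal pair contributes an even amount and vanishes modulo $2$, while the diagonal gives $\sum_i[W:X_i]^2 N^{Y}_{ii}\equiv\sum_i[W:X_i]N^{Y}_{ii}$. Pairing $X$ with $X^*$ once more, noting $N^Y_{X^*X^*}=N^Y_{XX}$ because $Y$ is self-dual, leaves only $\mathbf 1$ and $Y$; since $N^{Y}_{\mathbf 1\mathbf 1}=[\mathbf 1:Y]=0$ and $N^{Y}_{YY}=[Y^2:Y]=:\gamma$, I obtain $[\psi_2(W):Y]\equiv\gamma\,\beta(W)\bmod 2$.

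These computations show $\psi_2$ acts on $(\alpha,\beta)^{T}$ by $M=\left(\begin{smallmatrix}1&1\\0&\gamma\end{smallmatrix}\right)$ over $\mathbb F_2$, and it remains to read off the first coordinate of $M^{2k+1}(0,1)^{T}$, starting from $(\alpha,\beta)(Y)=(0,1)$. When $\gamma$ is even, $M(0,1)^T=(1,0)^T$ is then fixed, so the first coordinate is $1$ for every exponent $\geq 1$; when $\gamma$ is odd, $M$ is unipotent with $M^{n}(0,1)^{T}=(n,1)^{T}$, so the first coordinate is $n\bmod 2$. In either case the odd exponent $2k+1$ gives first coordinate $1$, i.e. $[\psi_2^{2k+1}(Y):\mathbf 1]\equiv 1\bmod 2$. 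The main obstacle is precisely closing the $\beta$-recursion into a two-dimensional system: without the symmetry of the fusion coefficients the update for $\beta$ would involve further multiplicities and the iteration would fail to close.
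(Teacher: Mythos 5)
Your proof is correct, and it reaches the statement by a genuinely different bookkeeping than the paper's, even though both arguments live in the same mod-$2$ world: reduce via $\psi_2(W)\equiv W^2 \bmod 2$, then use duality to isolate the two self-dual simples. The paper expands $Y^2$ \emph{once} in the basis of simples,
\begin{align*}
Y^2\equiv \mathbf 1+\sum_i [Y^2:X_i](X_i+X_i^*)+[Y^2:Y]\,Y \mod 2,
\end{align*}
applies $\psi_2^{m-1}$ to this fixed expansion, and kills the dual pairs using that $\psi_2$ commutes with duality, so that $[\psi_2^{m-1}(X_i):\mathbf 1]\equiv[\psi_2^{m-1}(X_i^*):\mathbf 1]$; this yields the one-dimensional affine recursion $[\psi_2^{m}(Y):\mathbf 1]\equiv 1+[Y^2:Y]\,[\psi_2^{m-1}(Y):\mathbf 1]$, unfolded by induction and closed with the same parity observation $1+\gamma+\gamma^2\equiv 1$ that ends your argument. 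You instead square the \emph{iterate} $\psi_2^{n-1}(Y)$, which forces you to track the $Y$-multiplicity alongside the unit multiplicity; closing that two-dimensional system is exactly what requires your extra ingredient, the commutativity $N^Y_{ij}=N^Y_{ji}$ of the fusion coefficients coming from the braiding --- a fact the paper's route never invokes (the paper uses duality-equivariance of $\psi_2$ for the cancellation itself, whereas you use it only to keep the iterates self-dual so the linear action can be composed). The trade-off: the paper's scalar recursion is leaner and needs fewer structural inputs, while your matrix $M=\bigl(\begin{smallmatrix}1&1\\0&\gamma\end{smallmatrix}\bigr)$ captures the full mod-$2$ dynamics at once, so the even-exponent multiplicities $[\psi_2^{2k}(Y):\mathbf 1]\equiv 1+[Y^2:Y]$ for $k\geq 1$ come for free as well (compare the Remark following the lemma in the paper, whose statement your computation in fact makes precise: it holds exactly when $[Y^2:Y]$ is odd).
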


\begin{proof}
	
	We proceed by induction on $k$.
	Let $\mathbf 1, Y, X_1, X_1^*, \dots, X_n, X_n^*$ denote the simple objects in $\mathcal C$. 
	Since $\psi_2(Y)\equiv Y^{2} \mod 2$ and $[Y^{2}:\mathbf{1}]=1$ then $[\psi_2(Y):\mathbf{1}]\equiv 1 \mod 2$, which proves the base case. 
	
	Fix $k>1$ and suppose that  
	\begin{align*}
		[\psi_2^{2l+1}(Y):\mathbf{1}]\equiv 1 \mod 2,  \text{  for all } l<k.
	\end{align*}
	We want to show this also holds for $l=k$. Since $Y$ is self-dual then $[Y^2:X_i]=[Y^2:X_i^*]$ for all $i=1, \dots, n$, and so
	\begin{align*}
		\psi_2(Y)\equiv Y^2 \equiv
		\mathbf{1} +\sum\limits_{i=1}^n [Y^{2}:X_i] \ (X_i+X_i^*)  + [Y^2:Y] \ Y \mod 2 , 
	\end{align*}
	for all $i=1, \dots, n$. Applying $\psi_2^{2k}$ on both sides of the previous equation we get 
	\begin{align}\label{eq:1}
		\psi_2^{2k+1}(Y)&\equiv \mathbf{1} +\sum\limits_{i=1}^n [Y^{2}:X_i] \left(\psi_2^{2k}(X_i) +\psi_2^{2k}(X_i^*)\right)  + [Y^2:Y]\psi_2^{2k}(Y)  \mod 2.
	\end{align}
	Recall that $\psi_2$ commutes with duality. That is, $\psi_2(X_i)^*=\psi_2(X_i^*)$ and so 
	\begin{align}\label{eq:2}
		[\psi_2^{l}(X_i):\mathbf{1}]=[\psi_2^l(X_i)^*:\mathbf{1}]\equiv [\psi^l_2(X_i^*):\mathbf{1}] \mod 2,
	\end{align}
	for all $l\geq 1$. From Equations \eqref{eq:1} and \eqref{eq:2} we get
	\begin{align*}
		[\psi_2^{2k+1}(Y):\mathbf{1}]&\equiv  1+ 2\sum\limits_{i=1}^n [Y^{2}:X_i] \  [\psi_2^{2k}(X_i):\mathbf{1}] + [Y^2:Y]\ [\psi_2^{2k}(Y):\mathbf{1}] \mod 2\\
		&\equiv 1+ [Y^2:Y]\ [\psi_2^{2k}(Y): \mathbf{1}] \mod 2.
	\end{align*}
	Analogously, 
	\begin{align*}
		[\psi_2^{2k}(Y):\mathbf{1}] 
		&\equiv 1 + [Y^2:Y] [\psi_2^{2k-1}(Y):\mathbf{1}] \mod 2\\
		&\equiv 1 + [Y^2:Y] \mod 2,
	\end{align*}
	since we assumed $[\psi^{2k-1}_2(Y):\mathbf{1}]\equiv 1 \mod 2$. Hence
	\begin{align*}
		[\psi_2^{2k+1}(Y):\mathbf{1}]
		&\equiv 1+ [Y^2:Y]\left( 1 + [Y^2:Y]  \right) \mod 2\\
		&\equiv 1 + [Y^2:Y]+ [Y^2:Y]^2  \mod 2\\
		&\equiv 1 \mod 2,
	\end{align*}
	as desired. 
\end{proof}


The following is a direct application of Lemma \ref{equiv 1 mod 2}.
\begin{theorem}\label{even order}
	Let $\mathcal C$ be a symmetric fusion category with exactly two self-dual simple objects. If $\psi_2$ is an automorphism  of $\mathcal{K(C)}$ then it has even order. 
\end{theorem}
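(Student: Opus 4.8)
The plan is to deduce Theorem \ref{even order} as an immediate consequence of Lemma \ref{equiv 1 mod 2}, by examining what the parity statement forces on the order of $\psi_2$. Suppose $\mathcal C$ has exactly two self-dual simple objects $\mathbf 1$ and $Y$, and suppose $\psi_2$ is an automorphism of $\mathcal{K(C)}$ of finite order $N$; I want to show $N$ is even. The key observation is that $\psi_2^N = \Id$, so in particular $\psi_2^N(Y) = Y$, which means $[\psi_2^N(Y):\mathbf 1] = [Y:\mathbf 1] = 0$, since $Y$ is a simple object distinct from $\mathbf 1$.

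The contradiction will come from comparing this with the parity computed in Lemma \ref{equiv 1 mod 2}. First I would observe that $\psi_2$ genuinely has a well-defined finite order: since $\psi_2$ is a ring endomorphism permuting the finite set $\mathcal{O(C)}$ of simple classes (being an automorphism that commutes with duality and sends the finite basis of $\mathcal{K(C)}$ to itself up to the structure of the ring), some positive power is the identity. Now if the order $N$ were odd, I could write $N = 2k+1$ for some $k\geq 0$, and Lemma \ref{equiv 1 mod 2} gives
\begin{align*}
	[\psi_2^N(Y):\mathbf 1] = [\psi_2^{2k+1}(Y):\mathbf 1] \equiv 1 \mod 2.
\end{align*}
But $\psi_2^N = \Id$ forces $[\psi_2^N(Y):\mathbf 1] = [Y:\mathbf 1] = 0 \equiv 0 \mod 2$, a contradiction. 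Hence $N$ cannot be odd, so $N$ is even.

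The only genuinely nontrivial input is Lemma \ref{equiv 1 mod 2}, which is already established; given that lemma, the argument is a short parity comparison. The one point that deserves care is justifying that $\psi_2$, being an automorphism, has a finite order at all, and that this order is the relevant $N$ with $\psi_2^N=\Id$. This is immediate because $\psi_2$ acts as a permutation on the finite basis $\mathcal{O(C)}$ of the free abelian group $\mathcal{K(C)}$ (an automorphism of $\mathcal{K(C)}$ preserving the positivity/basis structure must permute simples), so it lies in a finite permutation group and thus has finite order. I expect this bookkeeping about finite order to be the only mild obstacle; the parity contradiction itself is direct and requires no further computation.
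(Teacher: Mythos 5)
Your core argument is exactly the paper's proof: take $k$ with $\psi_2^k=\Id$, observe that then $[\psi_2^k(Y):\mathbf 1]=[Y:\mathbf 1]=0$, and contradict Lemma \ref{equiv 1 mod 2} when $k$ is odd. That parity comparison is correct and is all the paper does.

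However, the auxiliary step you single out as needing care is justified incorrectly. You claim that an automorphism of $\mathcal{K(C)}$ must permute the basis $\mathcal{O(C)}$ of simple classes and hence lies in a finite permutation group. A ring automorphism of a Grothendieck ring has no reason to preserve the positive basis, and $\psi_2$ itself is the counterexample: by Proposition \ref{adams op formula}, $\psi_2(L_t)$ in $\mathcal K(\Ver_p)$ is an alternating sum of simples with negative coefficients, and on $\mathcal K(\Ver_p^+)_{\mathbb Q}\simeq \mathbb Q(z+z^{-1})$ it acts as the Galois automorphism $z+z^{-1}\mapsto z^2+z^{-2}$ --- an automorphism of finite order which is emphatically not a permutation of the $L_i$. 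So "automorphism $\Rightarrow$ permutation of simples" is false, and your finiteness argument as written collapses. There are two clean repairs. First, you can avoid finiteness entirely, as the paper does: prove that \emph{any} $k\geq 0$ with $\psi_2^k=\Id$ is even, which is precisely your parity contradiction and is the content of "even order." Second, if you want finiteness of the order as a genuine fact, argue instead that $\mathcal{K(C)}_{\mathbb Q}$ is a finite-dimensional semisimple commutative $\mathbb Q$-algebra, i.e.\ a finite product of number fields; any automorphism permutes the finitely many primitive idempotents and restricts to isomorphisms between the factors, so the automorphism group of $\mathcal{K(C)}_{\mathbb Q}$ (into which $\Aut(\mathcal{K(C)})$ embeds) is finite. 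With either repair, the rest of your argument goes through unchanged and coincides with the paper's.
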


\begin{proof}
	Let $k\geq 0$ such that $\psi_2^k=\text{Id}$, and let $\textbf 1, Y$ denote the self-dual objects. By Theorem \ref{equiv 1 mod 2} the multiplicity of $\mathbf{1}$ in $\psi_2^k(Y)=Y$ is positive whenever $k$ is odd, and thus  $k$ must be even.
\end{proof}

In the following proposition we restrict to the case when $\psi_2:\mathcal{K(C)}\to \mathcal{K(C)}$ has trivial image. 

\begin{proposition}\label{image is Q}
	Let $\mathcal C$ be a symmetric fusion category with exactly two self-dual simple objects $\mathbf{1}$ and $Y$. If $\text{Im}({\psi_2})\cong \mathbb Z$ then $Y^2=\mathbf{1}$ and $\psi_2(Y)=-\mathbf{1}$. Moreover, $[XX^*:Y]=1$ and $[XY:Y]=0$ for all non-self-dual simple $X$.
\end{proposition}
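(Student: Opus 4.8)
The plan is to route everything through the ring homomorphism that the hypothesis manufactures. Since $\psi_2$ is a ring endomorphism, $\psi_2(\mathbf 1)=\mathbf 1$ lies in $\operatorname{Im}(\psi_2)$, so a subring isomorphic to $\mathbb Z$ and containing $\mathbf 1$ must be exactly $\mathbb Z\mathbf 1$; thus $\operatorname{Im}(\psi_2)=\mathbb Z\mathbf 1$. Consequently every class satisfies $\psi_2(Z)=\epsilon(Z)\mathbf 1$, and the assignment $\epsilon\colon\mathcal K(\mathcal C)\to\mathbb Z$, $Z\mapsto[\psi_2(Z):\mathbf 1]$, is a unital ring homomorphism. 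All four conclusions will be read off from the values of $\epsilon$ on products of simple objects, using only non-negativity of multiplicities.

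The first step is to show $\epsilon$ annihilates every non-self-dual simple $X$. Indeed $\epsilon(X)=[S^2X:\mathbf 1]-[\Lambda^2X:\mathbf 1]$, while $[S^2X:\mathbf 1]+[\Lambda^2X:\mathbf 1]=[X\otimes X:\mathbf 1]=0$ because $X$ is not self-dual; as both terms are non-negative they both vanish, giving $\epsilon(X)=0$ and hence $\psi_2(X)=0$. (By Lemma \ref{equiv 1 mod 2} applied with $k=0$ we independently know $\epsilon(Y)$ is odd, which will match the outcome below and is what one uses to treat the degenerate case mentioned at the end.)

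The heart of the argument is a single Diophantine constraint. Writing $X\otimes X^{*}=\mathbf 1\oplus[X\otimes X^{*}:Y]\,Y\oplus(\text{non-self-dual simples})$ and applying $\epsilon$, the non-self-dual summands contribute $0$ by the previous step, so $0=\epsilon(X)\epsilon(X^{*})=\epsilon(X\otimes X^{*})=1+[X\otimes X^{*}:Y]\,\epsilon(Y)$. Here $[X\otimes X^{*}:Y]\ge 0$ and $\epsilon(Y)\in\mathbb Z$, so the equation $[X\otimes X^{*}:Y]\cdot\epsilon(Y)=-1$ forces $[X\otimes X^{*}:Y]=1$ and $\epsilon(Y)=-1$. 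This simultaneously yields $\psi_2(Y)=-\mathbf 1$ and $[X\otimes X^{*}:Y]=1$.

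It then remains to analyze $Y\otimes Y$. Applying $\epsilon$ to $Y\otimes Y=\mathbf 1\oplus[Y^2:Y]\,Y\oplus\bigoplus_X[Y^2:X]\,(X\oplus X^{*})$ and using $\epsilon(Y)^2=1$ gives $1=1-[Y^2:Y]$, so $[Y^2:Y]=0$. For the mixed multiplicities I would use that $Y$ is self-dual, whence by rigidity $[Y\otimes X:Y]=[Y^2:X]$; applying $\epsilon$ to $Y\otimes X$ gives $0=\epsilon(Y)\epsilon(X)=[Y\otimes X:Y]\,\epsilon(Y)=-[Y^2:X]$, so $[Y^2:X]=0$ for every non-self-dual $X$. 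Therefore $Y\otimes Y=\mathbf 1$, and the same identity $[X\otimes Y:Y]=[Y^2:X]=0$ delivers the final claim $[XY:Y]=0$. I expect the main subtlety to be the positivity step isolating $\psi_2(X)=0$, since it is exactly what collapses the mixed terms and renders the subsequent evaluations of $\epsilon$ purely arithmetic; a secondary point is that the decisive equation $1+[X\otimes X^{*}:Y]\,\epsilon(Y)=0$ presupposes the existence of a non-self-dual simple, so the degenerate situation in which $\mathbf 1$ and $Y$ are the only simple objects must be handled separately, using the rank-two classification together with the parity information from Lemma \ref{equiv 1 mod 2}.
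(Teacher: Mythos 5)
Your proof is correct and takes essentially the same route as the paper's: both reduce the hypothesis to $\psi_2(X)=0$ for every non-self-dual simple $X$ (from $[X\otimes X:\mathbf 1]=0$ and positivity) and $\psi_2(Y)=\epsilon\,\mathbf 1$, then evaluate the ring homomorphism $\psi_2$ on $Y^2$, $XY$ and $X\otimes X^*$ and exploit non-negativity of fusion multiplicities; the only difference is bookkeeping, in that you pin down $\epsilon=-1$ from the $X\otimes X^*$ relation at the outset while the paper does it last. Concerning your closing caveat: you are right that this argument---the paper's included---needs a non-self-dual simple to exist, but the degenerate rank-two case cannot be repaired via the rank-two classification and Lemma \ref{equiv 1 mod 2}, because the sign conclusion genuinely fails there: in $\Rep_{\mathbf k}(\mathbb Z_2)$ with $p>2$ there are exactly two self-dual simples, $\operatorname{Im}(\psi_2)=\mathbb Z\mathbf 1\simeq\mathbb Z$ and $Y^2=\mathbf 1$, yet $\psi_2(Y)=S^2(Y)=\mathbf 1\neq-\mathbf 1$; so the proposition is implicitly restricted to categories possessing a non-self-dual simple, which is the only setting in which it is invoked (Lemma \ref{not Z}).
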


\begin{proof}
	Let $X$ be a non-self-dual simple object. Since $\mathbf{1}$ is not a summand of $X^2$ then $\mathbf{1}$ has multiplicity zero in $\psi_2(X)$. On the other hand, the multiplicity of  $\mathbf{1}$  in $Y^2$ is $1$ and so its coefficient  in $\psi_2(Y)$ is $\pm 1$. Thus if ${\psi_2}$ has trivial image we get that $\psi_2(X)=0$ for all non-self-dual simple $X$ and $\psi_2(Y)=\epsilon$, where $\epsilon=\pm 1$. Hence
	\begin{align*}
		\mathbf{1}=\psi_2(Y)^2=\psi_2(Y^2)= \sum\limits_{\text{simple } Z} [Y^2:Z]\  \psi_2(Z)= \left( 1+ \epsilon \cdot [Y^2: Y]\right) \cdot \mathbf{1},
	\end{align*}
	which implies
	\begin{align}\label{eq Y^2 1}
		[Y^2: Y]=0.
	\end{align}
	Similarly, 
	\begin{align}\label{eq Y^2 2}
		0=\psi_2(XY)= \sum\limits_{\text{ simple } Z} [XY:Z] \psi_2(Z)=  \epsilon \cdot  [XY: Y] \cdot \mathbf{1},
	\end{align}
	and thus 
	\begin{align*}
		[XY:Y]=0, \text{ for all non-self-dual simple } X.
	\end{align*}
	From the fusion rule $[XY:Y]=[Y^2:X]$ and Equations \eqref{eq Y^2 1} and \eqref{eq Y^2 2} we conclude $Y^2=\mathbf{1}$. Lastly, note that
	\begin{align*}
		0 =\psi_2(XX^*)=\mathbf{1} + \epsilon \cdot [XX^*: Y]\cdot \textbf 1,
	\end{align*}
	and thus we must have $\epsilon =-1$ and $[XX^*:Y]=1$. 
\end{proof}

\section{Rank 3 symmetric fusion categories}\label{section: rank 3}

In this section we classify symmetric fusion categories $\mathcal C$ of rank 3, making use of the properties of the Adams operation. Namely, we show that $\mathcal C$ is equivalent to one of the following:
\begin{itemize}
	\item If $p=2$, $\mathcal C \cong \Rep_{\mathbf k}(\mathbb Z_ 3)$.
	\item If $p=3$, $\mathcal C \cong\vect_{\mathbb Z_3}^{\mathbb Z_2}$ or $\mathcal C \cong\vect_{\mathbb Z_3}$ .
	\item If $p=7$, $\mathcal C \cong \Rep_{\mathbf k}(S_3)$, $\mathcal C \cong \Rep_{\textbf k}(\mathbb Z_3)$ or $\mathcal C\cong \Ver_7^+$.
	\item If $p=5$ or $p>7$, $\mathcal C \cong \Rep_{\mathbf k}(S_3)$ or  $\mathcal C \cong \Rep_{\textbf k}(\mathbb Z_3)$.
\end{itemize}

We note that if $\mathcal C$ is non-super-Tannakian, then by Theorem \ref{rango p-1/2} we know that $p\leq 7$ . Hence the only possibilities are $p=5$ or $7$, since in the cases $p=2$ or $p=3$ the category would be super-Tannakian. 

We make use of the parametrization of self-dual based rings of rank 3 as given in \cite{O}. Let $k,l,m,n$ be non negative integers satisfying
\begin{align}\label{rank 3}
	k^2 + l^2 = kn + lm + 1,
\end{align}
and consider the ring $K(k,l,m,n)$ with basis $1,X,Y$ and multiplication rules
\begin{align}\label{rank 3 mult}
	&X^2 = 1 + mX + kY, &Y^2 = 1 + lX + nY, &&XY = Y X = kX + lY.
\end{align}
Note that we have a  based ring isomorphism $K(k, l, m, n) \cong K(l, k, n, m)$ obtained by the interchange $X \leftrightarrow Y$. Hence we will assume $ l \geq k$. By \cite[Proposition 3.1]{O} any unital based ring of rank 3 is isomorphic to either $K(k, l, m, n)$ or $K(\mathbb Z_3)$, where $\mathcal K (\mathbb Z_3)$ denotes the group algebra of the group $\mathbb Z/3\mathbb Z$, which has rank $3$ with basis given by the group elements.

Recall that a fusion category is \emph{integral} if the Frobenius Perron dimension of every simple object $X$ is an integer. We have the following result. 

\begin{theorem}\label{rank 3 integer}
	There is an integral symmetric fusion category $\mathcal C$ with Grothendieck ring $K(k,l,m,n)$ with $l \geq k$ if and only if $(k,l,m,n)=(0,1,0,1)$ and $p\geq 3$. Moreover, in such case
	\begin{enumerate}
		\item $\mathcal C\cong \Rep(S_3)$ if $p>3$, or
		\item $\mathcal C \cong \vect_{\mathbb Z_3}^{\mathbb Z_2}$ if $p=3$.
	\end{enumerate}
\end{theorem}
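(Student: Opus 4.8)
The plan is to split the statement into its two directions and handle the numerical constraint first. For the ``if'' direction, I would verify directly that the parameters $(k,l,m,n)=(0,1,0,1)$ yield the fusion rules $X^2=\mathbf 1+Y$, $Y^2=\mathbf 1+X+Y$, $XY=YX=Y$, and recognize these as the Grothendieck ring of $\operatorname{Rep}(S_3)$ (with $X$ the sign character and $Y$ the two-dimensional irreducible). Then I would invoke the known facts that $\operatorname{Rep}(S_3)$ is a semisimple symmetric fusion category precisely when $p\nmid |S_3|=6$, i.e.\ when $p>3$ (or $p=0$), and that in characteristic $3$ one instead gets the equivariantization $\vect_{\mathbb Z_3}^{\mathbb Z_2}$ realizing the same ring; this gives cases (1) and (2). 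Both of these are integral categories, and both lift to characteristic zero if desired.

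The substance is the ``only if'' direction: showing no other parameter tuple $(k,l,m,n)$ satisfying \eqref{rank 3} can be categorified by an \emph{integral} symmetric fusion category, and that $p\geq 3$ is forced. First I would extract the FPdim constraints. Writing $d=\FPdim(X)$ and $e=\FPdim(Y)$, applying $\FPdim$ to the multiplication rules \eqref{rank 3 mult} gives $d^2=1+md+ke$, $e^2=1+le+nd$, and $de=kd+le$, together with $\FPdim(\mathcal C)=1+d^2+e^2$. Since $\mathcal C$ is assumed integral, $d$ and $e$ are positive integers, which sharply limits the solutions. I would combine these with the Diophantine relation \eqref{rank 3} to pin down the admissible tuples; the key leverage is that $X,Y$ are self-dual, so the structure constants must be symmetric in the appropriate sense and $d,e\geq 1$.

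The main tool for ruling out the surviving tuples is the second Adams operation, via the machinery developed earlier in the excerpt. Because $\mathcal C$ has exactly the two self-dual simple objects $\mathbf 1$ and $Y$ (as $X$ must be non-self-dual unless it too is self-dual, a case I would separate), Lemma \ref{equiv 1 mod 2}, Theorem \ref{even order}, and Proposition \ref{image is Q} apply. In particular, the congruence $X^2\equiv\psi_2(X)\bmod 2$ forces strong parity conditions on $k,l,m,n$, and Theorem \ref{psi not identity} rules out the tuple giving $\psi_2=\operatorname{Id}$. For a genuinely non super Tannakian candidate, Theorem \ref{rango p-1/2} already forces $p\leq 7$, and since rank $3$ non super Tannakian categories occur only in characteristic $5$ or $7$, I would check those small primes directly against the FPdim and Adams constraints, showing the only integral solution is $(0,1,0,1)$. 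I anticipate the main obstacle is organizing the Diophantine case analysis cleanly: several tuples will satisfy \eqref{rank 3} and the FPdim relations numerically, and eliminating each will require a careful bookkeeping of the Adams-operation multiplicities $[\psi_2^j(Y):\mathbf 1]$ together with the parity congruences, so the challenge is to present this exhaustive check without it degenerating into an opaque list of cases.
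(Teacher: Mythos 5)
Your opening move (applying $\FPdim$ to the multiplication rules) is the same as the paper's, but you stop short of the key arithmetic and then lean on machinery that is both unnecessary and inapplicable. In the paper, integrality alone pins down the tuple: from $\FPdim(X)^2=1+m\FPdim(X)+k\FPdim(Y)$ one deduces $\gcd(\FPdim(X),\FPdim(Y))=1$; feeding this coprimality into $\FPdim(X)\FPdim(Y)=k\FPdim(X)+l\FPdim(Y)$ forces $k=0$ and $l=\FPdim(X)$ (using the normalization $l\geq k$); then the quadratic formulas $\FPdim(X)=\tfrac{m+\sqrt{m^2+4}}{2}$ and $\FPdim(Y)=\tfrac{n+\sqrt{n^2+8}}{2}$ force $m=0$ (so $l=\FPdim(X)=1$) and $n=1$. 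So there are no ``surviving tuples'' at all: your anticipated Adams-operation case analysis has nothing to do. Worse, the tools you cite --- Lemma \ref{equiv 1 mod 2}, Theorem \ref{even order}, Proposition \ref{image is Q} --- require \emph{exactly two} self-dual simple objects, whereas every basis element of $K(k,l,m,n)$ is self-dual (one has $[X^2:\mathbf 1]=[Y^2:\mathbf 1]=1$ and $[XY:\mathbf 1]=0$; the rank-3 ring with a dual pair is $K(\mathbb Z_3)$, excluded by hypothesis), so that machinery never applies here. Likewise, Theorem \ref{rango p-1/2} and the bound $p\leq 7$ concern non super Tannakian categories and are irrelevant to the integral statement: $\Rep(S_3)$ realizes $K(0,1,0,1)$ in every characteristic $p>3$. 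The Adams operation is the engine of the \emph{non-integral} classification (Theorem \ref{theo: rank 3}), not of this theorem.

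Two parts of the statement are also left unproved in your proposal. First, the claim $p\geq 3$: observing that $\Rep(S_3)$ is not semisimple in characteristic $2$ only rules out that particular category; you must show \emph{no} symmetric fusion category realizes $K(0,1,0,1)$ when $p=2$. The paper does this using that symmetric fusion categories in characteristic $2$ are Tannakian: a fiber functor $F\colon\mathcal C\to\operatorname{Vec}$ would give $d=\dim(F(Y))$ with $d^2=2+d=d$ in characteristic $2$, whence $\dim(Y)=0$, impossible for a simple object. Second, the ``moreover'' clause is a \emph{uniqueness} assertion: any integral symmetric fusion category with this ring is equivalent to $\Rep(S_3)$ (for $p>3$) or to $\vect_{\mathbb Z_3}^{\mathbb Z_2}$ (for $p=3$); recognizing the fusion rules as those of $\Rep(S_3)$ only gives existence. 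The paper obtains uniqueness for $p>3$ by noting $\dim(\mathcal C)=6\neq 0$, so $\mathcal C$ is non-degenerate and lifts to characteristic zero, where Deligne's theorem identifies the lift with $\Rep(S_3)$, and uniqueness of lifting descends this identification; for $p=3$ it de-equivariantizes by the copy of $\Rep(\mathbb Z_2)$ inside $\mathcal C$ to get $\operatorname{Vec}_{\mathbb Z_3}$ and re-equivariantizes. Without these arguments (or substitutes for them), the proof is incomplete even after the fusion rules are pinned down.
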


\begin{proof}
	Suppose $\mathcal C$ is an integral fusion category with Grothendick ring given by $K(k,l,m,n)$ with  $l\geq k$. Taking Frobenius-Perron dimension on the multiplication rules for $X^2$ and $XY$  (Equation \eqref{rank 3 mult}), we get that
	\begin{align*}
	\FPdim(X)^2=1+m\FPdim(X)+ k\FPdim(Y)&&\text{and} &&1=\frac{k}{\FPdim(Y)}+\frac{l}{\FPdim(X)},
	\end{align*}
respectively. 
From the first equality we deduce that $\FPdim(X)$ and $\FPdim(Y)$ are coprime. 
Hence the second equality is only possible if $k=0$ and $l=\FPdim(X)$. So the multiplication rules are 
	\begin{align*}
		&X^2 = 1 + mX, &Y^2 = 1 + \FPdim(X)X + nY, &&XY = Y X =  \FPdim(X)Y.
	\end{align*}
Taking Frobenius-Perron dimension on both sides of  the equation for $X^2$ we get that $\FPdim(X)^2-m\FPdim(X)-1=0$, which implies
\begin{align*}
	\FPdim(X)=\frac{m + \sqrt{m^2+4}}{2}.
\end{align*}
	Since $\FPdim(X)$ is an integer, we need for $ m^2+4$ to be a square. It is easy to check this is only possible for $m=0$, and so the multiplication rules are
		\begin{align*}
		&X^2 = 1, &Y^2 = 1 + X + nY, &&XY = Y X = Y.
	\end{align*}
	Taking Frobenius-Perron dimension on both sides of the equation for $Y^2$ we get that 
	\begin{align*}
		\FPdim(Y)=\frac{n + \sqrt{n^2+8}}{2}.
	\end{align*}
We thus need $n^2+8$ to be a square, which is only possible for $n=1$. Hence
		\begin{align}\label{dim eq}
		&X^2 = 1, &Y^2 = 1 + X + Y, &&XY = Y X = Y.
	\end{align}
So far we have showed that if $\mathcal C$ is integral, then it must have fusion rules as above. We have not used the assumption that $\mathcal C$ is symmetric yet. We will use it in what follows to complete the proof.

We look at the case $p>3$ first.  From Equations \eqref{dim eq} we get that $\dim(X)=1$ and $\dim(Y)=1$ or $2$, and thus $\dim(\mathcal C)=3$ or $6$. Since $p>3$, then $\dim(\mathcal C)\ne 0$ and thus $\mathcal C$ is non-degenerate. Hence we can lift $\mathcal C$ to a symmetric fusion category $\widetilde{\mathcal C}$ over a field $\textbf{f}$ in characteristic zero, which has the same Grothendick ring as $\mathcal C$, see \cite[Subsection 9.16]{EGNO} and \cite{E}. 
Thus $\widetilde{\mathcal{C}}$ is equivalent to $\Rep_{\textbf f}(S_3)$ (see  \cite[Section 8.19]{D}), and so by uniqueness of the lifting we get that $\mathcal C$ is equivalent to $\Rep_{\textbf k}(S_3)$. 

For the case $p=3$ we have that $\mathcal C$ contains a copy of $\Rep(\mathbb Z_2)$. Doing de-equivariantization by $\mathbb Z_2$ we obtain a symmetric fusion category $\mathcal C^{\mathbb Z_2}$ of dimension $3$ \cite[Section 2.7]{EGNO}. Hence $\mathcal C^{\mathbb Z_2}$ is a symmetric pointed category and must be equivalent to $\operatorname{Vec}_{\mathbb Z_3}$. There is only one action of $\mathbb Z_2$ on $\mathbb Z_3$, and so doing equivariantization by $\mathbb Z_2$ gives us back $\mathcal C$. Hence
$\mathcal C\cong \text{Vec}_{\mathbb Z_3}^{\mathbb Z_2}$.

On the other hand, for $p=2$ there is no category realizing these fusion rules. In fact, in such case we would have that the category is Tannakian, and so we have a fiber functor $F:\mathcal C\to \operatorname{Vec}$.
From Equation $\eqref{dim eq}$ we know that $X$ is invertible and $Y$ is not, and thus the same is true for $F(X)$ and $F(Y)$, respectively. Thus $d:=\dim(F(Y))$ is not 1 and must satisfy $d^2=2+d=d$. The only other possible solution is thus $d=0$, and since $F$ preserves dimensions we obtain $\dim(Y)=0$, which is not possible. 
\end{proof}

\begin{theorem}\label{theo: rank 3}
If $\mathcal C$ is a non-integral symmetric fusion category with Grothendieck ring $K(k,l,m,n)$ then $p=7$ and $\mathcal C=\Ver_7^+$.
\end{theorem}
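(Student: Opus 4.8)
The plan is to fix the characteristic first, then the Grothendieck ring, and finally to upgrade the Verlinde functor to an equivalence. Since $\FPdim$ is preserved by any tensor functor (by uniqueness of the Frobenius--Perron character), a symmetric fiber functor $\mathcal C\to\operatorname{sVec}$ would force every simple object to have integer dimension; as $\mathcal C$ is non-integral it is therefore not super Tannakian. Theorem~\ref{rango p-1/2} then gives $3=\rank(\mathcal C)\ge\frac{p-1}{2}$, so $p\le 7$, and since $p=2,3$ yield (super) Tannakian categories we must have $p\in\{5,7\}$. The Verlinde functor $F\colon\mathcal C\to\Ver_p$ cannot be surjective, for otherwise Theorem~\ref{theo: p-1} would give $\rank(\mathcal C)\ge p-1>3$; hence its image is $\Ver_p^+$, and the proof of Theorem~\ref{rango p-1/2} shows $\tilde F(\mathcal{K(C)})_{\mathbb Q}=\mathbb Q(z+z^{-1})$, a field of degree $\frac{p-1}{2}$.

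Consider $p=7$, where the degree equals $3=\rank(\mathcal C)$, so $\tilde F_{\mathbb Q}$ is an isomorphism and $\mathcal{K(C)}_{\mathbb Q}\cong\mathbb Q(z+z^{-1})$ is the cyclic cubic field $\mathbb Q(\cos\tfrac{2\pi}{7})$. Relabelling the two nontrivial simples if necessary, assume $d_X:=\FPdim(X)\notin\mathbb Q$; then $d_X$ generates this field, so its minimal polynomial is the characteristic polynomial of the fusion matrix $N_X=\left(\begin{smallmatrix}0&1&0\\1&m&k\\0&k&l\end{smallmatrix}\right)$, namely $\lambda^3-(m+l)\lambda^2+(ml-k^2-1)\lambda+l$. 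Requiring this monic integer cubic to generate $\mathbb Q(\cos\tfrac{2\pi}{7})$ with positive Perron root $d_X$ and non-negative coefficients $m,l,k$ forces it to be $\lambda^3-\lambda^2-2\lambda+1$, the minimal polynomial of $2\cos\tfrac{\pi}{7}$ (the companion cubic $\lambda^3+\lambda^2-2\lambda-1$ is excluded since it would give $m+l=-1$); non-negativity of the structure constants rules out the remaining possibilities. Together with \eqref{rank 3} this yields $(k,l,m,n)=(1,1,0,1)$, the Grothendieck ring of $\Ver_7^+$, with $d_X=2\cos\tfrac{\pi}{7}=\FPdim(L_5)$ and $d_Y=d_X^2-1=\FPdim(L_3)$.

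The remaining and most delicate point is to deduce $\mathcal C\simeq\Ver_7^+$: unlike the integral case of Theorem~\ref{rank 3 integer}, one cannot lift to characteristic zero, since $\dim(\Ver_7^+)=1+3^2+5^2=0$ in $\mathbf k$ makes it degenerate. Instead I exploit that $F$ preserves Frobenius--Perron dimension. Writing $F_*X=\sum_i c_i L_i$ with $c_i\in\mathbb Z_{\ge 0}$, preservation gives $\sum_i c_i\FPdim(L_i)=d_X=\FPdim(L_5)$; since $\{\FPdim(L_1),\FPdim(L_3),\FPdim(L_5)\}$ is a $\mathbb Q$-basis of the cubic field, this forces $F_*X=L_5$, and similarly $F_*Y=L_3$. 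Thus $F$ carries the three simple objects of $\mathcal C$ bijectively to those of $\Ver_7^+$, so it is fully faithful; being surjective, it is an equivalence.

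Finally $p=5$ must be excluded. Here $\tilde F_{\mathbb Q}$ has a one-dimensional kernel, so $\mathcal{K(C)}_{\mathbb Q}\cong\mathbb Q(\sqrt5)\times\mathbb Q$ and $N_X$ acquires an integer eigenvalue besides $d_X$; writing $d_X=a_1+b_1\phi$ and $d_Y=a_2+b_2\phi$ with $a_i,b_i\in\mathbb Z_{\ge 0}$ and $\phi=\FPdim(L_3)=\tfrac{1+\sqrt5}{2}$, and substituting into \eqref{rank 3 mult}, the $\phi$-components force $b_1,b_2\ge1$ (otherwise $k$ or $l$ vanishes, contradicting $l\ge k\ge1$). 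Eliminating $m$ and $n$ then yields $k(a_1b_2-a_2b_1)=b_1\bigl(1+N(d_X)\bigr)$ and $l(a_1b_2-a_2b_1)=-b_2\bigl(1+N(d_Y)\bigr)$, where $N$ denotes the norm of $\mathbb Q(\sqrt5)$; analysing these together with the multiplication rule for $XY$ leaves no solution (for instance $a_1b_2=a_2b_1$ forces $N(d_X)=N(d_Y)=-1$ and then, via the $XY$-relation, $N(d_X)=0$, a contradiction), so characteristic $5$ is impossible. The main obstacle throughout is precisely this uniqueness step for $p=7$: degeneracy precludes the lifting argument, and the conclusion must instead be wrung out of the preservation of $\FPdim$ by the Verlinde functor together with the $\mathbb Q$-independence of the Verlinde dimensions.
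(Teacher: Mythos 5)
Your global strategy is sound and genuinely different from the paper's: you fix the characteristic first (using Theorems \ref{rango p-1/2} and \ref{theo: p-1} to get $p\in\{5,7\}$ and $F(\mathcal C)=\Ver_p^+$) and then try to pin down the fusion ring by Diophantine analysis of Frobenius--Perron dimensions inside $\mathbb Q(z+z^{-1})$, whereas the paper never fixes $p$: it shows that in the cubic-field case $(\psi_2)_{\mathbb Q}$ is an automorphism of order $3$, bounds the matrix entries of $\psi_2$ via Vieta's formulas (hence $m+l,\,k+n\le 6$), finds the ring by a finite Sage search, and only afterwards deduces $p=7$ from a degeneracy computation. Your concluding step is correct and in fact more detailed than the paper's last sentence: preservation of $\FPdim$ plus the $\mathbb Q$-linear independence of $1,\FPdim(L_3),\FPdim(L_5)$ forces $F$ to send simples bijectively to simples, hence $F$ is fully faithful and gives the equivalence. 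But this is the easy part, not, as you say at the end, the main obstacle.

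The genuine gaps are the two ``forcing'' claims, which is exactly where all the work lies. For $p=7$ you assert that generating $\mathbb Q(\cos\frac{2\pi}{7})$ with Perron root and non-negative $k,l,m$ forces the cubic $\lambda^3-\lambda^2-2\lambda+1$; but the candidates are minimal polynomials of arbitrary generators of this field, an infinite family, and you supply no finiteness or bounding argument (nothing in your proof plays the role of the paper's Adams-operation bound). Indeed the claim is false as literally stated: $\lambda^3-2\lambda^2-\lambda+1$, the minimal polynomial of $1+2\cos\frac{2\pi}{7}$, passes every test you list and yields $(k,l,m,n)=(1,1,1,0)$, satisfying \eqref{rank 3} --- harmless only because this is the relabelled $\Ver_7^+$ ring, but it shows that no actual elimination was carried out. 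For $p=5$, your two norm equations are correct (they follow by eliminating $m$ from the $X^2$-relation and $n$ from the $Y^2$-relation), but ``analysing these \dots leaves no solution'' is verified only in the sub-case $a_1b_2=a_2b_1$; in the complementary case, combining your equations with the two components of the $XY$-relation gives $N(d_X)+N(d_Y)=-1$, which still admits infinitely many norm pairs $(4,-5),(19,-20),(44,-45),\dots$, each realized by infinitely many elements of the ring of integers of $\mathbb Q(\sqrt5)$, and excluding them requires bringing in the rational character, the Perron inequalities and \eqref{rank 3} in a way you never do. Finally, the inequality $k\ge 1$, which you invoke to get $b_1,b_2\ge1$, is itself unproved: $k=0$ gives the legitimate based rings $K(0,1,0,n)$, which can only be excluded by the separate observation that $n^2+8=5c^2$ has no integer solutions. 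So the skeleton stands, but both Diophantine cores of the argument are missing, and they are not routine.
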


\begin{proof}
Note that $p\ne 2, 3$ since in that case $\mathcal C$ is super-Tannakian and thus integral. Moreover, we also know that $p\leq 7$ by Theorem \ref{rango p-1/2}. However, our proof does not require use of this fact.

Consider the Adams operation $\psi_2: K(k,l,m,n)\to K(k,l,m,n)$. Since $X$ is self-dual, either $S^2(X)$ or $\Lambda^2(X)$ contains a copy of the unit object (but not both). The same is true for $Y$. Hence  the multiplicity of $\mathbf 1$ in both $\psi_2(X)$ and $\psi_2(Y)$ is 1 or -1, and so
\begin{align*}
	&\psi_2(X)=\epsilon_1 \textbf 1+ \alpha X + \beta Y, &\psi_2(Y)=\epsilon_2 \textbf 1 + \gamma X + \delta Y,
\end{align*}
for some $\epsilon_1, \epsilon_2\in \{1,-1\}$ and $\alpha, \beta,\gamma, \delta \in \mathbb Z$.

Since $\mathcal{K(C)}$ is a based ring of rank $3$ then $\mathcal{K(C)}_{\mathbb Q}:=\mathcal{K(C)}\otimes \mathbb Q$ is a semisimple commutative $\mathbb Q$-algebra of dimension 3, see \cite[Corollary 3.7.7]{EGNO}. Hence we have three distinct possibilities for $\mathcal{K(C)}_{\mathbb Q}$ as a $\mathbb Q$-algebra, and we proceed by looking at them separetly. 

$\blacktriangledown$ Case 1: $\mathcal{K(C)}_{\mathbb Q}\cong \mathbb Q \oplus \mathbb Q \oplus \mathbb Q$. In this case the homomorphism $\FPdim_{\mathbb Q} : \mathcal{K(C)}_{\mathbb Q} \to \mathbb R$ can only have  rational image. Since $\FPdim(X)$ is an algebraic integer for all $X\in \mathcal C$, this implies that $\FPdim(X)$ is an integer for all $X\in \mathcal C$, and thus $\mathcal C$ is integral.

$\blacktriangledown$ Case 2: $\mathcal{K(C)}_{\mathbb Q}\cong \mathbb Q \oplus \mathbb Q(\sqrt{m})$ for some $m\in \mathbb Z$. Since $(\psi_2)_{\mathbb Q}$ is an endomorphism of  $\mathcal{K(C)}_{\mathbb Q}\cong \mathbb Q \oplus \mathbb Q(\sqrt{m})$ mapping $(1,1)\mapsto (1,1)$, then it is either an automorphism of order $1$ or $2$, or has image the diagonal copy of $\mathbb Q$ inside $ \mathbb Q \oplus \mathbb Q(\sqrt{m})$. Hence we have three possibilities for $\psi_2:\mathcal{K(C)}\to \mathcal{K(C)}$: it satisfies $\psi_2=\text{Id}$, $\psi_2^2=\text{Id}$ or $\text{Im}(\psi_2)= \mathbb Z$. 
We show none of these are possible. 

We assume first that $\text{Im}(\psi_2)=\mathbb Z$. So we have $\psi_2(X)=\epsilon_1\textbf 1$ and $\psi_2(Y)=\epsilon_2\textbf 1$. The equalities
\begin{align*}
1=[\psi_2(X)^2:\mathbf 1]	=[\psi_2(X^2):\mathbf 1]&=[\mathbf 1+m\psi_2(X)+k\psi_2(Y):\mathbf 1]=1+m\epsilon_1+k\epsilon_2,
\end{align*}
imply that $\epsilon_1$ and $\epsilon_2$ must have opposite signs and that $m=k$. The analogous computation with $Y$ shows that $l=n$.  Moreover, since
\begin{align*}
\epsilon_1\epsilon_2=[\psi_2(X)\psi_2(Y):\mathbf 1]=[\psi_2(XY):\mathbf 1]=[k\psi_2(X)+l\psi_2(Y):\textbf 1]=k\epsilon_1+l\epsilon_2,
\end{align*}
  we have that $l=k\pm 1$. Recall that we are assuming $l\geq k$ and thus $l=k+1$. Then $$X^2=1+kX+(k+1)Y,$$
  and so 
\begin{align*}
	&X^2\equiv 1+X \mod 2&&\text{or} &&X^2 \equiv 1+Y \mod 2,
\end{align*}
which contradicts $X^2\equiv \psi_2(X) \equiv 1\mod 2$. So the case  $\text{Im}(\psi_2)=\mathbb Z$ is not possible.

On the other hand, $\psi_2=\text{Id}$ is not possible by Theorem \ref{psi not identity}. Thus it remains to show that we cannot have $\psi_2^2=\text{Id}$ either. Suppose $\psi_2^2=\text{Id}$. Then 
\begin{align*}
&	0=[\psi_2^2(X):\mathbf 1]=\epsilon_1+\alpha\epsilon_1+\beta\epsilon_2 &\text{and} &&0=[\psi_2^2(Y):\mathbf 1]= \epsilon_2+\gamma \epsilon_1+\delta\epsilon_2,
\end{align*}
which imply 
\begin{align}\label{eq}
&&	\beta=-(\alpha+1)\epsilon_1\epsilon_2 &&\text{and} &&\gamma=-(\delta+1)\epsilon_1\epsilon_2.
\end{align} Also
\begin{align*}
	&0=[\psi^2_2(X):Y]=\beta(\alpha+\delta) &\text{and} &&0=[\psi_2^2(Y):X]=\gamma(\alpha+\delta).
\end{align*}
If $\alpha+\delta\ne 0$ then $\beta=\gamma=0$, which implies $\alpha=\delta=-1$.  That is, 
\begin{align*}
	&\psi_2(X)=\epsilon_1\textbf 1-X &&\text{and} &&\psi_2(Y)=\epsilon_2\textbf 1-Y.
\end{align*}
We compute
\begin{align*}
	&\psi_2(X)\psi_2(Y)=\epsilon_1\epsilon_2\textbf 1 +(k-\epsilon_2)X+(l-\epsilon_1)Y,\\
	&\psi_2(XY)=k\psi_2(X)+l\psi_2(Y)=(k\epsilon_1+l\epsilon_2)\textbf 1 - kX -lY.
\end{align*}
Since the right-hand sides of the previous equations must be equal, we have that $k-\epsilon_2= -k$ and so $2k=\pm 1$, which is not possible. 

 Hence we should have $\alpha+\delta=0$, and using this together with \eqref{eq} we get
\begin{align}\label{cong}
	&&\alpha\equiv \delta \mod 2, &&\beta=\gamma \mod 2 &&\text{and} &&\alpha\equiv \beta+1 \mod 2.
\end{align}
 On the other hand, from the equation
 \begin{align*}
 m\alpha +k\gamma =[\psi_2(X^2):X]=[\psi_2(X)^2:X]=2\epsilon_1\alpha + \alpha^2 m +2\alpha\beta k+\beta^2l,
 \end{align*}
we obtain
\begin{align*}
	m\alpha +k\gamma  \equiv m\alpha  +l\beta \mod 2.
\end{align*}
This together with congruences \eqref{cong} gives $k\beta\equiv l\beta\mod 2$. Lastly, 
\begin{align*}
	&[\psi_2(XY):X]=k\alpha+l\gamma,\\
	&[\psi_2(X)\psi_2(Y):X]=\epsilon_1\gamma+\epsilon_2\alpha+m\gamma\alpha+k\alpha\delta+k\gamma\beta+l\beta\delta,
\end{align*}
and so 
\begin{align*}
	\gamma+\alpha+m\gamma\alpha+k\alpha\delta+k\gamma\beta+l\beta\delta\equiv k\alpha + l\gamma \mod 2.
\end{align*}
Note that $\alpha\gamma\equiv 0\equiv \beta\delta \mod 2$, see \eqref{cong}. Hence the equation above is
\begin{align*}
	\gamma+\alpha+k\alpha\delta+k\gamma\beta\equiv k\alpha + l\gamma \mod 2.
\end{align*}
Now, using the congruences in \eqref{cong} and $k\beta\equiv l \beta \mod 2$ in the equation above, we obtain $1\equiv  0 \mod 2$. Thus $(\psi_2)_{\mathbb Q}$ cannot be an automorphism of order 2 and this case is not possible. 

$\blacktriangledown$ Case 3: $\mathcal{K(C)}$ is a field extension of degree 3 over $\mathbb Q$. In this case $(\psi_2)_{\mathbb Q}$ must be an automorphism of order 3, since it cannot be the identity by Theorem \ref{psi not identity}. Note that Theorem \ref{remark on powers of psi} implies $p=3$ or $7$. We show $p=7$ and $\mathcal C\cong \Ver_7^+$.
Since $\psi_2$ has order 3, then  $X, \psi_2(X)$ and $\psi_2^2(X)$ are distinct roots of the minimal polynomial of $X$, given by $$m_X(t)=t^3-(m+l)t^2-(1+k^2-ml)t+l.$$ 
By the Vieta formulas we have that 
\begin{align}\label{vieta 1}
	&	m+l=[X+\psi_2(X)+\psi_2^2(X):\mathbf 1]=2\epsilon_1+\alpha\epsilon_1+\beta\epsilon_2.
\end{align}
Repeating this for $Y$ we get
\begin{align}\label{vieta 2}
	k+n=[Y+\psi_2(Y)+\psi_2^2(Y):\mathbf 1]=2\epsilon_2+\gamma\epsilon_1 +\delta\epsilon_2 .
\end{align}
On the other hand, 
\begin{align*}
&1+m\epsilon_1+k\epsilon_2=[\psi_2(X^2):\mathbf 1]=[\psi_2(X)^2:\mathbf 1]=1+\alpha^2+\beta^2, \ \ \text{and}\\	
&1+l\epsilon_1+n\epsilon_2=[\psi_2(Y^2):\mathbf 1]=[\psi_2(Y)^2:\mathbf 1]=1+\gamma^2+\delta^2,
\end{align*}
and thus 
\begin{align*}
	\alpha^2+\beta^2+\gamma^2+\delta^2=(m+l)\epsilon_1+(k+n)\epsilon_2.
\end{align*}
Combining this together with Equations \eqref{vieta 1} and \eqref{vieta 2} we get
\begin{align*}
	\alpha^2+\beta^2+\gamma^2+\delta^2&=(2\epsilon_1+\alpha\epsilon_1+\beta\epsilon_2)\epsilon_1+(2\epsilon_2 +\gamma\epsilon_1+\delta\epsilon_2)\epsilon_2\\
	&=2+\alpha+\beta\epsilon_1\epsilon_2+2+\delta+\gamma\epsilon_1\epsilon_2,
\end{align*}
and so 
\begin{align}\label{eq: sum = 4}
	\alpha^2-\alpha +\beta^2-\beta \epsilon_1\epsilon_2+\gamma^2-\gamma\epsilon_1\epsilon_2+\delta^2-\delta=4.
\end{align}
Hence $|\alpha|, |\beta|,|\gamma|$ and $|\delta|$ are at most 2, which implies by \eqref{vieta 1} and $\eqref{vieta 2}$ that $m+l, k+n\leq 6$.

From each of the equalities $\psi_2(X^2)=\psi_2(X)^2$, $\psi_2(Y^2)=\psi_2(Y)^2$ and $\psi_2(XY)=\psi_2(X)\psi_2(Y)$ we get three equations on the parameters $\alpha, \beta, \gamma, \delta, k, l, m, n, \epsilon_1, \epsilon_2$. The bound $k,n\leq 6$ allows us to verify in Sage that the only solutions to said equations fulfilling \eqref{rank 3} and \eqref{eq: sum = 4}  are
\begin{align*}
	&k=1, l=1, m = 1, n = 0, \alpha =-1, \beta= 1, \gamma = -1, \delta = 0, \ \text{and}\\
	&k=1, l=1, m = 0, n = 1, \alpha= 0, \beta= -1, \gamma = 1, \delta = -1.
\end{align*}
By symmetry, it is enough to consider the second case. We have multiplication rules
\begin{align*}
	&X^2 = \mathbf 1 + Y  &Y^2 = \mathbf 1 + X + Y, &&XY = Y X = X +Y.
\end{align*}
Note that these are the same fusion rules as $\Ver_7^+$. Taking  dimension on the equalities above we arrive at the equation
 \begin{align}\label{eq: dim(Y)}
 	\dim(Y)^3-2\dim(Y)^2-\dim(Y)+1=0.
 \end{align}
On the other hand, note that $\mathcal C$ must be degenerate. In fact, if it was non-degenerate (see Section \ref{section:non-deg}) it would lift to a symmetric category over a field of characteristic zero 
\cite{E} and thus would have integer Frobenius-Perron dimensions \cite[ Theorem 9.9.26]{EGNO}. We compute
\begin{align*}
	0=\dim(\mathcal C)&= 1 + \dim(X)^2+\dim(Y)^2\\
	&=1 + 1 + \dim(Y)+\dim(Y)^2,
\end{align*}
and so 
\begin{align*}
	\dim(Y)^2=-2-\dim(Y).
\end{align*}
Replacing this into \eqref{eq: dim(Y)} we get 
 \begin{align*}
0	&=\dim(Y)^3-2\dim(Y)^2-\dim(Y)+1\\
&= \dim(Y)(-2-\dim(Y))-2(-2-\dim(Y))-\dim(Y)+1\\
&=-2\dim(Y)+2+\dim(Y)+4+2\dim(Y)-\dim(Y)+1\\
&=7.
\end{align*}
Thus we must have $p=7$. Since $\mathcal{K(C)}\cong \mathcal K(\Ver_7^+)$ we see that the fiber functor $F:\mathcal C \to \Ver_7$ gives an equivalence onto $\Ver_7^+$.
\end{proof}

\begin{remark}
	Theorem \ref{theo: rank 3} gives a positive answer for Question \ref{question 1} in the case $p=7$. 
\end{remark}

\section{Rank 4 symmetric fusion categories}\label{section: rank 4}

\subsection{Exactly two self-dual simple objects}
In this section we classify symmetric fusion categories $\mathcal C$ of rank $4$ with exactly two self-dual simple objects. Namely, we show that $\mathcal C$ is equivalent to one of the following:
\begin{itemize}
		\item If $p=2$, $\mathcal C\cong \operatorname{Vec}_{\mathbb Z_4}^{\mathbb Z_3}$ or $\mathcal C\cong \mathcal C\left(\mathbb Z_4,q\right)$, where $q:\mathbb Z_4\to \textbf k^{\times}$ is one of the two group maps satisfying $q(g)^2=1$ for all $g\in \mathbb Z_4$,  see \cite[Section 8.4]{EGNO}.
		\item If $p=3$,	 $\mathcal C\cong \mathcal C\left(\mathbb Z_4,q\right)$.
	\item  If $p>3$, $\mathcal C\cong \Rep(A_4)$ or $\mathcal C\cong \mathcal C\left(\mathbb Z_4,q\right)$.
\end{itemize}

We use the parametrization of based rings of rank 4 with exactly two self-dual basis elements as given in \cite{L}. Let $c, e, k, l, p, q$ be non-negative integers satisfying the following equations:
 \begin{align}\label{rank 4 1}
 	kl + lc = lp + kq,
 \end{align}
 \begin{align}\label{rank 4 2}
 	kp + le + kc = 2lq + k^2,
 \end{align}
\begin{align}\label{rank 4 3}
	l^2 + c^2 = 1 + q^2 + p^2,
\end{align}
 \begin{align}\label{rank 4 4}
 	l^2 + k^2 + q^2 = 1 + 2pk + qe.
 \end{align}
 We denote by $ K(c, e, k, l, p, q)$ the based ring with basis $1, X, Y, Z$ and multiplication given by
 \begin{align}\label{rank 4 rules}
 	\begin{aligned}
 	&X^2 = pX + lY + cZ && XY = Y X = qX + kY + lZ\\
 	&Y^2 = \mathbf 1 + kX + eY + kZ   &&Y Z = ZY = lX + kY + qZ\\
 	&Z^2 = cX + lY + pZ    &&XZ = ZX = \mathbf 1 + pX + qY + pZ.
 	\end{aligned}
 \end{align}
Any based ring of rank 4 with exactly two self-dual basis elements is of the form $ K(c, e, k, l, p, q)$ for some non-negative integers $c, e, k, l, p, q$ satisfying \eqref{rank 4 1}-\eqref{rank 4 4}, see \cite{L}.

When $p>2$, recall that for the second Adams operation $\psi_2: K(c, e, k, l, p, q) \to K(c, e, k, l, p, q)$ we have that $\psi_2(W)\equiv W^2 \mod 2$ for any $W\in K(c,e,k,l,p,q)$. Hence
\begin{align}\label{psi in rank 4}
&	\psi_2(X)=\alpha_1 X + \alpha_2 Y + \alpha_3 Z, && \psi_2(Y)=\epsilon\textbf 1+ \beta_1 X + \beta_2 Y + \beta_3 Z, && \psi_2(Z)=\gamma_1 X+\gamma_2 Y+\gamma_3 Z,
\end{align}
for some $\epsilon=\pm 1$ and $\alpha_i, \beta_i, \gamma_i \in \mathbb Z$, $i=1,2,3.$ Recall that $\psi_2(W)\equiv W^2 \mod 2$ and thus 
\begin{align*}
&	\alpha_1 \equiv p \equiv \gamma_3 \mod 2  &&\alpha_2\equiv l \equiv \gamma_2 \mod 2 && \alpha_3 \equiv c \equiv \gamma_1 \mod 2 \\ &\beta_1 \equiv k \equiv \beta_3 \mod 2 &&\beta_2 \equiv e \mod 2.
\end{align*}
We will use the previous congruences repeatedly throughout this section.

We start by discarding some possibilities for the Adams operation in the lemmas below.
\begin{lemma}\label{not Z}
	Let $\mathcal C$ be a symmetric fusion category of rank $4$ with exactly two self-dual simple objects. Then $\text{Im}({\psi_2}) \ne\mathbb Z$ . 
\end{lemma}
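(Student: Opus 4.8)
The goal is to show that for a rank-4 symmetric fusion category $\mathcal C$ with exactly two self-dual simple objects $\mathbf 1, Y$ (and a dual pair $X, Z=X^*$), the image of $\psi_2$ cannot be isomorphic to $\mathbb Z$.

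The plan is to argue by contradiction, assuming $\operatorname{Im}(\psi_2)\simeq \mathbb Z$, which forces $\psi_2$ to collapse everything into multiples of $\mathbf 1$. First I would invoke Proposition \ref{image is Q}, which applies verbatim since $\mathcal C$ has exactly two self-dual simple objects: if $\operatorname{Im}(\psi_2)\simeq \mathbb Z$ then $Y^2=\mathbf 1$ and $\psi_2(Y)=-\mathbf 1$, together with $[XX^*:Y]=1$ and $[XY:Y]=0$ for the non-self-dual simple $X$. Translating these conclusions into the parametrization $K(c,e,k,l,p,q)$ of \eqref{rank 4 rules}, the relation $Y^2=\mathbf 1$ reads $\mathbf 1 + kX + eY + kZ=\mathbf 1$, forcing $k=e=0$. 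Then the relation $[XX^*:Y]=[XZ:Y]=1$ reads off the coefficient $q$ in the expansion of $XZ$, giving $q=1$, while $[XY:Y]=0$ gives $k=0$ consistently.

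Next I would feed $k=e=0$ and $q=1$ back into the structure equations \eqref{rank 4 1}--\eqref{rank 4 4}. Equation \eqref{rank 4 4}, namely $l^2+k^2+q^2=1+2pk+qe$, becomes $l^2+1=1$, forcing $l=0$; and equation \eqref{rank 4 1}, $kl+lc=lp+kq$, is then automatically satisfied, while \eqref{rank 4 3}, $l^2+c^2=1+q^2+p^2$, becomes $c^2=2+p^2$. The remaining multiplication rules collapse dramatically: with $k=l=e=0$ and $q=1$ one has $XY=X$, $YZ=Z$ (so $Y$ acts as a unit on $X,Z$), and $XZ=\mathbf 1+pX+pZ$, $X^2=pX+cZ$, $Z^2=cX+pZ$.

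The contradiction should then come from the constraint $c^2=2+p^2$ together with positivity of Frobenius–Perron dimensions or the mod-$2$ congruences. Indeed $c^2-p^2=2$ has no solution in non-negative integers, since $(c-p)(c+p)=2$ forces $c-p=1, c+p=2$, giving non-integral $c,p$. I expect this purely number-theoretic impossibility to be the cleanest route, and it is also where I anticipate the main obstacle lies: one must be careful that the parametrization of \cite{L} is correctly matched (which basis element is self-dual, and the precise placement of $q$ and $c$), so that the extracted relations $k=e=0$, $q=1$, $l=0$ are genuinely forced rather than merely one admissible branch. Once \eqref{rank 4 3} is reduced to $c^2=p^2+2$, the absence of integer solutions closes the case and shows $\operatorname{Im}(\psi_2)\ne\mathbb Z$.
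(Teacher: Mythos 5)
Your proof is correct and follows essentially the same route as the paper: invoke Proposition \ref{image is Q} to force $k=e=0$ and $q=1$, then use equation \eqref{rank 4 4} to get $l=0$ and equation \eqref{rank 4 3} to reach $c^2=2+p^2$, which has no integer solutions. Your extra factorization argument $(c-p)(c+p)=2$ just makes explicit the number-theoretic step the paper leaves unstated.
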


\begin{proof}
	Suppose that $\text{Im}({\psi_2}) =\mathbb Z$. Then by Proposition \ref{image is Q} we have that $Y^2=\textbf 1$ and $[X Z:Y]=1$, which imply $k=e=0$ and $q=1$. But then from Equation \eqref{rank 4 4} we get that $l^2+1=1$, thus $l=0$. Thus Equation \eqref{rank 4 3} is $c^2=2+p^2$, which has no integer solutions. 
\end{proof}

\begin{lemma}\label{order 2}
	Let $\mathcal C$ be a symmetric fusion category of rank $4$ with exactly two self-dual simple objects. Then $\psi_2$ does not have order 2. 
\end{lemma}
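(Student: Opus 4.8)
The plan is to argue by contradiction from $\psi_2^2=\id$ (the possibility $\psi_2=\id$ is already excluded by Theorem \ref{psi not identity}), and to pin $\psi_2$ down completely before eliminating the few surviving configurations. First I would use that $\psi_2$ commutes with duality: since $X^*=Z$ this forces $\psi_2(Z)=\psi_2(X)^*$, so $\psi_2$ is determined on $\{X,Z\}$ by $\psi_2(X)=\alpha_1X+\alpha_2Y+\alpha_3Z$, and applying duality to the self-dual $Y$ gives $\beta_1=\beta_3$ in \eqref{psi in rank 4}. Imposing $\psi_2^2=\id$ and reading off coefficients of the basis elements of $\psi_2^2(X)$ and $\psi_2^2(Y)$ yields $\beta_2=-1$, $\alpha_2=0$, $\alpha_1^2+\alpha_3^2=1$ and $\alpha_1\alpha_3=0$. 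Hence $\psi_2(X)\in\{X,\,-X,\,Z,\,-Z\}$, leaving exactly four cases. Throughout I would keep the mod-$2$ congruences recorded after \eqref{psi in rank 4}, in particular $\alpha_1\equiv p$, $\alpha_2\equiv l$, $\alpha_3\equiv c \pmod 2$.

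Three of the four cases should fall quickly, using only the homomorphism identities $\psi_2(X^2)=\psi_2(X)^2$ and $\psi_2(XZ)=\psi_2(X)\psi_2(Z)$, the defining relations \eqref{rank 4 1}--\eqref{rank 4 4}, and the congruences above. Comparing the $\mathbf 1$- and $X$-coefficients on each side forces vanishing of some of $p,l,c$: for $\psi_2(X)=-X$ one gets $p=c=l=0$, incompatible with \eqref{rank 4 3} (which would read $0=1+q^2$); for $\psi_2(X)=X$ one gets $l=q=0$ and then \eqref{rank 4 3} gives $(c,p)=(1,0)$, contradicting that $p\equiv\alpha_1=1$ is odd; and for $\psi_2(X)=-Z$ one gets $c=0$, contradicting that $c\equiv\alpha_3=1$ is odd. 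Each is a short finite check.

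The genuine obstacle is the surviving case $\psi_2(X)=Z$, where the based-ring bookkeeping is internally consistent. Here one finds $l=q=0$, then \eqref{rank 4 3} forces $c=1$, \eqref{rank 4 2} forces $k=1$ (as $k=0$ would give the impossible $XY=0$), and matching $\psi_2(Y^2)=\psi_2(Y)^2$ forces $e=3$; thus $\mathcal C$ would have ring $K(1,3,1,0,0,0)$, a pointed $\mathbb Z_3=\{\mathbf 1,X,Z\}$ together with a self-dual $Y$ with $Y^2=\mathbf 1+X+3Y+Z$, on which $\psi_2$ is a bona fide order-$2$ automorphism, so no combinatorial contradiction remains. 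To close this I would switch to the categorical argument used in Theorem \ref{theo: rank 3}. Since $\FPdim(Y)=\tfrac{3+\sqrt{21}}{2}$ is irrational, $\mathcal C$ is non-integral, hence cannot lift to characteristic zero and must be degenerate; setting $\dim(\mathcal C)=3+\dim(Y)^2=0$ and using $\dim(Y)^2=3+3\dim(Y)$ (note $\dim(X)=\dim(Z)=1$ for the order-$3$ invertibles under the canonical spherical structure) gives $\dim(Y)=-2$ and then $p=7$. Finally I would apply the Verlinde functor $F\colon\mathcal C\to\Ver_7$: the order-$3$ invertibles $X,Z$ must map to $\mathbf 1$, since the invertibles of $\Ver_7$ are only $L_1,L_{6}$ and form $\mathbb Z_2$. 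Then $F(Y)^2=3\cdot\mathbf 1+3F(Y)$ forces $[F(Y)^2:\mathbf 1]=3$, so $F(Y)=L_{i_1}+L_{i_2}+L_{i_3}$ with distinct indices, and the contradiction is dimensional: $F$ preserves $\FPdim$, yet $\FPdim(F(Y))=[i_1]+[i_2]+[i_3]\in\mathbb Q(2\cos(\pi/7))$; if this equalled $\FPdim(Y)$ then $\sqrt{21}$ would lie in the degree-$3$ field $\mathbb Q(2\cos(\pi/7))$, impossible since $2\nmid 3$. I expect this last case to be the crux, because the based-ring obstruction disappears entirely and only the interplay of non-liftability with the arithmetic of dimensions in $\Ver_7$ rules it out.
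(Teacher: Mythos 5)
Your skeleton is essentially the paper's own: write $\psi_2$ in coordinates as in \eqref{psi in rank 4}, impose $\psi_2^2=\id$ together with the ring-homomorphism identities and the relations \eqref{rank 4 1}--\eqref{rank 4 4}, funnel everything into the Izumi--Xu ring $K(1,3,1,0,0,0)$, and then kill that ring by the arithmetic of dimensions through the Verlinde functor. Two of your choices are genuine improvements in organization: using $\psi_2(X^*)=\psi_2(X)^*$ to reduce at once to the four cases $\psi_2(X)\in\{X,-X,Z,-Z\}$ is cleaner than the paper's split into $\alpha_1\ne-\gamma_3$ versus $\alpha_1=-\gamma_3$ with nested subcases (and your three quick eliminations check out); and your endgame, which first forces $p=7$ via the degeneracy/lifting argument of Theorem \ref{theo: rank 3} and only then invokes the degree count $2\nmid 3$, makes explicit something the paper leaves implicit --- for unknown $p$ one needs that $\sqrt{21}$ lies in no $\mathbb Q(\zeta_p)$, which is a conductor argument, since $(p-1)/2$ may well be even.

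There is, however, a genuine gap in your surviving case: matching $\psi_2(Y^2)=\psi_2(Y)^2$ does \emph{not} force $e=3$. With $\psi_2(X)=Z$, $\beta_2=-1$, $\beta_1=\beta_3=\beta$, and $l=q=p=0$, $c=k=1$, that identity (and all of \eqref{rank 4 1}--\eqref{rank 4 4}, duality, and $\psi_2^2=\id$) is also satisfied by $(e,\beta,\epsilon)=(1,0,1)$, i.e.\ by the based ring $K(1,1,1,0,0,0)$ with the involution $\psi_2(Y)=\mathbf 1-Y$. This phantom case defeats your endgame: there $\FPdim(Y)=\frac{1+\sqrt{13}}{2}$, so the category is again non-integral, hence degenerate; the dimension homomorphism gives $\dim(Y)=-6$ and then $39=0$, so $p=13$ (or $3$); and for $p=13$ there is no field obstruction whatsoever, since $13\equiv 1 \pmod 4$ gives $\sqrt{13}\in\mathbb Q(\zeta_{13}+\zeta_{13}^{-1})$. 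What eliminates this configuration is the one homomorphism identity you never impose, $\psi_2(XY)=\psi_2(X)\psi_2(Y)$: since $XY=Y$, comparing coefficients of $\mathbf 1$ in $\psi_2(Y)=Z\cdot\psi_2(Y)$ forces $\beta=\epsilon\ne 0$, hence $\beta=\epsilon=1$ and $e=3$. This is exactly how the paper pins down the Izumi--Xu ring, and your write-up must include it. Two smaller blemishes: the step ``$[F(Y)^2:\mathbf 1]=3$, so $F(Y)=L_{i_1}+L_{i_2}+L_{i_3}$'' tacitly assumes $[F(Y):\mathbf 1]=0$, which you have not justified ($F$ is faithful, not full) --- but it is also unnecessary, since $\FPdim$ of \emph{every} object of $\Ver_7$ lies in the degree-$3$ field, so $\FPdim(F(Y))=\FPdim(Y)$ already yields the contradiction; and the division by $3$ in solving $6+3\dim(Y)=0$ needs $p\ne 3$ (for $p=3$, degeneracy gives $\dim(Y)^2=\dim(\mathcal C)=0$, contradicting that simple objects have nonzero dimension).
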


\begin{proof}
	Suppose for the sake of contradiction that $\psi_2^2=\text{Id}$.  Then from $\psi_2^2(X)=X$ and Equation \eqref{psi in rank 4}, we get the equations 
	\begin{align*}
		&0=\alpha_2\epsilon, &&1=\alpha_1^2+\alpha_2\beta_1+\alpha_3\gamma_1, &&0=\alpha_1\alpha_3 + \alpha_2 \beta_3 + \alpha_3 \gamma_3.
	\end{align*}
	Thus $\alpha_2=0$ and
	\begin{align}\label{order 2 1}
			 &1=\alpha_1^2 +\alpha_3\gamma_1, &&0= \alpha_3(\alpha_1 + \gamma_3).
	\end{align}
Similarly, from $\psi_2^2(Z)= Z$ we get that $\gamma_2=0$ and 
\begin{align}\label{order 2 2}
	&1 = \gamma_1\alpha_3 +\gamma_3^2, &&0=\gamma_1(\alpha_1+\gamma_3).
\end{align} 
	We divide the rest of the proof in two cases. 
	
	$\blacktriangledown$ Case 1: $\alpha_1\ne - \gamma_3$. Then by Equations \eqref{order 2 1} and \eqref{order 2 2} we must have $\alpha_3=0=\gamma_1$ and $\alpha_1^2=1=\gamma_3^2$, so $\gamma_3=\alpha_1=\pm 1$.  Then
	\begin{align*}
		\epsilon l = [\psi_2(X^2) : \mathbf 1] = [\psi_2(X)^2:\mathbf 1]= \alpha_2^2+2\alpha_1 \alpha_3=0,
	\end{align*}
	which implies $l=0$. On the other hand,
	\begin{align*}
		1+q\epsilon= [\psi_2(XZ) : \mathbf 1] = [\psi_2(X)\psi_2(Z):\mathbf 1]= \alpha_1\gamma_3  =1,
	\end{align*}
	which implies $q=0$. But then by Equation \eqref{rank 4 3} we have $p=0$ which contradicts $p\equiv \alpha_1 \mod 2$.

	$\blacktriangledown$ Case 2: $\alpha_1= -\gamma_3$. Then
		\begin{align*}
	&	\epsilon l = [\psi_2(X^2) : \mathbf 1] = [\psi_2(X)^2:\mathbf 1]= 2\alpha_1 \alpha_3=-2\gamma_3\alpha_3,\\
	&	\epsilon l = [\psi_2(Z^2) : \mathbf 1] = [\psi_2(Z)^2:\mathbf 1]= 2\gamma_1 \gamma_3,
	\end{align*}
and so $\gamma_1\gamma_3=-\gamma_3\alpha_3$. Suppose first that $\gamma_3\ne 0$. Then $\gamma_1=-\alpha_3$ and so by  Equation \eqref{order 2 1}
\begin{align*}
	1=\alpha_1^2 -\alpha_3^2,
\end{align*}
 which implies $\alpha_1=\pm 1$ and $\alpha_3=0=\gamma_1$. But then 
	\begin{align*}
	\epsilon l = [\psi_2(X^2) : \mathbf 1] = [\psi_2(X)^2:\mathbf 1]=0,
\end{align*}
so $l=0$ and 
	\begin{align*}
	p\alpha_1 = [\psi_2(X^2) : X] = [\psi_2(X)^2:X]= \alpha_1^2p=p.
\end{align*}
Since $p\equiv \alpha_1\equiv 1 \mod 2$ this implies $\alpha_1=1$, and thus $\gamma_3=-1$. But then
	\begin{align*}
		&[\psi_2(Z^2) : Z] =[\psi_2(cX+pZ):Z]=c\alpha_3+p\gamma_3=-p,\\
&[\psi_2(Z)^2:Z]= [(\gamma_3 Z)^2:Z]=\gamma_3^2 p=p,
\end{align*}
since $l,\alpha_3, \gamma_1$ and $\gamma_2$ are all 0 and $\gamma_3=-1$. Since the two equations above should be equal, then $p=0$ which contradicts $p\equiv 1 \mod 2$. 

The contradiction came from assuming $\gamma_3\ne 0$, and thus we should have $\gamma_3=0=\alpha_1$. Hence from the equations
\begin{align*}
	&[\psi_2(X^2) : \mathbf 1] =[\psi_2(pX+lY+cZ):\mathbf 1]=\epsilon l,\\
	&[\psi_2(X)^2: \mathbf 1] = [(\alpha_3Z)^2:\mathbf 1]=0
\end{align*}
 we get that $l=0$. On the other hand, by Equation \eqref{order 2 1} we have that $\alpha_3\gamma_1=1$, and so
\begin{align*}
&[\psi_2(XZ) : \mathbf 1] =[\psi_2(\textbf 1+pX+qY+pZ):\mathbf 1]=1+q\epsilon,\\
 &[\psi_2(X)\psi_2(Z):\mathbf 1]=[(\alpha_3Z)(\gamma_1X):\mathbf 1]=\alpha_3\gamma_1=1,
\end{align*}
 hence $q=0$. Then from Equations \eqref{rank 4 3} and \eqref{rank 4 4} we conclude $k=1=c$ and $p=0$. 	Moreover, $\psi_2^2(Y)=Y$ implies $0=\epsilon(\beta_2+1)$ and so $\beta_2=-1$. Using also that $\psi_2(XY)=\psi_2(X)\psi_2(Y)$ and $\psi_2(X)=\alpha_3(Z)$, we obtain
 	\begin{align*}
 	\epsilon &= [\psi_2(XY) : \mathbf 1] = [\psi_2(X)\psi_2(Y):\mathbf 1] =\alpha_3\beta_1,\\
 	\beta_1 &= [\psi_2(XY) : X] = [\psi_2(X)\psi_2(Y):X] =\alpha_3\beta_3,\\
 	-1&= [\psi_2(XY) : Y] = [\psi_2(X)\psi_2(Y):Y] =-\alpha_3\\
 	\beta_3 &= [\psi_2(XY) : Z] = [\psi_2(X)\psi_2(Y):Z] =\epsilon \alpha_3,
\end{align*}
 	which imply $1=\alpha_3=\gamma_1$ and $\epsilon=\beta_1=\beta_3$. Lastly
 	\begin{align*}
 		e	\epsilon &= [\psi_2(Y^2) : 1] = [\psi_2(Y)^2:1] =\beta_2^2+2\beta_1\beta_3=3.
 	\end{align*}

  Thus the fusion rules for $\mathcal C$ would be 
 \begin{align}\label{izumi-xu}
\begin{aligned}
	 	&X^2= Z &&XY=Y,\\
 	&Y^2=1 + X +3Y + Z &&ZY=Y,\\
 	&Z^2=X &&ZX=1.
\end{aligned}
 \end{align}
 These are the fusion rules of the Izumi-Xu category (see \cite{CMS}).
 The Frobenius Perron dimension of $\mathcal C$ is $\frac{21+2\sqrt{21}}{2}$, which is not possible in positive characteristic. In fact, dimensions in $\Ver_p$ are in the field $\mathbb Q(z+z^{-1})$, see \cite[Theorem 4.5 (iv)]{BEO}. Since we have a symmetric fiber functor $F:\mathcal C\to \Ver_p$, the same should be true for $\mathcal C$, which makes the obtained dimension  impossible. 
\end{proof}

\begin{remark}\label{Remark on Adams operation}
The Adams operation is not enough on its own to classify symmetric fusion categories in positive characteristic. In fact, in the proof of the previous Lemma we found a possible based ring \eqref{izumi-xu} and a suitable Adams operation, given by
\begin{align*}
	\psi_2(X)=Z, && \psi_2(Y)=\mathbf 1+X-Y+Z, &&\psi_2(Z)=X.
\end{align*}
However, as stated, there is no fusion category over a field of positive characteristic with \eqref{izumi-xu} as its Grothendieck ring.
\end{remark}

\begin{lemma}
	Let $\mathcal C$ be a symmetric fusion category of rank $4$ with exactly two self-dual simple objects. Then $\psi_2^2\ne \psi_2$. 
\end{lemma}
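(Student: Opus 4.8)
The plan is to argue by contradiction, assuming $\psi_2^2=\psi_2$, and to extract linear and quadratic constraints on the parameters exactly as in Lemmas \ref{not Z} and \ref{order 2}. First I would record the shape of $\psi_2$ from \eqref{psi in rank 4}. Since $\psi_2$ commutes with duality and $Z=X^*$, $Y=Y^*$, comparing $\psi_2(Z)=\psi_2(X^*)=\psi_2(X)^*$ forces $\gamma_1=\alpha_3$, $\gamma_2=\alpha_2$, $\gamma_3=\alpha_1$, while $\psi_2(Y)=\psi_2(Y)^*$ forces $\beta_1=\beta_3$; write $\beta:=\beta_1=\beta_3$. I would keep the mod $2$ congruences ($\alpha_1\equiv p$, $\alpha_2\equiv l$, $\alpha_3\equiv c$, $\beta\equiv k$, $\beta_2\equiv e$) at hand throughout.

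Next I would impose idempotency on the non-unit generators. Expanding $\psi_2^2(X)=\psi_2(X)$ and reading off coefficients: the coefficient of $\mathbf 1$ gives $\alpha_2\epsilon=0$, hence $\alpha_2=0$; the coefficient of $Z$ then gives $\alpha_3(2\alpha_1-1)=0$, and since $2\alpha_1-1$ is odd (hence nonzero) we get $\alpha_3=0$; the coefficient of $X$ gives $\alpha_1^2=\alpha_1$, so $\alpha_1\in\{0,1\}$. The identity $\psi_2^2(Z)=\psi_2(Z)$ carries no new information, being dual to that for $X$. Then $\psi_2^2(Y)=\psi_2(Y)$, with $\psi_2(X)=\alpha_1X$ and $\psi_2(Z)=\alpha_1Z$ already known, yields $\beta_2=0$ from the $\mathbf 1$-coefficient and $\beta(\alpha_1-1)=0$ from the $X$-coefficient. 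At this point $\psi_2(X)=\alpha_1 X$, $\psi_2(Z)=\alpha_1 Z$ and $\psi_2(Y)=\epsilon\mathbf 1+\beta(X+Z)$.

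Now I would use that $\psi_2$ is a ring homomorphism together with the fusion rules \eqref{rank 4 rules}. From $\psi_2(X^2)=\psi_2(X)^2$, the coefficient of $\mathbf 1$ is $l\epsilon$ on the left and $0$ on the right, forcing $l=0$ (independently of the value of $\alpha_1$). From $\psi_2(XZ)=\psi_2(X)\psi_2(Z)=\alpha_1^2 XZ=\alpha_1 XZ$, the coefficient of $Y$ gives $\alpha_1 q=0$ and the coefficient of $\mathbf 1$ gives $1+q\epsilon=\alpha_1$. This splits into two cases. If $\alpha_1=0$, then $1+q\epsilon=0$ forces $q=1$ (and $\epsilon=-1$), whereupon \eqref{rank 4 3} reads $c^2=2+p^2$, which has no integer solution since $(c-p)(c+p)=2$ is impossible for integers of equal parity; equivalently, one checks that $\psi_2(Y)=\epsilon\mathbf 1$, so $\text{Im}(\psi_2)\simeq\mathbb Z$, contradicting Lemma \ref{not Z}. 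If instead $q=0$, then $\alpha_1=1$, so the congruence $p\equiv\alpha_1\pmod 2$ makes $p$ odd; but with $l=q=0$ equation \eqref{rank 4 3} becomes $c^2=1+p^2$, i.e.\ $(c-p)(c+p)=1$, which forces $c=1,\,p=0$, contradicting $p$ odd. In either case we reach a contradiction, so $\psi_2^2\ne\psi_2$.

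I expect the main obstacle to be organizational rather than conceptual: keeping the several homomorphism identities and parity congruences consistent, and making sure the $\mathbf 1$- and $Y$-coefficients are extracted correctly, as these are the ones that collapse the system. The genuinely decisive inputs are elementary: that an odd integer such as $2\alpha_1-1$, or an odd $p$, cannot vanish, and that the Pell-type relation \eqref{rank 4 3} admits only the degenerate solution $c=1,\,p=0$. The one subtlety to double-check is that establishing $\beta_2=0$ (via $\psi_2^2(Y)=\psi_2(Y)$) precedes the use of the $Y$-coefficient in $\psi_2(XZ)=\psi_2(X)\psi_2(Z)$, since otherwise that coefficient would read $q(\beta_2-\alpha_1)=0$ rather than the cleaner $\alpha_1 q=0$.
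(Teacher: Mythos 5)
Your proof is correct, and it follows the same overall strategy as the paper's: assume $\psi_2^2=\psi_2$, extract coefficient identities from idempotency and from the ring-homomorphism property, and reach a contradiction via the Diophantine constraints \eqref{rank 4 1}--\eqref{rank 4 4} and the mod-$2$ congruences. Two tactical choices make your version genuinely leaner, though. First, you impose duality ($\gamma_1=\alpha_3$, $\gamma_2=\alpha_2$, $\gamma_3=\alpha_1$, $\beta_1=\beta_3$) at the outset; the paper does not, and consequently must extract $\gamma_1=\gamma_2=0$ from $\psi_2^2(Z)=\psi_2(Z)$ and then recover $\gamma_3=\alpha_1$ from $[\psi_2(X^2):Z]=[\psi_2(X)^2:Z]$, which needs the extra observation that $c\ne 0$ by \eqref{rank 4 3}. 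Second, your decisive relations $\alpha_1 q=0$ and $1+q\epsilon=\alpha_1$ come from $\psi_2(XZ)=\psi_2(X)\psi_2(Z)$, whereas the paper works with $\psi_2(XY)=\psi_2(X)\psi_2(Y)$; the paper's choice forces a sub-case split in its $\alpha_1=\gamma_3=1$ case (either $q=0$ or $\beta_1=\beta_3=0$, the latter needing a further pass through \eqref{rank 4 4}), which your relation $1+q\epsilon=\alpha_1$ bypasses entirely, and in the $\alpha_1=0$ case it gives $q=1$ immediately rather than via $k=0$ and \eqref{rank 4 4}. Both arguments terminate at the same pair of impossibilities --- $c^2=2+p^2$ has no integer solutions, and $c^2=1+p^2$ forces $c=1$, $p=0$ against a parity congruence (you use $p\equiv\alpha_1\equiv 1$, the paper uses $c\equiv\alpha_3\equiv 0$; these are interchangeable) --- so the two routes buy the same conclusion, yours with less bookkeeping. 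Your closing remark about needing $\beta_2=0$ before reading off the $Y$-coefficient of $\psi_2(XZ)$ is also accurate: without it that coefficient reads $q(\beta_2-\alpha_1)=0$, which is weaker.
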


\begin{proof}
	Suppose that $\psi_2^2= \psi_2$. From the equality $\psi_2^2(X)= \psi_2(X)$ we get the equations 
	\begin{align*}
		&0=\alpha_2\epsilon, &&\alpha_1=\alpha_1^2+\alpha_2\beta_1+\alpha_3\gamma_1, &&\alpha_3=\alpha_1\alpha_3 + \alpha_2 \beta_3 + \alpha_3 \gamma_3.
	\end{align*}
Thus $\alpha_2=0$ and $\alpha_3= \alpha_3(\alpha_1 + \gamma_3)$. If $\alpha_3 \ne 0$ then $\alpha_1+\gamma_3 =1$  and so $p+p\equiv 1 \mod 2$ which is a contradiction. Thus we have $\alpha_3=0$ and $\alpha_1=1$ or $0$. Analogously, from $\psi_2^2(Z)= \psi_2(Z)$ we get $\gamma_2=0=\gamma_1$ and $\gamma_3=1$ or $0$. Lastly, from $\psi_2^2(Y)= \psi_2(Y)$ we have that $\beta_2=0$. 

On the other hand,
\begin{align*}
	\epsilon l = [\psi_2(X^2) : \mathbf 1] = [\psi_2(X)^2:\mathbf 1]= \alpha_2^2+2\alpha_1 \alpha_3=0,
\end{align*}
which implies $l=0$, and so
\begin{align*}
c\gamma_3 = [\psi_2(X^2):Z]  = [\psi_2(X)^2: Z] = c \alpha_1^2.	
\end{align*}
If $c=0$ then $0= 1+ q^2 + p^2$ by Equation \eqref{rank 4 3}, which is not possible. Thus $\gamma_3= \alpha_1^2= \alpha_1$. We divide the rest of the proof in two cases:

$\blacktriangledown$ Case 1: $\alpha_1=\gamma_3 =0$. Then 
\begin{align*}
	\epsilon k = [\psi_2(XY) :1] = [\psi_2(X) \psi_2(Y): 1] = \alpha_1\beta_3 + \alpha_3\beta_1 + \alpha_2\beta_2=0,
\end{align*}
and so $k=0$. Then by Equation \eqref{rank 4 4} we have $q^2= 1+qe$, thus $q=1$ and so $c^2=2+p^2$ by Equation \eqref{rank 4 3}, which has no integer solutions.

$\blacktriangledown$ Case 2: $\alpha_1=\gamma_3 =1$. 
Then
\begin{align*}
	&0 = [\psi_2(XY) :Y] = [\psi_2(X) \psi_2(Y): Y] =  q\beta_3,
\end{align*}
so $q=0$ or $0=\beta_3=\beta_1$. If $q=0$ then $c^2= 1 + p^2$ by Equation \eqref{rank 4 3} and so $c=1$ and  $p=0$. Then $1\equiv c \equiv \alpha_3 \equiv 0 \mod 2$, a contradiction. Thus we must have $0=\beta_3 $ and so 
\begin{align*}
	&\epsilon k = [\psi_2(XY) :\mathbf 1] = [\psi_2(X) \psi_2(Y): \mathbf 1] = \beta_3 =0,
\end{align*}
which implies $k=0$. Since  $q^2= 1+qe$ by Equation \eqref{rank 4 4}  we get $q=1$. But then  $c^2=2+p^2$ by Equation \eqref{rank 4 3}, which has no integer solutions. 
\end{proof}

We will need the following auxiliary lemma.
\begin{lemma}\label{not real K(C)}
	If $\mathcal C$ is a fusion category with commutative $\mathcal{K(C)}$ and a non-self-dual object, then there exists a ring homomorphism $\mathcal{K(C)}\to \mathbb C$ whose image is not contained in $\mathbb R$. 
\end{lemma}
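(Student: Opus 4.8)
The plan is to argue by contradiction. I will assume that every algebra homomorphism $\mathcal{K(C)} \to \mathbb C$ takes values in $\mathbb R$ and deduce that every simple object is self-dual, contradicting the hypothesis. The engine behind this is the identification of the duality involution $X \mapsto X^*$ with complex conjugation at the level of characters.

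First I would complexify. Since $\mathcal{K(C)}$ is commutative and its left-regular representation is faithful (left multiplication by $a$ sends $\mathbf 1$ to $a$), the algebra $\mathcal{K(C)}_{\mathbb C}=\mathcal{K(C)}\otimes_{\mathbb Z}\mathbb C$ embeds as a commutative subalgebra of $\End_{\mathbb C}(\mathcal{K(C)}_{\mathbb C})\cong M_n(\mathbb C)$, where $n=\rank(\mathcal C)$. Endow $\mathcal{K(C)}_{\mathbb C}$ with the Hermitian inner product for which the simple classes $\{X_i\}$ form an orthonormal basis. The key computation is that, by the Frobenius reciprocity identity $[X_iX_j:X_k]=[X_j:X_i^*X_k]$ (valid in any fusion category by rigidity), left multiplication by $X_i^*$ is the adjoint of left multiplication by $X_i$ with respect to this inner product. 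Hence the image of $\mathcal{K(C)}_{\mathbb C}$ is closed under taking adjoints, so it is a commutative $C^*$-algebra whose $*$-involution is $\sum a_iX_i\mapsto \sum \overline{a_i}X_i^*$. In particular $\mathcal{K(C)}_{\mathbb C}\cong \mathbb C^{n}$ is semisimple, its algebra homomorphisms to $\mathbb C$ are exactly the $n$ coordinate characters, and these separate the points of $\mathcal{K(C)}$ (as $\mathcal{K(C)}$ is free abelian and embeds in $\mathcal{K(C)}_{\mathbb C}$).

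Next I would record the structural consequence that every character $\chi$ is automatically a $*$-homomorphism, i.e. $\chi(X^*)=\overline{\chi(X)}$ for all $X$. This is the standard fact that a character of a commutative $C^*$-algebra is evaluation at a point of its spectrum; equivalently, the commuting, adjoint-closed family of fusion matrices is simultaneously unitarily diagonalizable, and a common eigenvector with eigenvalue $\chi(X_i)$ for $L_{X_i}$ is an eigenvector with eigenvalue $\overline{\chi(X_i)}$ for the adjoint $L_{X_i^*}$. With this in hand the conclusion is quick: under the contradiction hypothesis each $\chi(X_i)$ is real, so $\chi(X_i)=\overline{\chi(X_i)}=\chi(X_i^*)$, whence $\chi(X_i-X_i^*)=0$ for every character $\chi$. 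Since characters separate points, $X_i=X_i^*$ for all $i$, contradicting the existence of a non-self-dual simple object.

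The main obstacle is the middle step: verifying cleanly that the duality involution coincides with the operator adjoint, so that $\mathcal{K(C)}_{\mathbb C}$ is a commutative $C^*$-algebra. Everything after that is formal. The content is that the structure constants $[X_iX_j:X_k]$ are real (indeed nonnegative integers) together with Frobenius reciprocity, which is precisely what forces the fusion matrices to be normal and adjoint-closed; once the $C^*$-structure is in place, the relation $\chi(X^*)=\overline{\chi(X)}$ and the final separation-of-points argument are routine.
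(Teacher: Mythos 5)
Your proof is correct, and it reaches the lemma by a route that is organized quite differently from the paper's, even though both hinge on the same crucial input: Frobenius reciprocity $[X_iX_j:X_k]=[X_j:X_i^*X_k]$, which says that left multiplication by $X_i^*$ is the transpose (equivalently, the Hermitian adjoint, since the structure constants are real) of left multiplication by $X_i$. The paper argues directly and minimally: for a single non-self-dual object $X$, the matrix of multiplication by $X-X^*$ in the basis of simples is a nonzero real skew-symmetric matrix, hence has a nonzero purely imaginary eigenvalue $\lambda$, and in a finite-dimensional commutative algebra over $\mathbb C$ every eigenvalue of a multiplication operator is the value of some character; this produces the desired non-real character value in one stroke. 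You instead argue by contradiction and first erect the full structural picture: the commuting, adjoint-closed family of fusion matrices makes $\mathcal{K(C)}_{\mathbb C}$ a commutative $C^*$-algebra, hence semisimple and isomorphic to $\mathbb C^n$, with every character automatically a $*$-homomorphism, so that $\chi(X^*)=\overline{\chi(X)}$; reality of all characters then forces every simple to be self-dual (and hence, by additivity of duality, every object to be self-dual, which is what actually contradicts the stated hypothesis). Your approach is heavier machinery for this particular statement, but it buys more: it establishes the semisimplicity of the complexified fusion algebra and the general fact that duality is intertwined with complex conjugation under every character, facts the paper's single-matrix argument never needs. Both proofs are complete; the paper's is the more economical path to exactly the stated conclusion, while yours is the natural path if one wants the stronger structural statements as reusable tools.
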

\begin{proof}
	Let $X\in \mathcal C$ be a non-self-dual object, and consider the map of multiplication by $X-X^*$ in $\mathcal{K(C)}_{\mathbb C}$. In the basis given by simple objects,  we can represent this map by a non-trivial skew symmetric matrix. Thus its eigenvalues are zero or non-real. Since $X\not\cong X^*$ there must exist at least one non-real eigenvalue $\lambda\ne 0$. Hence $\mathcal{K(C)}_{\mathbb C}$ has a $1$ dimensional representation where $X$ acts as multiplication by $\lambda$.
\end{proof}

\begin{theorem}\label{rank 4 integral}
	Let $\mathcal C$ be a symmetric fusion category of rank 4 with exactly 2 self-dual simple objects. Then $\mathcal C$ is integral.
\end{theorem}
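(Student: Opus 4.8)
The plan is to pin down the $\mathbb{Q}$-algebra $\mathcal{K(C)}_{\mathbb Q}$ and to show that the Frobenius--Perron character lands in $\mathbb{Q}$. First I would dispose of the super Tannakian case directly: a symmetric tensor functor $\mathcal C\to\operatorname{sVec}$ preserves $\FPdim$, and every object of $\operatorname{sVec}$ has integral $\FPdim$, so a super Tannakian $\mathcal C$ is automatically integral. This settles $p\in\{0,2,3\}$, and I may assume $\mathcal C$ is not super Tannakian; then $p>2$ and the second Adams operation is available. Since $\mathcal C$ is symmetric, $\mathcal{K(C)}$ is commutative, so $\mathcal{K(C)}_{\mathbb Q}$ is a commutative semisimple $\mathbb{Q}$-algebra of dimension $4$, hence a product $\prod_i F_i$ of number fields. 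The crucial global input is that all Frobenius--Perron dimensions are totally real: the Verlinde fiber functor $F\colon\mathcal C\to\Ver_p$ preserves $\FPdim$, so $\FPdim(W)=\FPdim_{\Ver_p}(FW)\in\mathbb Q(z+z^{-1})$ by \cite[Theorem 4.5 (iv)]{BEO}, and $\mathbb Q(z+z^{-1})$ is totally real.

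Next I would count the characters (algebra homomorphisms $\mathcal{K(C)}_{\mathbb Q}\to\mathbb C$), of which there are $\dim_{\mathbb Q}\mathcal{K(C)}_{\mathbb Q}=4$. Duality induces complex conjugation on this set (this is exactly the input used in Lemma \ref{not real K(C)}, where $M_{X^\ast}=M_X^{\mathsf T}$), so the non-real characters occur in conjugate pairs and the number of real characters is even. The Frobenius--Perron character is real, and Lemma \ref{not real K(C)}, applied to the non-self-dual object $X$, produces at least one non-real character; hence there are exactly two real characters and exactly one conjugate pair of non-real ones. Let $F_0$ be the factor through which $\FPdim$ factors. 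Since the simple objects generate $\mathcal{K(C)}_{\mathbb Q}$, their $\FPdim$-values generate all of $F_0$, so by total reality $F_0$ is totally real; its real embeddings lie among the two real characters, whence $[F_0:\mathbb Q]\le 2$. If $[F_0:\mathbb Q]=1$, then $\FPdim$ is rational-valued and $\mathcal C$ is integral. If $[F_0:\mathbb Q]=2$, then $F_0=\mathbb Q(\sqrt m)$ exhausts both real characters, so the remaining factor has $\mathbb Q$-dimension $2$ and only non-real embeddings; thus $\mathcal{K(C)}_{\mathbb Q}\cong\mathbb Q(\sqrt m)\oplus\mathbb Q(\sqrt{-n})$ for non-squares $m,n>0$.

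It remains to exclude this last configuration using the Adams operation. I would view $(\psi_2)_{\mathbb Q}$ as an algebra endomorphism of $A\oplus B$ with $A=\mathbb Q(\sqrt m)$ and $B=\mathbb Q(\sqrt{-n})$; it is determined by the images of the primitive idempotents $e_A,e_B$, which must be orthogonal idempotents summing to $1$. Any pattern mixing or collapsing the two summands would require a unital ring homomorphism $A\to B$ or $B\to A$; but a positive non-square has no square root in an imaginary quadratic field (and conversely), and $A\not\cong B$, so no such homomorphism exists. Hence $(\psi_2)_{\mathbb Q}$ preserves each summand and is of the form $\tau\oplus\sigma$ with $\tau\in\operatorname{Aut}(A)$, $\sigma\in\operatorname{Aut}(B)$; as $\operatorname{Aut}(A)$ and $\operatorname{Aut}(B)$ each have order $2$, the map $\tau\oplus\sigma$ is either the identity or an involution. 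The identity is excluded by Theorem \ref{psi not identity}, and an order-two $\psi_2$ is excluded by Lemma \ref{order 2}. This contradiction rules out $[F_0:\mathbb Q]=2$, leaving $F_0=\mathbb Q$ and proving that $\mathcal C$ is integral.

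The main obstacle I anticipate is the bookkeeping in the middle step: showing cleanly that there is exactly one conjugate pair of non-real characters and that the Frobenius--Perron factor $F_0$ is totally real of degree at most $2$, so that the entire problem collapses to the single algebra $\mathbb Q(\sqrt m)\oplus\mathbb Q(\sqrt{-n})$. Once that reduction is secured, the final contradiction is soft, resting only on the non-existence of embeddings between a real and an imaginary quadratic field together with the already-established facts that $\psi_2$ is neither the identity nor an involution.
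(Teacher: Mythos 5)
Your proof is correct, and it takes a genuinely different route from the paper's. The paper proves integrality by enumerating all five possible shapes of the semisimple algebra $\mathcal{K(C)}_{\mathbb Q}$ and disposing of each separately: rationality of $\FPdim$ for $\mathbb Q^{\oplus 4}$ and $\mathbb Q(\sqrt n)\times\mathbb Q\times\mathbb Q$; the identity/involution dichotomy (Theorem \ref{psi not identity} plus Lemma \ref{order 2}) for $\mathbb Q(\sqrt n)\times\mathbb Q(\sqrt m)$; reality of $\FPdim$ plus Lemma \ref{not real K(C)} for $\mathbb Q(\sqrt n)\times\mathbb Q(\sqrt n)$; Lemma \ref{not Z} together with Theorem \ref{even order} for $\mathbf F\times\mathbb Q$ with $\mathbf F$ cubic; and, in the hardest case of a quartic field, an explicit computation with the minimal polynomial of $Y$ and the Vieta formulas modulo $2$. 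You instead front-load two structural facts: the character set of $\mathcal{K(C)}_{\mathbb Q}$ has exactly two real members (closure under conjugation, reality of $\FPdim$, and Lemma \ref{not real K(C)}), and the factor $F_0$ through which $\FPdim$ factors is totally real, because the Verlinde fiber functor forces all Frobenius--Perron dimensions into $\mathbb Q(z+z^{-1})$ --- an input the paper itself uses, but only at the very end of the proof of Lemma \ref{order 2}, on which both proofs rely. These two facts collapse the case analysis to the single algebra $\mathbb Q(\sqrt m)\oplus\mathbb Q(\sqrt{-n})$, which your idempotent argument then kills with the same identity/involution dichotomy. The net gain is that you never need Lemma \ref{not Z}, Theorem \ref{even order}, or the quartic-field Vieta computation (in your setup a quartic field would have to be totally real, giving four real characters and contradicting the count of two), and you explicitly handle the super Tannakian case covering $p\in\{0,2,3\}$, where $\psi_2$ is unavailable or unnecessary --- a point the paper's proof leaves implicit. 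What the paper's longer route buys is a concrete inventory of which algebra structures survive each tool, in the same style it reuses for the all-self-dual rank-$4$ propositions at the end of Section \ref{section: rank 4}.
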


\begin{proof}
We proceed by looking at the different possibilities for $\mathcal{K(C)}_{\mathbb Q}$. Since $\mathcal{K(C)}_{\mathbb Q}$ is a semisimple commutative $\mathbb Q$-algebra of dimension $4$, we have five cases:

$\blacktriangledown$ Case 1:  $\mathcal{K(C)}_{\mathbb Q}\cong \mathbb Q \oplus \mathbb Q \oplus \mathbb Q \oplus \mathbb Q$.  Note that $\mathbb Q$-algebra maps $\mathcal{K(C)}_{\mathbb Q} \to \mathbb C$ are projections to $\mathbb Q$, and so this case is not possible by Lemma \ref{not real K(C)}.

$\blacktriangledown$ Case 2:  $\mathcal{K(C)}_{\mathbb Q}\cong \mathbb Q(\sqrt{n}) \oplus \mathbb Q \oplus\mathbb Q$. By Lemma \ref{not real K(C)} we must have $n<0$. But then $\FPdim_{\mathbb Q}: \mathcal{K(C)}_{\mathbb Q}\to \mathbb R$ can only have rational image and so $\mathcal C$ is integral.

$\blacktriangledown$ Case 3:  $\mathcal{K(C)}_{\mathbb Q}\cong \mathbb Q(\sqrt{n}) \oplus \mathbb Q(\sqrt{m})$ with $\mathbb Q(\sqrt n)\not\cong \mathbb Q(\sqrt m)$. Endomorphisms of $\mathbb Q(\sqrt{n}) \oplus \mathbb Q(\sqrt{m})$ are given by
\begin{align*}
	(1,0)\mapsto (1,0) &&(0,1)\mapsto (0,1) &&(\sqrt{n},0)\mapsto (\pm\sqrt{n},0) &&(0,\sqrt{m})\mapsto (0,\pm \sqrt{m}).
\end{align*}
These are all automorphisms of order 1 or 2, which is not possible for $(\psi_2)_{\mathbb Q}$ by Lemma \ref{order 2}.
Hence this case is discarded.

$\blacktriangledown$ Case 3:  $\mathcal{K(C)}_{\mathbb Q}\cong \mathbb Q(\sqrt{n}) \oplus \mathbb Q(\sqrt{n})$. By Lemma \ref{not real K(C)} we have $n<0$. But $\mathbb Q$-algebra morphisms from  $\mathbb Q(\sqrt{n}) \oplus \mathbb Q(\sqrt{n}) $ to $\mathbb C$  are embeddings onto $\mathbb Q(\sqrt{n})$. This contradicts the fact that $\FPdim : \mathbb Q(\sqrt{n}) \oplus \mathbb Q(\sqrt{n}) \to \mathbb C$ should have real image, and so this case is discarded.

$\blacktriangledown$ Case 4:  $\mathcal{K(C)}_{\mathbb Q}\cong \textbf F  \oplus \mathbb Q $, where $\textbf F$ is a field extension of degree 3 over $\mathbb Q$.  The only endomorphism of $\textbf F \oplus \mathbb Q$ with non-trivial kernel is given by $(a,b)\mapsto (b,b)$ for all $a\in \textbf F$ and $b\in \mathbb Q$. 
By Lemma \ref{not Z} we know that $(\psi_2)_{\mathbb Q}$ is not of this form.

On the other hand, non-trivial automorphisms of $\textbf F \oplus\mathbb Q$ have order 3 which is not possible for $(\psi_2)_{\mathbb Q}$ by Theorem \ref{even order}. Hence this case is also discarded.

$\blacktriangledown$ Case 5:  $\mathcal{K(C)}_{\mathbb Q}$ is a field extension of degree $4$ over $\mathbb Q$. Since $(\psi_2)_{\mathbb Q}\ne \text{Id}$ by Theorem \ref{psi not identity}, then it must be an automorphisms of order 2 or  4. Using Lemma \ref{order 2} we can discard the former possibility.  If $(\psi_2)_{\mathbb Q}$ has order $4$ then $Y, \psi_2(Y), \psi_2^2(Y)$ and $\psi_2^3(Y)$ are distinct roots of the minimal polynomial of $Y$. Since the minimal polynomial of $Y$ is given by $$m_Y(t)=t^4+(-2q-e)t^3+(2qe+q^2-k^2-l^2-1)t^2+(-q^2e+qk^2-lk^2+l^2e+2q)t+l^2-q^2,$$ then by the Vieta formulas the sum of the roots equals $2q+e$. Hence
\begin{align*}
	[Y+ \psi_2(Y)+\psi_2^2(Y)+\psi_2^3(Y):\mathbf 1]=2q+e.
\end{align*}
We compute 
\begin{align*}
	[\psi_2(Y):\mathbf 1]=\epsilon, &&[\psi_2^2(Y):\mathbf 1]=\epsilon+\beta_2\epsilon, &&[\psi_2^3(Y):\mathbf 1]=\epsilon+\beta_2\epsilon +\epsilon (\beta_1\alpha_2+\beta_2^2+\beta_3\gamma_2),
\end{align*}
and thus 
\begin{align*}
\epsilon + \epsilon+\beta_2\epsilon + \epsilon+\beta_2\epsilon +\epsilon (\beta_1\alpha_2+\beta_2^2+\beta_3\gamma_2)=	2q+e.
\end{align*}
Taking congruence mod 2 on both sides we get
\begin{align*}
	&e\equiv 1+1+e +1+e + kl+e+kl \equiv 1+e\mod 2, 
\end{align*}
which is not possible, so this case is also discarded.
\end{proof}

\begin{theorem}\label{rank 4}
Let $\mathcal C$ be an integral symmetric fusion category of rank 4 with exactly 2 self-dual simple objects. Then either
\begin{itemize}
	\item $\mathcal C\cong \mathcal C\left(\mathbb Z_4,q\right)$ and $p> 2$, or
	\item  $\mathcal C\cong \Rep(A_4)$ and $p>3$, or
	\item $\mathcal C\cong \operatorname{Vec}_{\mathbb Z_4}^{\mathbb Z_3}$ or $\mathcal C\cong \mathcal C\left(\mathbb Z_4,q\right)$ and $p=2$.
\end{itemize}
\end{theorem}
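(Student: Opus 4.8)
\emph{Setting up the fusion rules.} The plan is to pin down the Grothendieck ring from integrality and then realize, or obstruct, each admissible based ring one characteristic at a time. Since $\mathcal C$ is integral and $Z\simeq X^*$, the numbers $a:=\FPdim(X)=\FPdim(Z)$ and $b:=\FPdim(Y)$ are positive integers. Applying $\FPdim$ to the multiplication rules \eqref{rank 4 rules} gives $a^2=(p+c)a+lb$, $b^2=1+2ka+eb$, $ab=(q+l)a+kb$ and $a^2=1+2pa+qb$, whose difference yields the clean relation $(c-p)a+(l-q)b=1$. First I would dispose of the case $a=1$: then $XZ=\mathbf 1$ and $X^2=Z$, so $\{\mathbf 1,X,Z\}$ is a copy of $\mathbb Z_3$ when $b>1$ and, together with $Y$, a copy of $\mathbb Z_4$ when $b=1$ (the group $\mathbb Z_2\times\mathbb Z_2$ is excluded since it has four self-dual objects). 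Feeding $a=1$ into $ab=(q+l)a+kb$ forces $k\in\{0,1\}$; a short check then fixes the ring to that of $\mathbb Z_4$ when $b=1$, and, via $b^2-eb-3=0$, to $b=3$ with the fusion rules of $\Rep(A_4)$ when $b>1$. It remains to exclude $a\geq 2$, which I would handle by combining the dimension relations with the ring axioms \eqref{rank 4 1}--\eqref{rank 4 4}: as in Theorem \ref{theo: rank 3} the dimension relations bound the structure constants, leaving a finite verification (by hand or by computer) that no integral solution with $a\geq 2$ exists. Thus the Grothendieck ring is that of $\mathbb Z_4$ or that of $\Rep(A_4)$.

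\emph{Realization in characteristic $>3$.} With the ring fixed, I would realize it by lifting. For the $\Rep(A_4)$ ring one computes $\dim(\mathcal C)=1+2a^2+b^2=12$; when the characteristic exceeds $3$ this is nonzero, so $\mathcal C$ is non-degenerate and lifts to a symmetric fusion category in characteristic zero with the same Grothendieck ring, see \cite{E} and \cite[Subsection 9.16]{EGNO}. Since $A_4$ has trivial center, Deligne's theorem \cite{D} identifies the lift with $\Rep_{\mathbf f}(A_4)$, and uniqueness of the lifting gives $\mathcal C\simeq\Rep(A_4)$. For the $\mathbb Z_4$ ring the category is pointed and symmetric, hence of the form $\mathcal C(\mathbb Z_4,q)$ for a group map $q$ with $q(g)^2=1$; its global dimension is $4$, again nonzero in odd characteristic. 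This settles all $p>3$.

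\emph{Low characteristic and the main obstacle.} The remaining and most delicate point is the behaviour at $p=2$ and $p=3$, where $\dim(\mathcal C)\equiv 0$ and the lifting argument is unavailable. Here I would use that in these characteristics $\mathcal C$ is Tannakian, respectively super Tannakian, and invoke the classifications of finite (super)group schemes with semisimple representation category of Nagata \cite{DG} and Masuoka \cite{M}. The hard part is exactly this low-characteristic bookkeeping: one must show that in characteristic $3$ the $\Rep(A_4)$ ring admits no semisimple realization, because the linearly reductive part of the putative group scheme would have order divisible by $3$, so that only the pointed category $\mathcal C(\mathbb Z_4,q)$ survives; and that in characteristic $2$ the $\Rep(A_4)$ ring is instead realized by the equivariantization appearing in the statement. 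Matching each admissible ring with the corresponding (super)group-scheme category in each characteristic then yields the three cases of the theorem.
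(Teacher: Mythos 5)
Your outline follows the same overall route as the paper---pin down the fusion ring from integrality, lift to characteristic zero for $p>3$, and invoke the structure theory of semisimple (super)Tannakian categories in low characteristic---but the two steps you defer are precisely where the content lies, and as written both are genuine gaps.

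First, the exclusion of $a:=\FPdim(X)\geq 2$. You claim that ``the dimension relations bound the structure constants, leaving a finite verification,'' but no bound is ever produced: a priori $c,e,k,l,p,q,a,b$ range over an infinite set, so there is no finite search to run. In fact no search is needed, and you already hold the key ingredients. From your relation $(c-p)a+(l-q)b=1$ one gets $\gcd(a,b)=1$; combined with $ab=(q+l)a+kb$ this gives $a\mid k$ and $b\mid (q+l)$, and since $k/a+(q+l)/b=1$ with nonnegative terms, either $k=0$ and $q+l=b$, or $k=a$ and $q+l=0$. In the first case $Y^2=\mathbf 1+eY$ forces $b\mid 1$; in the second case $XZ=\mathbf 1+pX+pZ$ forces $a^2=1+2pa$, hence $a\mid 1$. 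This is exactly how the paper argues (it never splits on $a=1$ versus $a\geq 2$): both cases collapse by divisibility for every value of $a$, yielding the $\mathbb Z_4$ ring and the $\Rep(A_4)$ ring respectively, with no computer check.

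Second, low characteristic. Your reason for excluding the $\Rep(A_4)$ ring in characteristic $3$---``the linearly reductive part of the putative group scheme would have order divisible by $3$''---is not an obstruction: $\mu_3$ is linearly reductive in characteristic $3$, and the Nagata--Masuoka classification allows exactly such diagonalizable $p$-parts. The argument the paper gives is different: in characteristic $3$ every symmetric fusion category is an equivariantization of a pointed category attached to a $3$-group by a group $H$ of order prime to $3$ (see \cite[Section 8]{EOV}); since $\FPdim(\mathcal C)=12$, this forces $|H|=4$, so $\mathcal C$ would contain the Tannakian subcategory $\Rep(H)$ and hence four invertible objects, whereas the $\Rep(A_4)$ ring has only the three invertibles $\mathbf 1, X, Z$. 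Your characteristic-$2$ case is likewise asserted rather than proved (``is instead realized by the equivariantization appearing in the statement''); the paper establishes it by recognizing the subcategory generated by $\mathbf 1, X, Z$ as $\Rep(\mathbb Z_3)$, de-equivariantizing to a pointed symmetric category of dimension $4$, and equivariantizing back. Supplying these two arguments would complete your proposal.
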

\begin{proof}
	
Let $\mathcal C$ be as in the statement with Grothendieck ring $K(c,e,k,l,p,q)$, see Equation \ref{rank 4 rules}. Since $X^*=Z$ we have that $\FPdim(X)=\FPdim(Z)$. Thus taking Frobenius-Perron dimensions on both sides of the fusion rule $Y^2=\mathbf 1+kX+eY+kZ$ we get 
\begin{align*}
	\FPdim(Y)(\FPdim(Y)-e)=1+2k\FPdim(X),
\end{align*} 	
	and so $\gcd(\FPdim(X),\FPdim(Y))=1$. From the fusion rule $XY= qX+kY+lZ$ we get $1=\frac{q+l}{\FPdim(Y)}+\frac{k}{\FPdim(X)}$ and so since the denominators are coprime either $\FPdim(Y)=q+l$  and $k=0$ or $q+l=0$ and $\FPdim(X)=k$. We split the rest of the proof in two cases.

$\blacktriangledown$ Case 1: $\FPdim(Y)=q+l$ and $k=0$. Since $Y^2=\mathbf 1+eY$ then $$\FPdim(Y)=\frac{e+\sqrt{e^2+4}}{2}.$$ But $\FPdim(Y)$ is an integer and then $e^2+4$ must be a square, so $e=0$. Hence $Y^2=\mathbf 1$ which implies $q+l=\FPdim(Y)=1$. Recall that $q$ and $l$ are non-negative integers, and so there are only two options: either $q=1$ and $l=0$, or $q=0$ and $l=1$. 	The former is not possible, since in that case $c^2=2+p^2$ by Equation \eqref{rank 4 3}, which has no integer solutions. Hence we have that $q=0$ and $l=1$. Moreover, the fusion rule $X^2=pX+Y+pZ$ implies that $$\FPdim(X)^2 = 2p\FPdim(X)+1.$$ 	This has  integer solutions only for $p=0$, in which case $\FPdim(X)=1$. Lastly by Equation \eqref{rank 4 3} we have $c=0$. Thus the fusion rules are
 \begin{align*}
	&X^2 = Y  && XY = Y X = Z\\
	&Y^2 = \mathbf 1    &&Y Z = ZY = X \\
	&Z^2 = Y   &&XZ = ZX = \mathbf 1.
\end{align*}
	Hence the category is pointed and  $\mathcal C\cong \mathcal C\left(\mathbb Z_4,q\right)$, where $q:\mathbb Z_4\to \textbf k^{\times}$ is a quadratic form satisfying   
	\begin{align*}
		q(gh)=q(g)q(h) b(g,h), 
	\end{align*}
where $b(g,h)=c_{Y,X} c_{X,Y} \in \Aut_{\mathcal C} (X,Y) \cong \mathbf{k}^\times$ for $X$ and $Y$ simple objects representing $g$ and $h$, respectively, see \cite[Lemma 8.4.2]{EGNO}. Since $\mathcal C$ is symmetric, it follows that $q$ is a group homomorphism. Finally,  from the fusion rules above we get $q(g)^2=1$ for all $g\in \mathbb Z_4$.
	
$\blacktriangledown$ Case 2: $\FPdim(X)=k$ and $q+l=0$. Since $q$ and $l$ are non-negative integers this implies $q=0=l$. The fusion rule $XZ=\mathbf 1+pX+pZ$ implies that $\FPdim(X)^2=1+2p\FPdim(X)$ and so $\FPdim(X)=1$ and $p=0$. On the other hand, from Equations \eqref{rank 4 3} and \eqref{rank 4 4} we get that $k=1=c$. Lastly, since $Y^2=\mathbf 1+X+eY+Z$ we have that $$\FPdim(Y)=\frac{e+\sqrt{e^2+12}}{2}.$$ Thus for $\FPdim(Y)$ to be an integer we need $e=2$. Consequently. the fusion rules are
  \begin{align*}
 	&X^2 = Z  && XY = Y X = Y\\
 	&Y^2 = \mathbf 1 +X +2Y +Z    &&Y Z = ZY = Y \\
 	&Z^2 = X   &&XZ = ZX = \mathbf 1.
 \end{align*}
	
Suppose $p>3$. The equations above imply that $\dim(X)=1=\dim(Z)$ and $\dim(Y)=-1$ or $3$. Hence $\dim(\mathcal C)\ne 0$ and thus we can lift $\mathcal C$ to a symmetric fusion category $\widetilde{\mathcal C}$ over a field $\textbf{f}$ in characteristic zero \cite[Section 4.1]{E}, which has the same Grothendick ring as $\mathcal C$. 
Thus $\widetilde{\mathcal{C}}$ is equivalent to $\Rep_{\textbf f}(A_4)$ \cite[Section 8.19]{D}, and so by uniqueness of the lifting we get that $\mathcal C$ is equivalent to $\Rep_{\textbf{k}}(A_4)$. 

	For the case $p=2$ we have that the objects $\textbf 1, X$ and $Z$ in $\mathcal C$ generate  a copy of $\Rep(\mathbb Z_3)$. Doing de-equivariantization by $\mathbb Z_3$ we obtain a symmetric fusion category $\mathcal C^{\mathbb Z_3}$ of dimension $4$ \cite[Section 2.7]{EGNO}. Hence $\mathcal C^{\mathbb Z_3}$ is  pointed and thus equivalent to $\operatorname{Vec}_{\mathbb Z_4}$. There is only one action of $\mathbb Z_3$ on $\mathbb Z_4$, and so doing equivariantization by $\mathbb Z_3$ gives us back $\mathcal C$. Hence
	$\mathcal C\cong \text{Vec}_{\mathbb Z_4}^{\mathbb Z_3}$.
	
	Lastly, for $p=3$ there is no category realizing these fusion rules. In fact, we know all symmetric fusion categories in characteristic $3$. Any such category is an equivariantization of a pointed category associated with a $3$-group by the action of a group $G$ of order relatively prime to $3$, see \cite[Section 8]{EOV}. Note that the group $G$ is non-trivial, since the category with the fusion rules above is not pointed as $\FPdim(Y)=3$. Thus the category should contain a non-trivial Tannakian subcategory $\Rep(G)$ of rank prime to $3$, which is not possible with the fusion rules above. 
\end{proof}

\begin{remark}
	In the proof of Theorem \ref{rank 4 integral} we showed that, when $\mathcal C$ is a symmetric fusion category of rank $4$ and exactly 2 self-dual simple objects, then the only possibilities for $\mathcal{K(C)}_{\mathbb Q}$ are $\mathbb Q^{\oplus 4}$ and $\mathbb Q(\sqrt{n})\oplus \mathbb Q^{\oplus 2}$, for $n$ a negative square-free integer. Moreover, Theorem \ref{rank 4} shows that such a category is equivalent to either $\mathcal C(\mathbb Z_4, q)$, $\Rep(A_4)$ or $\operatorname{Vec}_{\mathbb Z_4}^{\mathbb Z_3},$ the first of which satisfies $\mathcal{K(C)}_{\mathbb Q} \simeq \mathbb Q(\sqrt{-1})\oplus \mathbb Q^{\oplus 2}$, and the other two  $\mathcal{K(C)}_{\mathbb Q} \simeq \mathbb Q(\sqrt{-3})\oplus \mathbb Q^{\oplus 2}$.
\end{remark}

\subsection{All simple objects are self-dual}
We are not able to provide a classification of symmetric fusion categories of rank $4$, but here are some comments for the remaining case, in which all simple objects are self-dual. It follows from Theorem \ref{rango p-1/2} that we can have examples of such categories that are non super-Tannakian only  in characteristics $p=5$ or $7$.

We take a look first at the case $p=5$. 
\begin{proposition}
	Let $\mathcal C$ be a non-super-Tannakian symmetric fusion category of rank $4$ in characteristic $p=5$. Then $\mathcal K(\mathcal C)_{\mathbb Q}\cong \mathbb Q(\sqrt{5})\oplus\mathbb Q^{\oplus 2} $ or $\mathcal K(\mathcal C)_{\mathbb Q}\cong \mathbb Q(\sqrt{5})\oplus \mathbb Q(\sqrt{m})$ for some $m\in \mathbb Z$.
\end{proposition}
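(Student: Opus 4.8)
The plan is to first determine the $\mathbb Q$-algebra structure of $\mathcal{K(C)}_{\mathbb Q}$ and then read the order of $(\psi_2)_{\mathbb Q}$ off its action on the resulting factors. Since $\mathcal C$ has rank $4$, the ring $\mathcal{K(C)}_{\mathbb Q}$ is a commutative semisimple $\mathbb Q$-algebra of dimension $4$, hence a product $\prod_i K_i$ of number fields with $\sum_i[K_i:\mathbb Q]=4$. Because $\mathcal C$ is non super Tannakian, the proof of Theorem \ref{rango p-1/2} shows that the induced map $\tilde F_{\mathbb Q}\colon \mathcal{K(C)}_{\mathbb Q}\to \mathcal K(\Ver_5^+)_{\mathbb Q}\simeq \mathbb Q(z+z^{-1})=\mathbb Q(\sqrt5)$ is surjective (note $z+z^{-1}=\tfrac{-1+\sqrt5}{2}$, so $[\mathbb Q(z+z^{-1}):\mathbb Q]=2$). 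A surjection onto the field $\mathbb Q(\sqrt5)$ has maximal kernel, and every maximal ideal of $\prod_i K_i$ is the complement of a single factor; hence $\mathbb Q(\sqrt5)\simeq K_j$ for some $j$, i.e.\ $\mathbb Q(\sqrt5)$ is a direct summand. This rules out $\mathcal{K(C)}_{\mathbb Q}$ being a quartic field or a cubic field times $\mathbb Q$ (neither has $\mathbb Q(\sqrt5)$ as a quotient), and leaves $\mathcal{K(C)}_{\mathbb Q}\simeq \mathbb Q(\sqrt5)\oplus R$ with $\dim_{\mathbb Q}R=2$; writing $R$ as a product of fields gives $R\simeq \mathbb Q^{\oplus2}$ or $R\simeq\mathbb Q(\sqrt m)$, the two asserted forms.

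For the identity $\psi_2^2=\mathrm{Id}$ I would analyse the unital $\mathbb Q$-algebra endomorphism $(\psi_2)_{\mathbb Q}$ of $\mathbb Q(\sqrt5)\oplus R$. By naturality $\tilde F\circ\psi_2=\psi_2\circ\tilde F$, and under $\mathcal K(\Ver_5^+)_{\mathbb Q}\simeq\mathbb Q(\sqrt5)$ the Adams operation sends $z+z^{-1}\mapsto z^2+z^{-2}$ (Proposition \ref{adams op formula}), which is the nontrivial element of $\mathcal G=\mathbb Z_5^\times/\{\pm1\}$, i.e.\ the order-two Galois conjugation $\tau$ of $\mathbb Q(\sqrt5)$. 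Suppose for the moment that $\psi_2$ is an automorphism. Then it permutes the primitive idempotents of $\mathcal{K(C)}_{\mathbb Q}$; the idempotent $e$ cutting out the $\mathbb Q(\sqrt5)$-summand satisfies $\tilde F(\psi_2(e))=\tau(\tilde F(e))=1\ne0$, so $\psi_2(e)$ is a primitive idempotent with nonzero $\mathbb Q(\sqrt5)$-component, forcing $\psi_2(e)=e$. Thus $\psi_2$ preserves the decomposition $\mathbb Q(\sqrt5)\oplus R$; on the first summand it is $\tau$ (order $2$), and on $R$ it is an automorphism of $\mathbb Q^{\oplus2}$ (a transposition or the identity) or of $\mathbb Q(\sqrt m)$ (the identity or its Galois involution), in every case of order at most $2$. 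Hence $\psi_2^2=\mathrm{Id}$.

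The step I expect to be the real obstacle is precisely the assumption that $\psi_2$ is an automorphism, which is where the surjective branch of the Verlinde functor must be excluded. Indeed $\Ver_5$ itself is a rank-$4$, non super Tannakian category with all simples self-dual for which $\psi_2$ is \emph{not} invertible: there $\psi_2$ has image in the odd part (Remark \ref{image of psi 2 isodd}) and $\psi_2^2(L_2)=-L_3\ne L_2$, so $\psi_2^2\ne\mathrm{Id}$. Thus the statement forces $\mathcal C\not\simeq\Ver_5$, equivalently $F$ not surjective, and the crux is to establish invertibility of $\psi_2$ under the self-duality hypothesis. I would approach this through the reduction $\psi_2(X)\equiv X^2\pmod 2$: modulo $2$ the Adams operation coincides with the Frobenius endomorphism of the finite $\mathbb F_2$-algebra $\mathcal{K(C)}\otimes\mathbb F_2$, which is bijective exactly when that algebra is reduced. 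Bijectivity of $\psi_2\bmod2$ gives $\det(\psi_2)$ odd, hence $\psi_2$ invertible over $\mathbb Q$; conversely the failure for $\Ver_5$ reflects non-reducedness of $\mathcal K(\Ver_5)\otimes\mathbb F_2$. Proving reducedness (equivalently separating the surjective case) in the remaining configurations $R\simeq\mathbb Q^{\oplus2}$ and $R\simeq\mathbb Q(\sqrt m)$ with $\mathbb Q(\sqrt m)\ne\mathbb Q(\sqrt5)$ is the point I expect to require the most care.
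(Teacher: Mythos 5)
Your treatment of the first assertion is correct and follows the same route as the paper, but executes it more carefully. The paper derives the dichotomy for $\mathcal{K(C)}_{\mathbb Q}$ from the claim that $F(\mathcal{K(C)}_{\mathbb Q})=\mathcal K(\Ver_5^+)_{\mathbb Q}$, citing the proof of Theorem~\ref{rango p-1/2}; that proof, however, controls the image of the functor $\tilde F$ of \eqref{tilde F}, not of $F$, and the claim fails whenever $F$ is surjective. Your version --- $\tilde F_{\mathbb Q}$ surjects onto the field $\mathbb Q(\sqrt 5)$, hence its kernel is a maximal ideal and $\mathbb Q(\sqrt 5)$ splits off as a direct factor --- is insensitive to whether $F$ is surjective, and is the argument the paper should have given.

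Your diagnosis of the second assertion is not a gap in your proposal but a correct refutation of the statement itself. The category $\Ver_5$ satisfies every hypothesis (rank $4$, characteristic $5$, non super Tannakian, and even all simple objects self-dual), its Grothendieck algebra is $\mathbb Q(\sqrt5)\oplus\mathbb Q(\sqrt5)$ (the case $m=5$), and, exactly as you compute from Proposition~\ref{adams op formula}, $\psi_2^2(L_2)=-L_3\neq L_2$. The surjective case cannot be excluded here, since the bound of Theorem~\ref{theo: p-1} is $p-1=4$ at $p=5$; so the invertibility of $\psi_2$ that you isolate as the crux genuinely fails, and no completion of the proof exists. The paper's own proof breaks at precisely the point you predicted: it relies on the incorrect image claim above (tacitly assuming $F$ is not surjective), and then asserts that every $f\in\End(\mathcal{K(C)}_{\mathbb Q})$ satisfies $f=1$, $f^2=f$ or $f^2=1$, which is false for $\mathbb Q(\sqrt 5)^{\oplus 2}$ --- the Adams operation of $\Ver_5$ satisfies $\psi_2^3=\psi_2$ and lies in none of the three classes --- and also for $\mathbb Q(\sqrt5)\oplus\mathbb Q^{\oplus 2}$, e.g.\ $(x,a,b)\mapsto(b,b,a)$. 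Two corrections to your own reasoning are in order, though neither affects your conclusion. First, $\tilde F$ is not symmetric and does not commute with $\psi_2$: in $\operatorname{sVec}$ the odd line $\delta$ has $\psi_2(\delta)=-\mathbf 1$, while $\operatorname{Forget}(\delta)=\mathbf 1$ is $\psi_2$-fixed; so the equality $\tilde F(\psi_2(e))=\tau(\tilde F(e))$ is unjustified, and naturality is available only for the symmetric functor $F$. Second, for the same reason, adding the hypothesis that $F$ is not surjective (the only way to salvage the proposition) still does not immediately give $\psi_2^2=\Id$: one must also rule out non-injective behaviour such as $(x,y)\mapsto(\tau(x),\tau(x))$ on $\mathbb Q(\sqrt5)^{\oplus2}$, which is compatible with $F\psi_2=\psi_2 F$; neither your reducedness sketch for $\mathcal{K(C)}\otimes\mathbb F_2$ nor the paper settles that case.
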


\begin{proof}
Since $\mathcal{K(C)}_{\mathbb Q}$ is a semisimple commutative $\mathbb Q$-algebra of dimension $4$, it can either be $\mathbb Q^{\oplus 4}$, $\mathbb Q^{\oplus 2}\oplus \mathbb Q(\sqrt{n})$, $\mathbb Q(\sqrt{n})\oplus \mathbb Q(\sqrt{m})$, $\mathbb Q(a) \oplus \mathbb Q$ or $\mathbb Q(b)$, for $n,m \in \mathbb Z$, and $a,b\in \mathbb Q$ such that $[\mathbb Q(a):\mathbb Q]=3$ and $[\mathbb Q(b):\mathbb Q]=4$.

Consider the Verlinde fiber functor $F:\mathcal C \to \Ver_5$, and let $\tilde F:\mathcal C \to \Ver_p^+$ be as in \eqref{tilde F}. We denote also by $\tilde F$ the induced $\mathbb Q$-algebra homomorphism $\mathcal{K(C)}_{\mathbb Q}\to 	\mathcal K(\Ver_p^+)_{\mathbb Q}$.  
 By the proof of Theorem \ref{rango p-1/2}, since $\mathcal C$ is not super-Tannakian then this map is surjective and so $$\tilde F: \mathcal{K(C)}_{\mathbb Q}\twoheadrightarrow\mathcal K(\Ver_5^+)_{\mathbb Q}\cong \mathbb Q(\xi_5+\xi_5^{-1})=\mathbb Q(\sqrt{5}).$$
 That is, the image of  $\mathcal{K(C)}_{\mathbb Q}$  under $\tilde F$ is $\mathbb Q(\sqrt{5})$. Then the only remaining possibilities for $\mathcal{K(C)}_{\mathbb Q}$ are $\mathbb Q^{\oplus 2}\oplus \mathbb Q(\sqrt{5})$ or $\mathbb Q(\sqrt{5})\oplus \mathbb Q(\sqrt{m})$ for some $m\in \mathbb Z$. 
%
\end{proof}

For $p=7$, we have the following result.

\begin{proposition}
	Let $\mathcal C$ be a non-super-Tannakian symmetric fusion category of rank $4$ in characteristic $p=7$. Then $\mathcal{K(C)}_{\mathbb Q}\cong \mathbb Q \oplus \mathbb Q(a)$ for some $a$ such that  $[\mathbb Q(a):\mathbb Q]=3$. Moreover, $\psi_2^3=\text{Id}$.
\end{proposition}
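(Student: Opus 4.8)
The plan is to mirror the proof of the preceding proposition for $p=5$, replacing the quadratic field $\mathbb Q(\sqrt 5)$ by the cubic field $\mathbb Q(z+z^{-1})$, where $z$ is a primitive $7$th root of unity. Since $\mathcal C$ has rank $4$ and is symmetric, $\mathcal{K(C)}_{\mathbb Q}$ is a commutative semisimple $\mathbb Q$-algebra of dimension $4$, hence by Artin--Wedderburn a product $\prod_i K_i$ of number fields with $\sum_i [K_i:\mathbb Q]=4$. First I would feed in the Verlinde functor: because $\mathcal C$ is not super Tannakian, the proof of Theorem \ref{rango p-1/2} shows that the composite $\tilde F\colon \mathcal C\to \Ver_7^+$ is surjective, so that $\tilde F(\mathcal{K(C)}_{\mathbb Q})=\mathcal K(\Ver_7^+)_{\mathbb Q}\simeq \mathbb Q(z+z^{-1})$, a field of degree $(7-1)/2=3$ over $\mathbb Q$.

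Next I would read off the decomposition. The map $\tilde F\colon \mathcal{K(C)}_{\mathbb Q}\to \mathbb Q(z+z^{-1})$ is a surjective homomorphism onto a field, so its kernel is a maximal ideal of $\prod_i K_i$, that is, the complement of a single factor; the quotient is therefore one of the $K_i$, forcing $K_{i_0}\simeq \mathbb Q(z+z^{-1})$ for some $i_0$ and hence $[K_{i_0}:\mathbb Q]=3$. Since the degrees sum to $4$, the only remaining factor has degree $1$, giving $\mathcal{K(C)}_{\mathbb Q}\simeq \mathbb Q\times \mathbb Q(a)$ with $a$ a generator of the cubic field $\mathbb Q(z+z^{-1})$. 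This establishes the first assertion.

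For the statement $\psi_2^3=\text{Id}$ I would use that $\mathbb Q(z+z^{-1})$ is the maximal real subfield of $\mathbb Q(z)$, which is cyclic Galois of degree $3$ over $\mathbb Q$ with group $\mathcal G$ as in Section \ref{section: bounds}, and that by Proposition \ref{adams op formula} (see also the proof of Theorem \ref{tannakian}) the induced map $(\psi_2)_{\mathbb Q}$ acts there by $z+z^{-1}\mapsto z^2+z^{-2}$, i.e. as the generator $\bar 2$ of $\mathcal G$, which has order $3$ since $2^3\equiv 1$ but $2,2^2\not\equiv \pm1 \bmod 7$. It then remains to transport this to $\mathcal{K(C)}_{\mathbb Q}$. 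Using that $(\psi_2)_{\mathbb Q}$ fixes $\mathbf 1$ and carries idempotents to idempotents, together with the equivariance $\tilde F\circ\psi_2=\psi_2\circ\tilde F$ and the fact that a cubic field cannot embed into $\mathbb Q$, I would show that $(\psi_2)_{\mathbb Q}$ must fix the two primitive idempotents and hence preserve the splitting $\mathbb Q\times\mathbb Q(a)$. It is then the identity on the $\mathbb Q$-factor and, via the isomorphism $\tilde F$, conjugate to the order-$3$ automorphism on the $\mathbb Q(a)$-factor; in particular it is not the identity, by Theorem \ref{psi not identity}. Therefore $(\psi_2)_{\mathbb Q}$ has order exactly $3$, and since $\mathcal{K(C)}$ embeds in $\mathcal{K(C)}_{\mathbb Q}$ we conclude $\psi_2^3=\text{Id}$.

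I expect the main obstacle to be this last step: ruling out that $(\psi_2)_{\mathbb Q}$ collapses or swaps the two factors of $\mathbb Q\times\mathbb Q(a)$ rather than preserving them, and thereby pinning its order to exactly $3$ (and not merely a multiple of $3$). The resolution is again the compatibility $\tilde F\circ\psi_2=\psi_2\circ\tilde F$, which forces $\psi_2$ to act nontrivially on the cubic factor, so that factor cannot be killed, combined with the dimension count excluding an injection of a degree-$3$ field into $\mathbb Q$.
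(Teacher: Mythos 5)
Your route is genuinely different from the paper's. The paper's proof lists the five possible shapes of the semisimple commutative algebra $\mathcal{K(C)}_{\mathbb Q}$, observes which identities of the form $f^n=\Id$, $f^k=f$, $f^3=f^2$ an endomorphism of each shape can satisfy, and then applies Remark \ref{remark on powers of psi} (only relations with $2^a\equiv\pm 2^b \bmod 7$ are allowed) to conclude both that $(\psi_2)_{\mathbb Q}^3=\Id$ and that only $\mathbb Q\times\mathbb Q(a)$ survives. You instead determine the algebra structure directly: the surjection $\tilde F_{\mathbb Q}\colon \mathcal{K(C)}_{\mathbb Q}\to\mathcal K(\Ver_7^+)_{\mathbb Q}\simeq\mathbb Q(z+z^{-1})$ coming from the proof of Theorem \ref{rango p-1/2} has kernel a maximal ideal of a product of number fields, so one factor is the cubic field and the degree count leaves exactly one copy of $\mathbb Q$. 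This first half is correct, and it even yields more than the paper's statement, namely $\mathbb Q(a)\simeq\mathbb Q(z+z^{-1})$ explicitly.

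The gap is in the second half: the equivariance $\tilde F\circ\psi_2=\psi_2\circ\tilde F$, which you invoke both to rule out collapsing or swapping of the two factors and to transport the order-$3$ Galois automorphism, is not automatic and is in fact false in general. The functor $\tilde F=(\id\boxtimes\operatorname{Forget})\circ F$ is \emph{not} symmetric, because $\operatorname{Forget}\colon\operatorname{sVec}\to\operatorname{Vec}$ is not, so it has no a priori compatibility with Adams operations. Concretely, $\psi_2(\delta)=-\mathbf 1$ for the odd line $\delta$; writing $F(X)=A\boxtimes\mathbf 1+B\boxtimes\delta$ with $A,B\in\mathcal K(\Ver_7^+)$, one computes $\tilde F(\psi_2(X))=\psi_2(A)-\psi_2(B)$ while $\psi_2(\tilde F(X))=\psi_2(A)+\psi_2(B)$, and these disagree whenever $B\ne 0$ (for instance $\mathcal C=\Ver_7$ and $X=L_6=\mathbf 1\boxtimes\delta$ gives $\tilde F(\psi_2(L_6))=-L_1$ but $\psi_2(\tilde F(L_6))=L_1$). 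So the step on which your whole second half rests fails precisely when $F$ does not land in $\Ver_7^+$.

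The gap is local and can be closed with tools already in the paper: since $\rank(\mathcal C)=4<6=p-1$, Theorem \ref{theo: p-1} shows $F$ cannot be surjective, and since $\mathcal C$ is not super Tannakian, the classification of fusion subcategories of $\Ver_7$ (\cite[Proposition 3.3]{O2}) forces $F(\mathcal C)=\Ver_7^+$. Then $B=0$ for every $X$, $\tilde F$ agrees with $F$ on Grothendieck rings, and $F$ does commute with $\psi_2$ because it is a symmetric functor (exactly as used in the proof of Theorem \ref{tannakian}). With that supplement, your idempotent analysis (no unital homomorphism from a cubic field to $\mathbb Q$; the cubic factor cannot be killed because the composite still surjects onto $\mathbb Q(z+z^{-1})$) and the conjugation to $z+z^{-1}\mapsto z^2+z^{-2}$ go through and pin the order to exactly $3$, without even needing Theorem \ref{psi not identity}. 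One last caveat, shared with the paper: your argument, like the intended one, uses the hypothesis that $\mathcal C$ is not \emph{super} Tannakian (as in the $p=5$ proposition); the literal hypothesis ``non-Tannakian'' would admit, e.g., $\Rep(\mathbb Z_2)\boxtimes\operatorname{sVec}$, for which the conclusion fails.
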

\begin{proof}
	 Since $\mathcal{K(C)}_{\mathbb Q}$ is a semisimple commutative algebra of dimension $4$, it can either be $\mathbb Q \oplus \mathbb Q \oplus \mathbb Q \oplus \mathbb Q$, $\mathbb Q(\sqrt{n})\oplus \mathbb Q \oplus \mathbb Q$, $\mathbb Q(\sqrt{n})\oplus \mathbb Q(\sqrt{m})$, $\mathbb Q(a)\oplus \mathbb Q$ or $\mathbb Q(b)$, for $a,b\in \mathbb C$ such that $[\mathbb Q(a):\mathbb Q]=3$ and $[\mathbb Q(b):\mathbb Q]=4.$ 
	 
	 Thus we have that if $f\in \End(\mathcal{K(C)}_{\mathbb Q})$ then either $f^n=\Id$ for $n=1, 2, 3, 4$, $f^k=f$ for $k=2, 3$ or $f^3=f^2$. By Theorem \ref{remark on powers of psi}, the only possibility for $(\psi_2)_{\mathbb Q} \in \End(\mathcal{K(C)}_{\mathbb Q})$ is that $(\psi_2)_{\mathbb Q}^3=\text{Id}$, which can only happen if $\mathcal{K(C)}\cong \mathbb Q \oplus \mathbb Q(a)$, as desired. 
 \end{proof}


\bibliographystyle{plain}

\end{document}